\DeclarePairedDelimiter\abs{\lvert}{\rvert}%
\newsavebox{\@brx}
\newcommand{\llangle}[1][]{\savebox{\@brx}{\(\m@th{#1\langle}\)}%
  \mathopen{\copy\@brx\kern-0.5\wd\@brx\usebox{\@brx}}}
\newcommand{\rrangle}[1][]{\savebox{\@brx}{\(\m@th{#1\rangle}\)}%
  \mathclose{\copy\@brx\kern-0.5\wd\@brx\usebox{\@brx}}}
\date{}  
\begin{document}
%%%%%%%%%%%%%%%%%%%%%%%%%%%%%%%%%%%%%%%%%%%%%%%%%%%%%%%%%%%%%%%%%%%%%%%%
%%%%%%%%%%%%%%%%%%%%%%%%%%     Macros      %%%%%%%%%%%%%%%%%%%%%%%%%%%%%
%%%%%%%%%%%%%%%%%%%%%%%%%%%%%%%%%%%%%%%%%%%%%%%%%%%%%%%%%%%%%%%%%%%%%%%%
\def\e#1\e{\begin{equation}#1\end{equation}}
\def\ea#1\ea{\begin{align}#1\end{align}}
\def\eq#1{{\rm(\ref{#1})}}
\theoremstyle{plain}% default
\newtheorem{thm}{Theorem}[section]
\newtheorem{lem}[thm]{Lemma}
\newtheorem{prop}[thm]{Proposition}
\newtheorem{cor}[thm]{Corollary}
\theoremstyle{definition}
\newtheorem{dfn}[thm]{Definition}
\newtheorem{ex}[thm]{Example}
\newtheorem{rem}[thm]{Remark}
\newtheorem{conjecture}[thm]{Conjecture}
\newtheorem{convention}[thm]{Convention}
\newtheorem{assumption}[thm]{Assumption}
\newtheorem{notation}[thm]{Notation}

\newcommand{\D}{\mathrm{d}}
\newcommand{\A}{\mathcal{A}}
\newcommand{\LL}{\llangle[\Big]}
\newcommand{\RR}{\rrangle[\Big]}
\newcommand{\LD}{\Big\langle}
\newcommand{\RD}{\Big\rangle}
\newcommand{\F}{\mathcal{F}}
\newcommand{\HH}{\mathcal{H}}
\newcommand{\X}{\mathcal{X}}

\newcommand{\K}{\mathscr{K}}
\newcommand{\q}{\mathbf{q}}

%%%%%%%%%%%%%%%%%%%%%%%%%%%%
\newcommand{\op}{\operatorname}
\newcommand{\C}{\mathbb{C}}
\newcommand{\N}{\mathbb{N}}
\newcommand{\R}{\mathbb{R}}
\newcommand{\Q}{\mathbb{Q}}
\newcommand{\Z}{\mathbb{Z}}
\renewcommand{\H}{\mathbf{H}}
\newcommand{\PP}{\mathbb{P}}

\newcommand{\Etau}{\text{E}_\tau}
\newcommand{\E}{{\mathcal E}}
\newcommand{\G}{\mathbf{G}}
\newcommand{\eps}{\epsilon}

\newcommand{\im}{\op{im}}

\newcommand{\h}{\mathbf{h}}

\newcommand{\Gmax}[1]{G_{#1}}
\newcommand{\AW}{E}
%%%%%%%%%%%%%%%%%%%%%%         Functions         %%%%%%%%%%%%%%%%%%%%%%%%%%%%%%%%%%%
%\providecommand{\abs}[1]{\left\lvert#1\right\rvert}
%\providecommand{\norm}[1]{\left\lVert#1\right\rVert}
\newcommand{\abracket}[1]{\left\langle#1\right\rangle}
\newcommand{\bbracket}[1]{\left[#1\right]}
\newcommand{\fbracket}[1]{\left\{#1\right\}}
\newcommand{\bracket}[1]{\left(#1\right)}
\newcommand{\ket}[1]{|#1\rangle}
\newcommand{\bra}[1]{\langle#1|}

\newcommand{\ora}[1]{\overrightarrow#1}

\providecommand{\from}{\leftarrow}
\providecommand{\to}{\rightarrow}
\newcommand{\bl}{\textbf}
\newcommand{\mbf}{\mathbf}
\newcommand{\mbb}{\mathbb}
\newcommand{\mf}{\mathfrak}
\newcommand{\mc}{\mathcal}
\newcommand{\cinfty}{C^{\infty}}
\newcommand{\pa}{\partial}
\newcommand{\prm}{\prime}
\newcommand{\dbar}{\bar\pa}
\newcommand{\OO}{{\mathcal O}}
\newcommand{\hotimes}{\hat\otimes}
\newcommand{\BV}{Batalin-Vilkovisky }
\newcommand{\CE}{Chevalley-Eilenberg }
\newcommand{\suml}{\sum\limits}
\newcommand{\prodl}{\prod\limits}
\newcommand{\into}{\hookrightarrow}
\newcommand{\Ol}{\mathcal O_{loc}}
\newcommand{\mD}{{\mathcal D}}
\newcommand{\iso}{\cong}
\newcommand{\dpa}[1]{{\pa\over \pa #1}}
\newcommand{\Kahler}{K\"{a}hler }
\newcommand{\0}{\mathbf{0}}

\newcommand{\B}{\mathcal{B}}
\newcommand{\V}{\mathcal{V}}

\newcommand{\M}{\mathfrak{M}}

%%%%%%%%%%%%%%%%%%%%%%%%%%%%%
\newcommand{\DD}{\Omega^{\text{\Romannum{2}}}}

%%%%%%%%%%%%%%%%%%%%%%%%%%%

\numberwithin{equation}{section}

\newcommand{\comment}[1]{\textcolor{red}{[#1]}} %for displaying red texts
%\newcommand{\comment}[1]{} %for not displaying red texts
%\newcommand{\com}[2]{\textcolor{blue}{[#2]}} %for displaying red texts

%%%
\makeatletter
\newcommand{\subjclass}[2][2010]{%
  \let\@oldtitle\@title%
  \gdef\@title{\@oldtitle\footnotetext{#1 \emph{Mathematics Subject Classification.} #2}}%
}
\newcommand{\keywords}[1]{%
\textbf{\textit{Keywords---}} 
 {\emph{Gromov-Witten theory; Seiberg Duality conjecture; Quivers varieties.} #1}}%

\makeatother
%%%

\makeatletter
\let\orig@afterheading\@afterheading
\def\@afterheading{%
   \@afterindenttrue
  \orig@afterheading}
\makeatother

\makeatletter
\newcommand*\bigcdot{\mathpalette\bigcdot@{.5}}
\newcommand*\bigcdot@[2]{\mathbin{\vcenter{\hbox{\scalebox{#2}{$\m@th#1\bullet$}}}}}
\makeatother

%%%%%%%%%%%%%%%%%%%%%%%%%%%%%%%%%%%%%%%%%%%%%%%%%%%%%%%%%%%%%%%%%%%%%%%%
%%%%%%%%%%%%%%%%%%%%%%%    Text of paper    %%%%%%%%%%%%%%%%%%%%%%%%%%%%
%%%%%%%%%%%%%%%%%%%%%%%%%%%%%%%%%%%%%%%%%%%%%%%%%%%%%%%%%%%%%%%%%%%%%%%%
%The following title can be changed according to needs.%
%\title{\bf Seiberg Duality conjecture for star-shaped quivers and finiteness of Gromov-Witten thoery for D-type quivers}
%\author{Weiqiang He, Yingchun Zhang}
%\subjclass{14N35, 11Fxx}
\title{\bf Seiberg Duality Conjecture for Star-Shaped Quivers and Finiteness of Gromov-Witten Theory for D-Type Quivers}
\author{Weiqiang He\footnote{Department of Mathematics, Sun Yat-sen University, No.135 Xingang West Road, Haizhu District, Guangzhou 510275, P.R. China; email:  hewq25@mail.sysu.edu.cn.}\,\, and Yingchun Zhang \footnote{School of Mathematical Sciences, Shanghai Jiao Tong University, 800 Dongchuan Road, Minhang District,
Shanghai 200240, China; 
email: yczhang1992@gmail.com. Correspondence to be sent to yczhang1992@gmail.com}}
\date{\today}

\maketitle
\begin{abstract}
This work proves that the Seiberg Duality Conjecture holds for star-shaped quivers: 
the Gromov-Witten theories of mutation-related varieties are equivalent.

In particular, it is known that there are only finitely many quivers that are mutation equivalent to a $D$-type quiver. 
We prove that the Seiberg Duality Conjecture holds for all quivers that are mutation equivalent to a $D_3$-type quiver, and find the change of K\"ahler variables. 
 \end{abstract}
 \keywords{}
\setcounter{tocdepth}{2} \tableofcontents 
\section{Introduction}
The equivalence of two-dimensional (2D) quantum field theories has long been an intriguing topic in mathematical physics. There are notable examples of such equivalences, including the following two: (1)the equivalence between Gromov-Witten theories under torus-equivariant birational transformation \cite{LR, LLW, R, BG, CIJ, GW};
(2) the Landau-Ginzburg/Calabi-Yau correspondence (equivalence between Gromov-Witten theory of CY hypersurface and Fan-Jarvis-Ruan-Witten theory of a Landau-Ginzburg model) \cite{CR, CIR, CIRS}. 
Both these examples focus on a GIT quotient $[V\sslash G]$ (and its complete intersection) where the gauge group  $G$ is abelian. However, there are many essential and exciting GIT quotients where $G$ is nonabelian, such as quiver varieties and their complete intersections. This raises a natural question: is there any equivalence of gauge theories for nonabelian GIT quotients? Seiberg Duality gives a positive answer to the question.

The fascinating Seiberg Duality Conjecture asserts the equivalence of gauge theories of two quivers related by a quiver mutation.  Typically, such quivers are not simply related by a phase transition. 
While this topic is extensively investigated in the physics literature, it remains less explored in math. 
See \cite{Hori,HoriTong,benini2015cluster,gomis2016m2} for physics achievements. 

In this context, a mathematical version for Seiberg Duality Conjecture was proposed by Yongbin Ruan for the 2D Gauged Linear Sigma Model \cite{nonabelianGLSM:YR}. 
We have proved the Seiberg duality conjecture for $A$-type quivers in previous works, see \cite{donghai,zhang2021gromov}. 
In this work, we focus on star-shaped quivers and quivers that are mutation equivalent to a $D_3$-type quiver. 
This paper is self-contained and provides new insights into these classes of quivers.

\subsection{Introduction to Seiberg Duality Conjecture}
We begin by considering a quiver with a potential function $\mathbf Q=(Q_f\subset Q_0, Q_1, W)$, where $Q_0$ is the set of nodes, 
among which $Q_f$ is the set of framed nodes and $Q_0\backslash Q_f$ is the set of gauged nodes, $Q_1$ is the set of arrows, and $W$ is a potential function. 
We usually denote arrows from a node $i$ to another node $j$ by $i\rightarrow j$ where  $b_{ij}$ indicating the number of such arrows. See \cite{quiver}. 
We assume that there are no 1-cycles and no 2-cycles, and call such quivers cluster quivers following the terminology in \cite[Sec. 3.1]{benini2015cluster}. 

Decorate a quiver with potential by an integer vector $\vec v=(N_i)_{i\in Q_0}$,  one integer to a node. 
For each node $k$, define the outgoing and incoming to be
$N_f(k):=\sum_{k\rightarrow j}b_{kj}N_j$ and $N_{a}(k):=\sum_{i\rightarrow k}b_{ik}N_i$.
Then we have the input data  $(V, G, \theta)$ for a GIT quotient where $V=\oplus_{i\rightarrow j}\C^{N_i\times N_j}$, $G=\prod_{i\in Q_0\backslash Q_f}GL(N_i)$, and $\theta\in \chi(G)$. 
The quiver variety is defined to be the GIT quotient $V\sslash_\theta G$, see Definition \ref{dfn:quiver}. 
When the potential function $W$ is nontrivial, we need to consider the critical locus of the potential function denoted by $\mc Z:=\{dW=0\}\sslash_\theta G$. We call it the variety of a quiver with a potential.

Performing a quiver mutation as Definition \ref{dfn:mutation} at a gauge node $k$ (we will reserve this small letter $k$ for the node we perform a quiver mutation at), we obtain a new quiver with potential $\tilde{\mathbf Q}=(\tilde Q_f\subset \tilde Q_0, \tilde Q_1, \tilde W)$ together with the assigned integer vector $\vec v'$. 
Whenever there is a pair of opposite arrows between two nodes arising from a quiver mutation, we have to annihilate them, so $\tilde{\mathbf Q}$ is still a cluster quiver. Denote the input data of the GIT quotient by $(\tilde V, \tilde G, \tilde{\theta})$, and we can construct the critical locus $\tilde{\mc Z}=\{d\tilde W=0\}\sslash_{\tilde \theta}\tilde G$. 

We will consider Gromov-Witten theory of $\mc Z$ and $\tilde{\mc Z}$, which roughly speaking count genus-$g$ curves of some degree in the target varieties $\mc Z$ and $\tilde{\mc Z}$. 
Let $\mc F_g^{\mc Z}(\vec q)$ and $\mc F_g^{\tilde{\mc Z}}(\vec q')$ denote the generating functions of genus-$g$ Gromov-Witten invariants of $\mc Z$ and $\tilde {\mc{Z}}$ with $\vec q$ and $\vec q'$ their K\"ahler variables. 
The 2D Seiberg Duality Conjecture is stated as follows.
\begin{conjecture}[\cite{nonabelianGLSM:YR,benini2015cluster}]\label{conjecture}
The generating functions of two mutation-related varieties satisfy
\begin{equation}
    \mc F_g^{\mc Z}(\vec q)=\mc F_g^{\tilde{\mc Z}}(\vec q')\,,
\end{equation}
under the change of K\"ahler variables: 
$q_{k}'=q_k^{-1}$, and for $i\neq k$, 
\begin{itemize}
        \item  if $N_f(k)>N_a(k)$, $\frac{{e}^{\pi i(N_f(j)'-1)}q_j'}{ {e}^{\pi i(N_f(j)-1)}q_j}=({e}^{\pi i N_k'}q_k)^{[b_{kj}]_+}
        ({e}^{\pi i N_k'})^{[-b_{kj}]_+} \prod_{i\neq k}e^{\pi iN_i a_{ij}} $;
        \item if $N_f(k)=N_a(k)$, $\frac{{e}^{\pi i(N_f(j)'-1)}q_j'}{ {e}^{\pi i(N_f(j)-1)}q_j}=\left(\frac{{e}^{\pi i N_k'}q_k}{1+(-1)^{N_k'}q_k}\right)^{[b_{kj}]_+}
        \left({e}^{\pi i N_k'}(1+(-1)^{N_k'}q_k)\right)^{[-b_{kj}]_+} \prod_{i\neq k}e^{\pi iN_i a_{ij}} $.
        \item If $N_f(k)<N_a(k)$, $\frac{{e}^{(N_f(j)'-1)}q_j'}{ {e}^{(N_f(j)-1)}q_j}=({e}^{\pi i N_k'})^{[b_{kj}]_+}
        \left({e}^{\pi i(N_f(k)-N_k)}q_k\right)^{-[-b_{kj}]_+} \prod_{i\neq k}e^{\pi iN_i a_{ij}} $
    \end{itemize}
    where $a_{ij}$ denotes the number of “annihilated” 2-cycles between the nodes i and j in the quiver mutation, $[b]_+=\max\{b, 0\}$.
\end{conjecture}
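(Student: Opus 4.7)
\emph{Strategy.} The plan is to prove the conjecture one mutation at a time, and to reduce each single-mutation statement to an identity of $I$-functions (or quasimap $I$-functions) of the two critical loci $\mc{Z}$ and $\tilde{\mc{Z}}$. Since higher-genus GW invariants are assembled from small-$I$-function data via Givental's quantization together with the abelian/nonabelian correspondence for quiver GIT quotients, it suffices in principle to compare small $I$-functions and then lift to all genera. The mutation at a gauge node $k$ only changes the arrows incident to $k$ and the rank $N_k$, so the comparison is essentially local near $k$: one should present both $\mc{Z}$ and $\tilde{\mc{Z}}$ as GIT quotients fibered over the product of target spaces attached to the neighbors of $k$, with fibers a Grassmannian-type quiver variety whose dimension vector changes under the Seiberg-dual rule.

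\emph{Key Steps.} First, I would fix a star-shaped quiver and, by induction on the distance from the central node, dispose of mutations at leaf-like gauge nodes using the linear-quiver results of \cite{donghai,zhang2021gromov}, since each leg of the star is itself a linear quiver. Second, for a mutation at an interior gauge node $k$, I would write down the abelian/nonabelian $I$-function of $\mc{Z}$ and of $\tilde{\mc{Z}}$ using the GLSM presentation $(V,G,\theta,W)$ attached to the quiver data, and carry out a direct hypergeometric comparison of the two series after specializing equivariant parameters. Third, I would read off the k\"ahler change of variables: the three cases $N_f(k)>N_a(k)$, $N_f(k)=N_a(k)$, $N_f(k)<N_a(k)$ correspond to three different asymptotic regimes of the hypergeometric series, which explain the three branches of the mirror map in the conjecture, including the rational correction $1/(1+(-1)^{N_k'}q_k)$ in the balanced case. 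Fourth, I would track the annihilated $2$-cycles: each pair counted by $a_{ij}$ contributes a critical-locus factor in the Jacobian ideal of $\tilde{W}$ that trivializes a pair of opposite arrows, and this contributes exactly the factor $\prod_{i\neq k}e^{\pi i N_i a_{ij}}$ to the mirror map.

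\emph{Main Obstacle.} The hardest step will be controlling the superpotential under mutation. Both targets are defined as critical loci $\{dW=0\}\sslash G$ and $\{d\tilde{W}=0\}\sslash \tilde{G}$, and the combinatorial mutation rule on $W$ should correspond to a Kn\"orrer-type equivalence between the matrix-factorization categories on the two sides, which must be upgraded to an identification of virtual classes. This is exactly the point at which the nonabelian nature of $G$ obstructs a direct toric reduction. My plan is to combine the GLSM wall-crossing machinery with an explicit transformation of the hypergeometric $I$-series, in the same pattern that succeeded in the linear case but now applied at the central vertex of the star, where several legs contribute simultaneously to the numerator and denominator of the $I$-function. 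The balanced case $N_f(k)=N_a(k)$ should be the subtlest, since the mirror map acquires an analytic rather than monomial dependence on $q_k$; I expect this to be the case that requires the genuinely new analysis beyond what the linear-quiver papers already cover, and that also governs the finiteness statement for $D$-type quivers advertised in the abstract.
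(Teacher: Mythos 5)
The statement you are proving is labeled a \emph{Conjecture} in the paper, and the paper does not prove it: it only establishes the genus-zero, equivariant quasimap small $I$-function version for star-shaped quivers (Theorem \ref{thm:1st}) and the $D_3$ finiteness statement (Theorem \ref{thm:2nd}). Your outline correctly reproduces the spirit of the paper's method for those special cases --- localize at the mutated node, freeze the adjacent nodes, and reduce the comparison of $I$-functions to the Grassmannian fundamental building block of Theorem \ref{thm:donghaiintro} --- but it does not close the distance between those cases and the full conjecture, and two of your reduction steps contain genuine gaps.

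First, the higher-genus claim. You assert that genus-$g$ invariants are ``assembled from small-$I$-function data via Givental's quantization,'' so that comparing small $I$-functions suffices for all $g$. This is false as stated: Givental--Teleman reconstruction requires semisimplicity of the quantum cohomology and takes as input the \emph{full} genus-zero theory (the Lagrangian cone / big $J$-function), not the small $I$-function; moreover the targets here are noncompact critical loci where even the genus-zero wall-crossing (Theorem \ref{thm:wallcrossing}) is only available under semi-positivity and a good torus action. The paper deliberately restricts to genus zero and to the small $I$-function for exactly this reason. Second, the generality claim. Your induction ``on the distance from the central node'' and the hypergeometric comparison at an interior node both presuppose that (i) the two sides admit a common good torus action from frame nodes with \emph{equal numbers} of isolated fixed points, and (ii) the phases on both sides can be chosen so that the semistable loci are described by non-degeneracy conditions compatible with the mutation. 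The paper shows both can fail: Remark \ref{rem:explainNis} exhibits a rank vector for which the fixed-point counts disagree and ``our method fails in this situation,'' and Remark \ref{rem:phase} shows that the phase forced by geometry can make the k\"ahler transformation deviate from the conjectured formula. Your step four (the factor $\prod_{i\neq k}e^{\pi i N_i a_{ij}}$ from annihilated $2$-cycles) and your appeal to a Kn\"orrer-type equivalence for the potential are plausible heuristics but are nowhere reduced to a checkable identity of $I$-functions; the paper handles the potential only through an explicit abelian/nonabelian quantum Lefschetz computation (Theorem \ref{thm:abeliannonabelianlefchetz}) in concrete examples. As written, your proposal is a reasonable research program but not a proof, and it overstates what the localization-plus-building-block technique can currently deliver.
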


The two characters $\theta$ and $\tilde \theta$ are not arbitrary and 
we propose that they are related in the following way. 
\begin{conjecture}\label{conj:phase}
    Write $\theta(g)=\prod_{i\in Q_0\backslash Q_f}\det(g_i)^{\sigma_i}$ for $g\in G$ and 
    $\tilde \theta(\tilde g)=\prod_{i\in Q_0\backslash Q_f} \det(\tilde g_i)^{\tilde{\sigma}_i}$ for $\tilde g\in \tilde G$. 
    When the two phases $\sigma_i$ and $\tilde \sigma_i$ are related in the following way, 
    \begin{itemize}
        \item when $\sigma_k>0$, $\tilde \sigma_k=-\sigma_k$, $\tilde \sigma_i=\sigma_i+[b_{ki}]_+\sigma_k$ for $i\neq k$,
        \item when $\sigma_k<0$, $\tilde \sigma_k=-\sigma_k$, $\tilde \sigma_i=\sigma_i+[-b_{ki}]_+\sigma_k$ for $i\neq k$,
    \end{itemize}
    the varieties $\mc Z$ and $\tilde{\mc Z}$ satisfy the relations in Seiberg duality Conjecture \ref{conjecture}.
 \end{conjecture}
The reason why we propose such a relation between the two characters is that the Seiberg duality is a local behavior, which means when we perform a quiver mutation at a node $k$, 
only adjacent nodes and arrows are affected and the semi-stability of remaining matrices of arrows that are not adjacent to the node $k$ are not changed.  
The stability conditions $\theta$ and $\tilde\theta$ are chosen to make this work. 

We will focus on the genus-zero version of the Conjecture. 
The genus zero wall-crossing Theorem states that the (equivariant) $\mc J$-function is equal to the (equivariant) quasimap small $I$-function under mirror map, see \cite{MR3126932}\cite{MR3412343} \cite{MR3272909}. 
Hence, we will instead investigate the transformations of the equivariant quasimap small $I$-functions (equivariant small $\mc J$-function)  under quiver mutations. 

We assume that $\mc Z$ and $\tilde{\mc Z}$ admit a common good torus action S, and denote their equivariant quasimap small $I$-functions by $I^{{\mc Z,S}}(q)$ and $I^{\tilde{\mc Z}, S}(\vec q')$. 

The first goal of this work is to prove the Seiberg duality conjecture for a star-shaped quiver described in Definition \ref{defn:starshapedquiver}, for example, a quiver in Figure \ref{fig:intr_star}.
\iffalse
\begin{figure}[H]
    \centering
    \includegraphics[width=3in]{1.png}
    \caption{A star-shaped quiver with additional conditions on $N_i$ as Example \ref{ex:generalstar}}
    \label{fig:intr_star}
\end{figure}
\fi
\noindent
\begin{figure}[H]
    \centering
    \begin{tikzpicture}   
        \node[draw, circle, minimum size=0.8cm] (node-5) at (0,0){$N_5$};
        \node[draw, circle, minimum size=0.8cm] (node-3) at (-1.5,1){$N_3$};
        \node[draw, circle, minimum size=0.8cm] (node-4) at (-1.5,-1){$N_4$};
        \node[draw, circle, minimum size=0.8cm] (node-1) at (-3.5,1){$N_1$};
        \node[draw, circle, minimum size=0.8cm] (node-2) at (-3.5,-1){$N_2$};        
       \node[draw, circle, minimum size=0.8cm] (node-6) at (1.5,1){$N_6$};
        \node[draw, circle, minimum size=0.8cm] (node-7) at (1.5,-1){$N_7$};
        \node[draw, minimum width=0.8cm, minimum height=0.8cm] (node-8) at (3.5,1){$N_8$};
        \node[draw, minimum width=0.8cm, minimum height=0.8cm] (node-9) at (3.5,-1){$N_9$};
       \draw[-stealth] (node-1) -- (node-3); 
        \draw[-stealth] (node-2) -- (node-4);
        \draw[-stealth] (node-4) -- (node-5);
        \draw[-stealth] (node-3) -- (node-5);
        \draw[-stealth] (node-5) -- (node-6);
        \draw[-stealth] (node-5) -- (node-7);
        \draw[-stealth] (node-6) -- (node-8);
        \draw[-stealth] (node-7) -- (node-9);
        \node at (-3.5,0.4){$1$};
        \node at (-1.5,0.4){$3$};
        \node at (-1.5,-1.6){$4$};
        \node at (0,-0.7){$5$};
        \node at (-3.5,-1.6){$2$};
         \node at (3.5,0.4){$8$};
        \node at (1.5,0.4){$6$};
        \node at (1.5,-1.6){$7$};
        \node at (3.5,-1.6){$9$};
        %\node at (-0.7,0.8){$A_1$};
        %\node at (-0.7,-0.8){$A_2$};
        %\node at (0.7,0.8){$A_3$};
        %\node at (0.7,-0.8){$A_4$};
    \end{tikzpicture}
    \caption{A star-shaped quiver with additional conditions on $N_i$ as Example \ref{ex:generalstar}.}
    \label{fig:intr_star}
\end{figure}

In particular, the Seiberg Duality Conjecture holds for $ D,\, E$-type quivers if we view them as special star-shaped quivers.
By \cite{clusteralg:2}, $A,\,D,\,E$-type quivers  only have finitely many mutation equivalent quivers.  Let $\Omega$ denote the finite set of  quivers that are mutation equivalent to $D_3$-quiver in Figure \ref{fig:intrD4}, which are given in Section \ref{sec:D3mutequivquivers} explicitly.
\iffalse
\begin{figure}
    \centering
    \includegraphics[width=2in]{2.png}
    \caption{The numerals below nodes are used to label them. Assume $N_4=N_1+N_2,\,N_4>N_3,\,N_3>N_1,\,N_3>N_2$.}
    \label{fig:intrD4}
\end{figure}
\noindent
\fi
\begin{figure}[ht]
    \centering
    \begin{tikzpicture}   
        \node[draw, circle, minimum size=0.8cm] (node-3) at (0,0){$N_3$};
        \node[draw, circle, minimum size=0.8cm] (node-1) at (-2,1){$N_1$};
        \node[draw, circle, minimum size=0.8cm] (node-2) at (-2,-1){$N_2$};
        \node[draw, minimum width=1cm, minimum height=1cm] (node-4) at (2,0){$N_4$};
       \draw[-stealth] (node-1) -- (node-3); 
        \draw[-stealth] (node-2) -- (node-3);
        \draw[-stealth] (node-3) -- (node-4);
        \node at (-2,0.4){$1$};
        \node at (-2,-1.6){$2$};
        \node at (0,-0.6){$3$};
        \node at (2,-0.6){$4$};
        \node at (-1,0.8){$A_1$};
        \node at (-1,-0.3){$A_2$};
        %\node at (0.7,0.8){$A_3$};
        %\node at (0.7,-0.8){$A_4$};
    \end{tikzpicture}
    \caption{Assume $N_4=N_1+N_2,\,N_4>N_3,\,N_3>N_1,\,N_3>N_2$.}
    \label{fig:intrD4}
\end{figure}
\noindent
Our second goal is to investigate the relation of Gromov-Witten theories of quivers in $\Omega$. More explicitly, 
(1) we will find the transformation of the quasimap small $I$-functions including the change of K\"ahler variables of quivers in $\Omega$ under quiver mutations, 
(2) let $\mc C$ be the set of K\"ahler variables of all quivers in $\Omega$, the set $\mc C$ is finite. This is what we mean the finiteness of Gromov-Witten theory of $D_3$-quiver.
 
\subsection{Main Theorems}
\subsubsection{Seiberg Duality Conjecture for a star-shaped quiver}
Consider a star-shaped quiver first as Figure \ref{fig:intr_star}. For a chosen phase as Equation \eqref{eqn:phaseofstar}, 
we can define a quiver variety and denote it by $\mc X_s$, where the subscript $s$ is the initial of the word star. 
Performing a quiver mutation $\mu_5$ at the center node $5$, we obtain the quiver diagram in Figure \ref{fig:generalstarmc} with $N_{5}'=N_{6}+N_{7}-N_{5}$. The quiver has a potential function $\tilde W_{s}=tr(A_3B_1A_5)+tr(A_3B_3A_6)+tr(A_4B_2A_5)+tr(A_4B_4A_6)$.
\iffalse
\begin{figure}[H]
    \centering
    \includegraphics[width=3.6in]{starmc.png}
   % \caption{$N_5'=N_6+N_7-N_5$.}
  %  \label{fig:intr_starmc}
\end{figure}
\noindent
\fi
For the phase \eqref{eqn:phaseofgeneralstarmc} chosen according to the Conjecture \ref{conj:phase}, we get the critical locus of the potential function, $\tilde{\mc Z}_s:=(d\tilde W_{s}=0)\sslash_{\tilde{\theta}_{s}}\tilde G_s$. 
Both $\mc X_s$ and $\tilde{\mc Z_s}$ admit a good torus action $S=(\C^*)^{N_8+N_9}$ which acts on matrices $A_7, A_8$ naturally.
\begin{thm}\label{thm:1st}
The equivariant quasimap small $I$-function of $\mc X_s$ and that of $\tilde{\mc Z}_s$ are related as follows. 
\begin{enumerate}[(a)]
    \item When $N_6+N_7\geq N_3+N_4+2$, 
    \begin{equation}
       I^{\mc X_s,S}(\vec q)=I^{\tilde{\mc Z_s},S}(\vec q')\,,
    \end{equation}
    under the change of K\"ahler variables \begin{align}\label{eqn:intrvariablechange}
        q_5'=q_5^{-1}, \, q_6'=q_6q_5,\,q_7'=q_7q_5,\,q_i'=q_i, \text{ for } i\neq 5,6,7\,.
    \end{align}
     \item When $N_6+N_7= N_3+N_4+1$, 
    \begin{equation}
    I^{\mc X_s,S}(\vec q)=e^{(-1)^{N_5'-1}q_5}I^{{\tilde{\mc Z}_s},S}(\vec q')\,,
    \end{equation}
    under the change of K\"ahler variables in \eqref{eqn:intrvariablechange}.
    \item When $N_6+N_7= N_3+N_4$,
    \begin{equation}
    I^{\mc X_s,S}(\vec q)=(1+(-1)^{N_5'}q_5)^{\sum_{i=3,4}\sum_{A=1}^{N_i}x^{i}_A-\sum_{j=6,7}\sum_{B=1}^{N_i}x^j_B+N_5'}
    I^{{\tilde{\mc Z}_s},S}(\vec q')\,,
    \end{equation}
    under change of K\"ahler variables,
    \begin{align}
      &q_3'=q_3(1+(-1)^{N_5'}q_5)\,,
       q_4'=q_4(1+(-1)^{N_5'}q_5)\,,
       q_5'=q_5^{-1}\,,\nonumber\\
       &q_6'=\frac{q_6q_5}{(1+(-1)^{N_5'}q_5)}\,,
       q_7'=\frac{q_7q_5}{(1+(-1)^{N_5'}q_5)}\,, q_i'=q_i,\text{ for } i=1,2\,.
    \end{align}
\end{enumerate}
\end{thm}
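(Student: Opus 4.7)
The plan is to reduce Theorem \ref{thm:1st} to an identity between equivariant small quasimap $I$-functions via the genus-zero wall-crossing theorem \cite{MR3272909}, and then to prove that identity by $S$-equivariant localization followed by a combinatorial resummation at the mutated node. First, I would write explicit formulas for $I^{\mc X_s,S}(\vec q)$ and $I^{\tilde{\mc Z}_s,S}(\vec q')$ using the standard product/residue presentation for quasimap $I$-functions of GIT quotients of affine space by a reductive group \cite{MR3126932, MR3412343}. The contribution of the superpotential $\widetilde W_s$ on the $\tilde{\mc Z}_s$ side comes out of the Koszul resolution of the critical locus and is already absorbed into that formula, so the two $I$-functions sit in the same ambient equivariant cohomology after a tautological identification.

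Next I apply localization with respect to the auxiliary torus $S=(\C^*)^{N_7+N_8}$, which acts through the framed nodes 7 and 8. On both sides the $S$-fixed points are indexed by tuples of subsets of the framing weights, one subset of size $N_i$ per gauge node $i$, compatible with the incidence relations of the star. The mutation $\mu_5$ induces a natural bijection on these labels: the data at nodes $i\neq 5$ is unchanged, while the $N_5$-dimensional subset at node $5$ is replaced by its $N_5'=N_6+N_7-N_5$ dimensional complement inside the total incoming framing space at node $5$. For each pair of matched fixed points the contributions to the two $I$-functions agree factor-by-factor on every node $i\neq 5$, so the entire identity reduces to a local statement involving only the Kähler variables $q_5,q_6,q_7$ (and, in case (c), the variables $q_3,q_4$ coupled to them).

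This local identity is an equality of triple sums in degrees $(d_5,d_6,d_7)$. After the substitution $q_5\mapsto q_5^{-1}$, $q_6\mapsto q_5q_6$, $q_7\mapsto q_5q_7$, the two sums should be related by a $q$-hypergeometric transformation analogous to the one established for $A_n$ in \cite{zhang2021gromov}. Summing out $d_6$ and $d_7$ reduces the problem to a one-variable identity in $q_5$ whose nature is controlled by the sign of $N_a(5)-N_f(5)-1$: in case (a) the $d_5$-sum is essentially a terminating series and yields the plain equality of $I$-functions; in case (b) the tail is a convergent series that resums to the confluent factor $e^{(-1)^{N_5'}q_5}$; in case (c) the tail is a geometric series in $(-1)^{N_5'}q_5$ that sums to $(1+(-1)^{N_5'}q_5)^{N_5'}$ and forces the additional mirror shifts on $q_3,q_4,q_6,q_7$.

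The main obstacle will be the two boundary cases (b) and (c), where the naive degree sum fails to terminate and the fixed-point matching alone does not give the identity on the nose. Here one must work on the residue presentation of the $I$-function: an analytic deformation of the integration contour at node $5$ picks up, in addition to the geometric fixed-point residues, the asymptotic residues at infinity which produce exactly the correction factors $e^{(-1)^{N_5'}q_5}$ and $(1+(-1)^{N_5'}q_5)^{N_5'}$. Tracking the orientation signs $(-1)^{N_5'}$ coming from the dual cotangent directions on the mutated side, and verifying that the same $(1+(-1)^{N_5'}q_5)$ factor compatibly enters each of $q_3,q_4,q_6,q_7$, is the principal technical difficulty, and is where the bookkeeping from the linear case of \cite{zhang2021gromov} needs to be extended to handle the two incoming and two outgoing arms meeting at the center of the star.
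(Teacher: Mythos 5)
Your proposal follows essentially the same route as the paper: $S$-equivariant localization, the complement bijection $\iota_s$ on fixed points, isolation of the center node so that the factors at nodes $i\neq 5$ match term by term, and reduction of the remaining local identity to the rank-one Grassmannian building block, with the three cases (a)--(c) governed by $N_f(5)-N_a(5)$ exactly as you describe. The only real divergence is that the paper does not reprove the building-block identity by contour deformation: it quotes it directly (Theorem \ref{thm:Haidong}, from \cite{donghai,benini2015cluster}), so your residue-at-infinity analysis for cases (b) and (c) is available off the shelf and need not be redone.

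One point where your description is too loose to execute as written: you cannot literally ``sum out $d_6$ and $d_7$'' first to obtain a one-variable identity in $q_5$, because the node-$5$ factors couple to the degrees at nodes $6$ and $7$ through products of the form $\prod_{l\le n^5_I-n_F}(\zeta_I-\zeta_F+l)$. The paper's actual reduction fixes $\vec n^i$ for all $i\neq 5$, substitutes $n^5_I=m_I+n_I$ with $m_I\ge 0$ (this shift is precisely what produces $q_6'=q_6q_5$ and $q_7'=q_7q_5$), and then observes that the resulting $\vec m$-sum is the building-block $I$-function of $S^{\oplus(N_3+N_4)}\to Gr(N_5,N_6+N_7)$ with equivariant parameters shifted to $\zeta_A+n^i_A$ and $\zeta_F+n_F$; the building-block theorem applies for each fixed outer degree because it holds for generic equivariant parameters. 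This shifted-parameter mechanism is the nontrivial combinatorial step of the proof, and your argument should be organized around it rather than around decoupling the degree sums.
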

The above theorem can be generalized to any star-shaped quiver defined in \ref{defn:starshapedquiver} as discussed in Corollary \ref{cor:generalstar}.

\subsubsection{Seiberg Duality Conjecture for $D_{3}$ mutation equivalent quivers}
The $D_{3}$-type quiver in Figure \ref{fig:intrD4} is a special case of a star-shaped quiver with only one outgoing arrow and two incoming arrows. 
Denote the quiver variety of the $D_{3}$-type quiver by $\mc X_{0}$.  
Performing all possible quiver mutations, we get the finite set $\Omega$ of all quivers that are mutation equivalent to $D_3$, 
see Section \ref{sec:D3mutequivquivers}.
Performing quiver mutations $\mu_3\rightarrow \mu_1\rightarrow \mu_2$ repeatedly, we get almost all but five quivers in $\Omega$ displayed in Figure \ref{fig:extrafig} and Figure \ref{fig:extrafig2}. 
Notice that relations of the three quiver $(1)(2)(3)$ in Figure \ref{fig:extrafig} are similar with those of quivers in Figure \ref{fig:m1m3} and Figure\ref{fig:extrafig2}$(4)(5)$, so we will only discuss the quivers in Figure \ref{fig:m1m3} and Figure\ref{fig:extrafig2}$(4)(5)$. 

We label the nine quivers obtained by performing quiver mutations $\mu_3\rightarrow \mu_1\rightarrow \mu_2$ by $\{\mathbf Q_i\}_{i=1}^9$ and label the quivers in Figure\ref{fig:extrafig2} by $\mathbf Q_{10},\mathbf Q_{11}$. 
Note that the quiver $\mathbf Q_9$ is the same with the quiver $\mathbf Q_0$ by exchanging nodes 1 and 2. 
Denote the corresponding varieties by $\mc X_i$ $(\mc Z_i \text{ if the potential function is nontrivial})$ in the phases proposed in Conjecture \ref{conj:phase}, which are discussed in Section \ref{quivermutation}.
All these varieties admit a common torus action $R:=(\mathbb C^*)^{N_4}$. 
\begin{thm}\label{thm:2nd}
The equivariant quasimap small $I$-functions of quivers $\{\mathbf Q_i\}_{i=1}^{11}$ in $\Omega$ satisfy the following relations:
\begin{enumerate}[(1)]
    \item  
        $$I^{\mc X_0,R}(\vec q)=(1+(-1)^{N_3'}q_3)^{\sum_{I=1}^{N_1}x^1_I+\sum_{I=1}^{N_2}x^2_I-\sum_{F}^{N_3}\lambda_F+N_3'}I^{\mc Z_1,R}(\vec q'),$$
under change of K\"ahler variables
 $$ q_1'=(1+(-1)^{N_3'}q_3)q_1,\,q_2'=(1+(-1)^{N_3'}q_3)q_2,\,q_3'=q_3^{-1};$$
   \item 
    $$I^{\mc Z_1,R}(q_1,q_2,q_3)=I^{\mc Z_2,R}(q_1^{-1},q_2,q_3)\,;\,\,I^{\mc Z_2,R}(q_1,q_2,q_3)=I^{\mc Z_3,R}(q_1,q_2^{-1},q_3)\,;$$
\item $$I^{\mc X_4,R}(q_1,q_2,q_3)=
      (1+(-1)^{N_3^{\prime}}q_3)^{ \sum_{F=1}^{N_4}\lambda_F-\sum_{I=1}^{N_2}x^1_I-\sum_{I=1}^{N_1}x^2_I+N_3^{\prime}}
      I^{\mc Z_3,R}(q_1',q_2',q_3')$$
    under change of K\"ahler variables
    $$q_1'=\frac{q_3 q_1}{1+(-1)^{N_3^{\prime}}q_3}, \,
     q_2'=\frac{q_3 q_2}
    {1+(-1)^{N_3^{\prime}}q_3},\,
    q_3'=q_3^{-1}\,;$$
\item 
    $$I^{\mc X_4,R}(q_1,q_2,q_3)=I^{\mc X_5,R}(q_1^{-1},q_2,q_3q_1)\,; \,\,I^{\mc X_6,R}(q_1,q_2,q_3)=I^{\mc X_5,R}(q_1,q_2^{-1},q_3q_2)\,;$$
\item $$I^{\mathcal{X}_7, R}(q_1q_3,q_2q_3,q_3^{-1})=I^{\mathcal{X}_6, R}(q_1,q_2,q_3);$$
\item 
    $$I^{\mc X_7,R} (q_1,q_2,q_3)=I^{\mc X_8,R}(q_1^{-1},q_2,q_3q_1)\,;\,\,
    I^{\mc X_8,R}(q_1,q_2,q_3)=I^{\mc X_9,R}(q_1,q_2^{-1},q_3q_2)\,;$$
\item 
      $$I^{\mc X_{10},R} (q_1,q_2,q_3)=
       (1+(-1)^{N_3-N_1}q_3)^{\sum_{B=1}^{N_2}x^2_B-\sum_{A=1}^{N_2}x^1_A+N_3-N_1}
       I^{\mc Z_2,R}(q_1',q_2',q_3')$$ 
    under change of K\"haler variable 
    $$q_1'=\frac{q_1q_3}{1+(-1)^{N_3-N_1}q_3},q_2'=q_2(1+(-1)^{N_3-N_1}q_3),q_3'=q_3^{-1};$$
\item  $$I^{\mc X_8,R}(q_1,q_2,q_3)=I^{\mc Z_{10},R}(q_1^{-1},q_2,q_1q_3)\,;\,\,\,I^{\mc X_{11},R}(q_1,q_2,q_3)=I^{\mc Z_{10},R}(q_1,q_2^{-1},q_2q_3).$$
\end{enumerate}
\end{thm} 

\iffalse
\begin{table}[H]
    \centering
\begin{tabular}{ | m{1.2cm}|m{1cm} | m{3.8cm}| m{3.8cm}|m{1.6cm}|} 
\hline
Mutation&varieties& 1st node&2nd node&3rd node\\
  \hline
  &$\mc X_0$&$q_1$&$q_2$&$q_3$\\
  \hline
  $\mu_3$ &$\mc Z_1$ &$(1+(-1)^{N_3'}q_3)q_1$&$(1+(-1)^{N_3'}q_3)q_2$& $q_3^{-1}$\\ 
  \hline
  $\mu_1$&$\mc Z_2$&$(1+(-1)^{N_3'}q_3)^{-1}q_1^{-1}$ & $(1+(-1)^{N_3'}q_3)q_2$&$q_3^{-1}$\\
  \hline
  $\mu_2$&$\mc Z_3^+$&$((1+(-1)^{N_3'}q_3)q_2)^{-1} $& $((1+(-1)^{N_3'}q_3)q_1)^{-1}$&$q_3$ \\
  \hline
  $\mu_3$&$\mc X_4$& $q_2^{-1}$&$q_1^{-1}$&$q_3^{-1}$\\
  \hline
  $\mu_1$&$\mc X_5$& $q_2$&$q_1^{-1}$&$q_3^{-1}q_2^{-1}$\\
  \hline
  $\mu_2$&$\mc X_6$& $q_2,$&$q_1$&$q_3^{-1}q_2^{-1}q_1^{-1}$\\
  \hline
  $\mu_3$&$\mc X_7$& $q_2^{-1}$&$q_1^{-1}$&$q_1q_2q_3$\\
  \hline
  $\mu_1$ &$\mc X_8$& $(-1)^{N_3}q_1q_3$& $ (-1)^{N_3}q_2^{-1}q_3^{-1}$&$q_2$\\
  \hline
  $\mu_2$&$\mc X_9$& $(-1)^{N_3}q_1q_3$&$(-1)^{N_3}q_2q_3$&$q_3^{-1}$\\
  \hline
\end{tabular}
  \caption{K\"ahler variables' transformations.}
  \label{table:1}
\end{table}

The reason we want to track changes of K\"ahler variables is that we hope to construct a cluster algebra structure in the quantum cohomology ring of a quiver variety in future work. See  \cite{clusteralg:1,clusteralg:2,clusteralg:3, clusteralg:4,clusterpot1,clusterpot:2} for the theory of cluster algebras.
Our work on the construction of a cluster algebra structure in the quantum cohomology ring of an $A_n$-type quiver is in preparation. 
\fi
\subsection{Ideas for proofs}
\iffalse
\begin{figure}[ht]
    \centering
    \includegraphics[width=5in]{proofstargenera.png}
    \caption{From the left to the right, we perform the quiver mutation at the center node, and from top to bottom, we freeze the adjacent nodes.}
    \label{fig:idea} 
\end{figure}
\fi
\begin{figure}[H]
    \centering
    \begin{tikzpicture}   
    %Left above
        \node[draw, circle, minimum size=0.6cm] (node-5) at (0,0){$N_5$};
        \node[draw, circle, minimum size=0.6cm] (node-3) at (-1.5,1){$N_3$};
        \node[draw, circle, minimum size=0.6cm] (node-4) at (-1.5,-1){$N_4$};
        \node[] (node-1) at (-2.7,1){};
        \node[] (node-2) at (-2.7,-1){};    
       \node[draw, circle, minimum size=0.6cm] (node-6) at (1.5,1){$N_6$};
        \node[draw, circle, minimum size=0.6cm] (node-7) at (1.5,-1){$N_7$};
        \node[] (node-8) at (2.7,1){};
        \node[] (node-9) at (2.7,-1){};
       \draw[-stealth] (node-1) -- (node-3); 
        \draw[-stealth] (node-2) -- (node-4);
        \draw[-stealth] (node-4) -- (node-5);
        \draw[-stealth] (node-3) -- (node-5);
        \draw[-stealth] (node-5) -- (node-6);
        \draw[-stealth] (node-5) -- (node-7);
        \draw[-stealth] (node-6) -- (node-8);
        \draw[-stealth] (node-7) -- (node-9);
       % \node at (-3.5,0.5){$1$};
        \node at (-1.5,0.3){$3$};
        \node at (-1.5,-1.7){$4$};
        \node at (0,-0.7){$5$};
        \node at (1.5,0.3){$6$};
        \node at (1.5,-1.7){$7$};
        \node at (3.5,0) {$\xrightarrow[]{\mu_5}$};

        %right above
        \node[draw, circle, minimum size=0.6cm] (node-m5) at (7,0){$N_5'$};
        \node[draw, circle, minimum size=0.6cm] (node-m3) at (5.5,1){$N_3$};
        \node[draw, circle, minimum size=0.6cm] (node-m4) at (5.5,-1){$N_4$};
        \node[] (node-m1) at (4.3,1){};
        \node[] (node-m2) at (4.3,-1){};    
       \node[draw, circle, minimum size=0.6cm] (node-m6) at (8.5,1){$N_6$};
        \node[draw, circle, minimum size=0.6cm] (node-m7) at (8.5,-1){$N_7$};
        \node[] (node-m8) at (9.7,1){};
        \node[] (node-m9) at (9.7,-1){};
       \draw[-stealth] (node-m1) -- (node-m3); 
        \draw[-stealth] (node-m2) -- (node-m4);
        \draw[-stealth] (node-m5) -- (node-m3);
        \draw[-stealth] (node-m5) -- (node-m4);
        \draw[-stealth] (node-m6) -- (node-m5);
        \draw[-stealth] (node-m7) -- (node-m5);
        \draw[-stealth] (node-m6) -- (node-m8);
        \draw[-stealth] (node-m7) -- (node-m9);
        \draw[-stealth] (node-m3) to [bend left] (node-m6);
        \draw[-stealth] (node-m3) to [bend left] (node-m7);
        \draw[-stealth] (node-m4) to [bend right] (node-m6);
        \draw[-stealth] (node-m4) to [bend right] (node-m7);
        %%%%%%%%%%%%%
        %left below
        \node[] at (0,-2){$\downarrow $};
         \node[draw, circle, minimum size=0.6cm] (node-f5) at (0,-4){$N_5$};
        \node[draw, minimum width=0.6cm, minimum height=0.6cm] (node-f3) at (-1.5,-3){$N_3$};
        \node[draw, minimum width=0.6cm, minimum height=0.6cm] (node-f4) at (-1.5,-5){$N_4$};
        \node[] (node-f1) at (-2.7,-3){};
        \node[] (node-f2) at (-2.7,-5){};    
       \node[draw, minimum width=0.6cm, minimum height=0.6cm] (node-f6) at (1.5,-3){$N_6$};
        \node[draw, minimum width=0.6cm, minimum height=0.6cm] (node-f7) at (1.5,-5){$N_7$};
        \node[] (node-f8) at (2.7,-3){};
        \node[] (node-f9) at (2.7,-5){};
       \draw[-stealth] (node-f1) -- (node-f3); 
        \draw[-stealth] (node-f2) -- (node-f4);
        \draw[-stealth] (node-f4) -- (node-f5);
        \draw[-stealth] (node-f3) -- (node-f5);
        \draw[-stealth] (node-f5) -- (node-f6);
        \draw[-stealth] (node-f5) -- (node-f7);
        \draw[-stealth] (node-f6) -- (node-f8);
        \draw[-stealth] (node-f7) -- (node-f9);
      %  \node at (-1.5,0.5){$3$};
      %  \node at (-1.5,-1.9){$4$};
       % \node at (0,-0.7){$5$};
       % \node at (1.5,0.5){$6$};
       % \node at (1.5,-1.9){$7$};
       %right below
        \node at (3.5,-4) {$\xrightarrow[]{\mu_5}$};
        \node[] at (7,-2){$\downarrow $};
         \node[draw, circle, minimum size=0.6cm] (node-fm5) at (7,-4){$N_5'$};
       \node[draw, minimum width=0.6cm, minimum height=0.6cm] (node-fm3) at (5.5,-3){$N_3$};
        \node[draw, minimum width=0.6cm, minimum height=0.6cm] (node-fm4) at (5.5,-5){$N_4$};
        \node[] (node-fm1) at (4.3,-3){};
        \node[] (node-fm2) at (4.3,-5){};    
       \node[draw, minimum width=0.6cm, minimum height=0.6cm] (node-fm6) at (8.5,-3){$N_6$};
        \node[draw, minimum width=0.6cm, minimum height=0.6cm] (node-fm7) at (8.5,-5){$N_7$};
        \node[] (node-fm8) at (9.7,-3){};
        \node[] (node-fm9) at (9.7,-5){};
       \draw[-stealth] (node-fm1) -- (node-fm3); 
        \draw[-stealth] (node-fm2) -- (node-fm4);
        \draw[-stealth] (node-fm5) -- (node-fm3);
        \draw[-stealth] (node-fm5) -- (node-fm4);
        \draw[-stealth] (node-fm6) -- (node-fm5);
        \draw[-stealth] (node-fm7) -- (node-fm5);
        \draw[-stealth] (node-fm6) -- (node-fm8);
        \draw[-stealth] (node-fm7) -- (node-fm9);
        \draw[-stealth] (node-fm3) to [bend left] (node-fm6);
        \draw[-stealth] (node-fm3) to [bend left] (node-fm7);
        \draw[-stealth] (node-fm4) to [bend right] (node-fm6);
        \draw[-stealth] (node-fm4) to [bend right] (node-fm7);
        \end{tikzpicture}
    \caption{From the left to the right, we perform the quiver mutation at the center node, and from top to bottom, we freeze the adjacent nodes of node $5$.}
    \label{fig:idea}
\end{figure}
The key is that quiver mutation is a local behavior: it only affects the behavior of nodes and arrows around the node $k$.
The idea for proving the equivalence between $I^{\mc X_s}$ and $I^{\tilde{\mc Z_s}}$ is to freeze the nodes that are related to the center node as shown in the Figure \ref{fig:idea}.

The two quiver diagrams on the bottom of the Figure \ref{fig:idea} are the two quivers in fundamental building block in Figure \ref{fig:fundamental} with outgoing $n=N_7+N_8$ and incoming $m=N_3+N_4$.
By some nontrivial combinatorics, the equivalence of $I$-functions of $\mc X_s$ and $\tilde{\mathcal{Z}}_s$ is reduced to the fundamental building block. This is known by \cite{benini2015cluster,donghai}, so we are done.

The key idea to prove the Theorem \ref{thm:2nd} is to find correct phase for each quiver that is mutation equivalent to $D_3$ so that we can identify their $I$-functions. 
We let the phases change as proposed in Conjecture \ref{conj:phase}, and construct the corresponding varieties $\mc X_i(\mc Z_i)$ of quivers $\mathbf Q_i$. Their $I$-functions turn out to satisfy the transformation rule in Seiberg duality Conjecture. Hence, we actually have proved that the following Corollary. 
\begin{cor}
    The Conjecture \ref{conj:phase} is true for quivers that are mutation equivalent to $D_3$-quiver. 
\end{cor}
\subsection{Outline}
We will introduce quiver varieties and quiver mutations in Section \ref{sec:quiver}, including star-shaped quivers, $D_3$-type quiver, and their mutations. We will introduce the Gromov-Witten theory in Section \ref{Sec:GW} and equivariant quasimap small $I$-functions in Section \ref{sec:equivIfunction} which includes the equivariant quasimap small $I$-functions of all examples we deal with. 
We will leave all proofs of Theorem \ref{thm:1st} and Theorem \ref{thm:2nd} in Section \ref{sec:proof}.
\section{Introduction to Quiver Varieties and Quiver Mutations}\label{sec:quiver} 
In Section \ref{sec:quivervariety}, we will introduce some basic definitions for quiver varieties, including prominent examples like general star-shaped quiver and $D_3$-type quiver varieties. We refer readers to the excellent book \cite{quiver} for an introduction to quiver varieties.
In Section \ref{quivermutation}, we will introduce the quiver mutation, perform quiver mutations to star-shaped quivers, and then construct the corresponding varieties. In Section \ref{sec:D3mutequivquivers}, we will find all quivers that are mutation equivalent to $D_3$-type quiver and the corresponding varieties. 
\subsection{Quiver varieties}\label{sec:quivervariety}
An input data of a GIT quotient consists of the following ingredients:
\begin{enumerate}[(a)]
    \item an affine algebraic variety $V=\op{Spec}(A)$ over $\C$ with at most lci singularities;
    \item a connected reductive algebraic group $G$ acting on $V$;
    \item a character $\theta$ in the character group of $G$ denoted by $\chi(G):=\op{Hom}(G,\C^*)$. 
\end{enumerate}
Each character $\theta \in \chi(G)$ determines an one-dimensional representation $\C_\theta$ of $G$ and a line bundle over $V$,
\begin{equation}
    L_\theta:=V\times \C_\theta\in \text{Pic}^G(V)\,.
\end{equation}
\begin{dfn} 
     Given an input data $(V,G,\theta)$,
    $x\in V$ is called $\theta$-semistable if 
     $\exists\, k>0$ and $s\in {H}^0(V, {L}_\theta^k)^G$, such that $s(x)\neq 0$ and every $G$-orbit in $D_s=\{s\neq 0\}$ is closed. 
     Further, a $\theta$-semistable point $x\in V$ is called $\theta$-stable if its
    stabilizer $\op{Stab}_{G}(x)=\{ g\in G, g\cdot x=x\}$ is finite.
Let  $V^{ss}_\theta (G)$ denote the set of semistable points, $V^{s}_\theta(G)$ the set of stable points, and $V^{us}_\theta(G)$ the set of unstable points. 
The GIT quotient of $(V, G, \theta)$ is defined as $V\sslash_\theta G:=V^{ss}_\theta (G)\slash G$. 
\end{dfn}
The following will be assumed throughout. 
\begin{enumerate}[(a)]
    \item $V^s=V^{ss}\neq \emptyset$.
    \item The subscheme $V^s$ is nonsingular. 
    \item The group $G$ acts freely on $V^s$.
\end{enumerate}
Therefore, the GIT quotient $V\sslash_\theta G$ is smooth.
\begin{dfn}[\cite{quiver}]\label{dfn:quiver}
A  {quiver diagram} is a finite oriented graph consisting of $(Q_f\subset Q_0, Q_1, W)$ where
\begin{itemize}
    \item $Q_0$ is the set of vertices among which $Q_f$ is the set of frame (frozen) nodes, denoted by $\framebox(3,3){}$ in the graph, and $Q_0\backslash Q_f$ is the set of gauge nodes, denoted by $\bigcirc $;
    \item $Q_1$ is the set of arrows; an arrow from nodes $i$ to $j$ is denoted by $i\rightarrow j\in Q_1$,  and the number of such arrows is denoted by $b_{ij}$;
    \item a cycle is a path $i_0\rightarrow i_1\rightarrow\ldots\rightarrow i_k\rightarrow i_0$ starting from and ending at some node $i_0$; and the potential $W$ is defined as a function on cycles. 
\end{itemize}
\end{dfn}
We always assume that the quiver diagram has no $1$-cycle or $2$-cycles, known as the \textit{cluster quiver}.

\begin{dfn}\label{dfn:quivervariety}
A decorated quiver consists of a quiver with potential function $\mathbf Q=(Q_f\subseteq Q_0, Q_1, W)$ together with an integer vector $\vec v=(N_i)_{i\in Q_0}\in \mathbb Z_{>0}^{\abs{Q_0}}$ where $\abs{Q_0}$ is the number of nodes.
Those give rise to input data for a GIT quotient $(V, G, \theta)$ where $V=\bigoplus_{i\rightarrow j\in Q_1}\C^{N_i\times N_j}$, $G=\prod_{i\in Q_0\backslash Q_f}GL(N_i)$, and $\theta$ is a chosen character of $G$. 
We firmly fix the action of $G$ on $V$ in the following way. For each $g=(g_i)_{i\in Q_0\backslash Q_f}\in G$ and each $A=(A_{i\rightarrow j})_{i\rightarrow j\in Q_1}\in V$ with $A_{i\rightarrow j}$ an $N_i\times N_j$ matrix in the vector space $\C^{N_i\times N_j}$, we have
\begin{equation}\label{eqn:sec2GonV}
    g\cdot (A_{i\rightarrow j})=(g_iA_{i\rightarrow j}g_{j}^{-1})\,.
\end{equation}
For a fixed character $\theta\in \chi(G)$
\begin{equation}\label{eqn:polarizationchar}
\theta(g)=\prod_{i\in Q_{0}\backslash Q_f} \det(g_i)^{\sigma_i}\,,\,\,\sigma_i\in \R\,,
\end{equation}
the quiver variety is defined as the GIT quotient $V\sslash _{\theta}G$.
For each cycle $k_1\rightarrow k_2\rightarrow \cdots\rightarrow k_1$ in the quiver diagram, there is a $G$-invariant function on $V$,
\begin{equation}
    tr(A_{k_1\rightarrow k_2}\cdots A_{k_i\rightarrow k_1})\,.
\end{equation}
The potential $W$ is a sum of such $G$-invariant functions on cycles.
\end{dfn}
There is usually no arrow between two frame nodes in a quiver diagram. 
Whenever there is an arrow $i\rightarrow j\in Q_1$ ending at (starting from) a frame node $j$ ($i$), there is a torus $(\C^*)^{N_j}$ ($(\C^*)^{N_i}$) acting on $A_{i\rightarrow j}$ as 
$t\cdot A_{i\rightarrow j}=A_{i\rightarrow j}t^{-1}$ ($t\cdot A_{i\rightarrow j}=tA_{i\rightarrow j}$) where we view the $t$ as a diagonal matrix. 
Hence the frame nodes $Q_f$ constitute a torus action $S=\prod_{i\in Q_f}(\C^*)^{N_i}$ on $V$, such that $(\C^*)^{N_i}$ acts on matrices of arrows starting from or ending at the node $i\in Q_f$.
It is evident that this torus action commutes with $G$, so $S$ acts on $V\sslash_\theta G$. 
\begin{dfn}\label{dfn:outgoingincoming}
Given a quiver with potential function $\mathbf Q=(Q_f\subset Q_0, Q_1,W)$, the outgoing and incoming of a node $k$ are defined as $N_f(k):=\sum_{i}[b_{ki}]_+N_i$, and $N_a(k):=\sum_{i}[b_{ik}]_+N_i$, with $[b]_{+}:=\max\{ b,0\}$ for any integer $b$.
\end{dfn}
\noindent
Notice that the potential $W$ is $G$-invariant, so $W$ descends to a function on $V\sslash_\theta G$.
\begin{ex}\label{ex:star}
We start from a $D_3$-type quiver in Figure \ref{fig:star1} with an additional condition,
\begin{align}\label{eqn:Ns}
    N_4> N_3,\,N_3> N_1\geq 1,\,N_3> N_2\geq 1,\,N_4= N_1+N_2\,.
\end{align} 
\iffalse
\begin{figure}[H]
    \centering
    \includegraphics[width=1.8in]{2.png}
   \caption{The numerals below the nodes label the nodes.}
    \label{fig:star1}
\end{figure}
\fi
\begin{figure}[H]
    \centering
    \begin{tikzpicture}   
        \node[draw, circle, minimum size=0.8cm] (node-3) at (0,0){$N_3$};
        \node[draw, circle, minimum size=0.8cm] (node-1) at (-2,1){$N_1$};
        \node[draw, circle, minimum size=0.8cm] (node-2) at (-2,-1){$N_2$};
        \node[draw, minimum width=0.8cm, minimum height=0.8cm] (node-4) at (2,0){$N_4$};
       \draw[-stealth] (node-1) -- (node-3); 
        \draw[-stealth] (node-2) -- (node-3);
        \draw[-stealth] (node-3) -- (node-4);
        \node at (-2,0.4){$1$};
        \node at (-2,-0.4){$2$};
        \node at (0,-0.7){$3$};
        \node at (2,-0.7){$4$};
        \node at (-1,0.7){$A_1$};
        \node at (-1,-0.3){$A_2$};
        \node at (1,0.2){$A_3$};
        %\node at (0.7,-0.8){$A_4$};
    \end{tikzpicture}
    \caption{The numerals are used to label the nodes.}
    \label{fig:star1}
\end{figure}
\noindent
We denote this $D_3$ quiver by $\mathbf Q_0$.
Denote $V_0=\C^{N_1\times N_3}\oplus \C^{N_2\times N_3}\oplus \C^{N_3\times N_4}$, and $G_0=\prod_{i=1}^3GL(N_i)$.
Choose the phase \eqref{eqn:polarizationchar} as
\begin{equation}
    \sigma_i>0, \text{ for each } i\,.
\end{equation}
Let $G_0$ act on $V_0$ in the standard way \eqref{eqn:sec2GonV}. Then we can obtain the corresponding quiver variety which we denote by $\mc X_0:=V_0\sslash_{\theta_0}G_0$.
\end{ex}
\iffalse
\begin{rem}
The equation $N_4=N_1+N_2$ in  \eqref{eqn:Ns} is not necessary to construct the $D_3$ quiver, but it is necessary for our purpose, see the explanation in  Remark \ref{rem:explainNis}.
\end{rem}
\fi
\begin{dfn}\label{defn:starshapedquiver}
    A \text{star-shaped quiver} $(Q_f\subset Q_0,Q_1,W)$ is defined by the following conditions,
\begin{itemize}
    \item it is acyclic, which implies $W=0$,
    \item  there are only single arrows, which means the number of arrows between two nodes is at most $1$,
    \item the quiver diagram is star-shaped at a gauge node $k$, which means the node $k$ can have several incoming arrows and outgoing arrows, and all remaining nodes have at most 2 adjacent arrows,
    \item for each arrow $i\rightarrow j$ with $i\neq k$, we have $N_j>N_i$; at node $k$, $N_f(k)>N_k$, and $N_f(k)\geq N_a(k)$, 
    \item each outgoing path of node $k$ ends at a frozen node. 
\end{itemize}
\end{dfn}
\begin{ex}\label{ex:generalstar}
We consider a star-shaped quiver in Figure \ref{fig:gengeralstar} with two outgoing arrows and two incoming arrows in this example.
\iffalse
\begin{figure}[H]
    \centering
    \includegraphics[width=3.2in]{stargenera.png}
    \caption{A star-shaped quiver with two outgoing arrows and two incoming arrows.}
    \label{fig:gengeralstar}
\end{figure}
\fi
\begin{figure}[H]
    \centering
    \begin{tikzpicture}   
        \node[draw, circle, minimum size=0.8cm] (node-5) at (0,0){$N_5$};
        \node[draw, circle, minimum size=0.8cm] (node-3) at (-1.5,1){$N_3$};
        \node[draw, circle, minimum size=0.8cm] (node-4) at (-1.5,-1){$N_4$};
        \node[draw, circle, minimum size=0.8cm] (node-1) at (-3.5,1){$N_1$};
        \node[draw, circle, minimum size=0.8cm] (node-2) at (-3.5,-1){$N_2$};        
       \node[draw, circle, minimum size=0.8cm] (node-6) at (1.5,1){$N_6$};
        \node[draw, circle, minimum size=0.8cm] (node-7) at (1.5,-1){$N_7$};
        \node[draw, minimum width=0.8cm, minimum height=0.8cm] (node-8) at (3.5,1){$N_8$};
        \node[draw, minimum width=0.8cm, minimum height=0.8cm] (node-9) at (3.5,-1){$N_9$};
       \draw[-stealth] (node-1) -- (node-3); 
        \draw[-stealth] (node-2) -- (node-4);
        \draw[-stealth] (node-4) -- (node-5);
        \draw[-stealth] (node-3) -- (node-5);
        \draw[-stealth] (node-5) -- (node-6);
        \draw[-stealth] (node-5) -- (node-7);
        \draw[-stealth] (node-6) -- (node-8);
        \draw[-stealth] (node-7) -- (node-9);
        \node at (-3.5,0.4){$1$};
        \node at (-1.5,0.4){$3$};
        \node at (-1.5,-1.6){$4$};
        \node at (0,-0.7){$5$};
        \node at (-3.5,-1.6){$2$};
         \node at (3.5,0.4){$8$};
        \node at (1.5,0.4){$6$};
        \node at (1.5,-1.6){$7$};
        \node at (3.5,-1.6){$9$};
         \node at (-2.5,1.2){$A_1$};
         \node at (-2.5,-0.8){$A_2$};
        \node at (-0.7,0.7){$A_3$};
        \node at (-0.7,-0.8){$A_4$};
        \node at (0.7,0.8){$A_5$};
        \node at (0.7,-0.8){$A_6$};
        \node at (2.5,1.2){$A_8$};
         \node at (2.5,-0.8){$A_9$};
    \end{tikzpicture}
    \caption{A star-shaped quiver with two outgoing arrows and two incoming arrows.}
    \label{fig:gengeralstar}
\end{figure}
\noindent
We have $N_j>N_i$ for each arrow $i\rightarrow j$, $i\neq 5$,  and $N_6+N_7>N_5$, $N_6+N_7\geq N_3+N_4$. 
Denote input data for the GIT quotient by $(V_s,G_s,\theta_s)$ where the subscript $s$ represents the star-shaped.
In the phase
\begin{equation}\label{eqn:phaseofstar}
    \sigma_i>0,\,\,i=1,\ldots,7,
\end{equation}
we have
\begin{equation}
    V_s^{ss}(G_s)=\{A_i|\,A_1,A_2,A_3,A_4,A_7,A_8,\begin{bmatrix}A_5&A_6\end{bmatrix} \text{ are non-degenerate}\}.
\end{equation}
The quiver variety is the GIT quotient $\mc X_s:=V_s\sslash_{\theta_s}G_s$.
\end{ex}
\begin{ex}\label{ex:ggeneral}
    Consider a general star-shaped quiver as in Definition \ref{defn:starshapedquiver}, which is as shown in Figure \ref{fig:regenealstar}. The vertical dots represent several nodes. We have used $j_1,\ldots,j_h$ to represent the incoming nodes and $i_1,\ldots,i_l$ to represent the outgoing nodes. The conditions for integers are 
    \begin{align}
        &N_{j}>N_i, \text{ if } \exists \,i\rightarrow j\in Q_1 \text{ and } i\neq k,\nonumber\\
        & N_f(k)>N_k,\,N_f(k)\geq N_a(k)\,.
    \end{align}
\iffalse
    \begin{figure}[H]
        \centering
        \includegraphics[width=3in]{regenealstar.png}
        \caption{A general star-shaped quiver}
        \label{fig:regenealstar}
    \end{figure}
\fi
    \begin{figure}[H]
    \centering
    \begin{tikzpicture}   
        \node[draw, circle, minimum size=0.8cm] (node-5) at (0,0){$N_k$};
        \node[draw, circle, minimum size=0.8cm] (node-3) at (-1.5,1.2){$N_{j_1}$};
         \node[] (node-dl) at (-1.5,0){$\vdots$};
         \node[] (node-dll) at (-3,0){};
        \node[draw, circle, minimum size=0.8cm] (node-4) at (-1.5,-1.2){$N_{j_h}$};
        \node[] (node-1) at (-3,1.2){};
        \node[] (node-2) at (-3,-1.2){};        
       \node[draw, circle, minimum size=0.8cm] (node-6) at (1.5,1.2){$N_{i_1}$};
       \node[] (node-dr) at (1.5,0){$\vdots$};
       \node[] (node-drr) at (3,0){};
        \node[draw, circle, minimum size=0.8cm] (node-7) at (1.5,-1.2){$N_{i_l}$};
        \node[] (node-8) at (3,1.2){};
        \node[] (node-9) at (3,-1.2){};
       \draw[-stealth] (node-1) -- (node-3); 
       \draw[-stealth] (node-dll)--(node-dl);
       \draw[-stealth] (node-dl)--(node-5);
        \draw[-stealth] (node-2) -- (node-4);
        \draw[-stealth] (node-4) -- (node-5);
        \draw[-stealth] (node-3) -- (node-5);
        \draw[-stealth] (node-5) -- (node-6);
        \draw[-stealth] (node-5) -- (node-7);
        \draw[-stealth] (node-6) -- (node-8);
        \draw[-stealth] (node-7) -- (node-9);
        \draw[-stealth] (node-5)--(node-dr);
        \draw[-stealth] (node-dr)--(node-drr);
    \end{tikzpicture}
     \caption{A general star-shaped quiver}
        \label{fig:regenealstar}
\end{figure}
    \noindent
    Let the input data for GIT quotient be $(V_g, G_g,\theta_g)$ where the subscript g represents the word general. 
    For character $\theta_g=\prod_{i\in Q_0\backslash Q_f}\det(g_i)^{\sigma_i}$, choose phase $\sigma_i>0,\forall i\in Q_0\backslash Q_f$. 
    Then
\begin{align}
    V^{ss}_{\theta_g}(G_g)=
    \left\{A_{i\rightarrow j}|\, \text{matrices } A_{i\rightarrow j} \text{ for }i\neq k \text{ and matrix }\begin{bmatrix}A_{k\rightarrow i_1} \\ \vdots\\ A_{k\rightarrow i_l}\end{bmatrix} \text{ nondegenerate} \right\}
\end{align}
Denote the quiver variety by $\mc X_g:=V_g\sslash_{\theta_g} G_g$.
\end{ex}
\subsection{Quiver Mutation}\label{quivermutation}
We introduce the quiver mutation applet in this section.
Fix a decorated quiver with potential $\mathbf Q=(Q_f\subseteq Q_0,Q_1,W)$ and an integer vector $\vec v=(N_i)_{i\in Q_0}$.
\begin{dfn}\label{dfn:mutation}
A quiver mutation at a specific gauge node $k$, denoted by $\mu_k$, is defined by the following steps,
\begin{itemize}
    \item \textbf{Step (1)} For each path $i\rightarrow k\rightarrow j$ passing through $k$, add another arrow $i\rightarrow j$, invert directions of all arrows that start from and end at the node $k$, and denote the new arrows by $j\xrightarrow[]{*} k, k\xrightarrow[]{*} i$. 
    \item \textbf{Step (2)} Convert $N_k$ to $N'_k=\max(N_f(k), N_a(k))-N_k$, where $N_a(k)$ and $N_f(k)$ are defined in Definition \ref{dfn:outgoingincoming}. 
    \item \textbf{Step (3)} Remove all pairs of opposite arrows between two nodes introduced by the mutation until all arrows between the two nodes are in a unique direction. 
     \item \textbf{Step (4)} 
     Replace the path $i\rightarrow k\rightarrow j$ by $i\rightarrow j$ whenever it appears in the potential $W$.
     Add a new cubic term of the $3$-cycle $j\xrightarrow[]{*}k\xrightarrow[]{*} i\rightarrow j$ to $W$. Denote the resulting new potential by $ W'$. 
\end{itemize}
\end{dfn}
There are subtleties for the potential in Step $(4)$ when the potential $W$ is nontrivial, and we cannot delete the terms containing annihilated arrows in step $(2)$ directly. Instead, for each path $i\rightarrow k\rightarrow j$, denote the matrices of its inverted arrows $j\xrightarrow[]{*} k\,,\,k\xrightarrow[]{*} i $  by  $A_{j\rightarrow k}^*,\, A_{k\rightarrow i}^*$ and the matrix of the added arrow $i\rightarrow j$ by $A_{i\rightarrow j}$. 
We rewrite the original potential as $W=W_0+W_1$ where $W_0$ contains all terms with the factor $A_{i\rightarrow k}A_{k\rightarrow j}$ for each path $i\rightarrow k\rightarrow j$. Then the step ${(4)}$ in Definition \ref{dfn:mutation} converts $W$ to $W'=\sum A_{j\rightarrow k}^*A_{k\rightarrow i}^*A_{i\rightarrow j}+W_0'+W_1$ where the sum is over all new cubic terms arising from the 3-cycles $j\xrightarrow[]{*} k\xrightarrow[]{*} i\rightarrow j$ and $W_0'$ is obtained by replacing $A_{i\rightarrow k}A_{k\rightarrow j}$ in $W_0$ by $A_{i\rightarrow j}$. 
There might be a quadratic term $tr(A_{i\rightarrow j}A_{j\rightarrow i})$ in $W_0'$ when $W_0$ has a $3$-cycle containing the path $i\rightarrow k\rightarrow j$. 
Whenever this happens, we need to consider the constraints 
\begin{align}\label{eqn:constraints}
    \frac{\partial W'}{\partial A_{i\rightarrow j}^{ab}}=0\,,\,
    \frac{\partial W'}{\partial A_{j\rightarrow i}^{ab}}=0
\end{align}
where we write the potential $W$ as function on entries of matrices and $A_{i\rightarrow j}^{ab}$ denotes the $(a,b)$-entry of the matrix $A_{i\rightarrow j}$ and so does $A_{j\rightarrow i}^{ab}$. 
Replace $A_{i\rightarrow j}$ and $A_{j\rightarrow i}$ in $W'$ accordingly by the constraints in \eqref{eqn:constraints}, and we obtain the new potential $\tilde W$ in the dual side. See \cite[Section 5]{clusterpot1} and \cite[Section 3.4]{benini2015cluster}. 
This subtlety happens in Example \ref{ex:m1m3}. 

Via the quiver mutation and the above recipe for the potential function, we obtain a new quiver with superpotential, denoted by $\tilde{\mathbf Q}=(\tilde Q_f\subset \tilde Q_0,\tilde Q_1,\tilde W)$.
Quiver mutations do not generate any 1-cycle or 2-cycles by step $(3)$, so the resulting quiver is still a cluster quiver. 

One can check that the quiver mutation is an involution, which means $\mu_k^2=Id$, for any $k$.
\begin{rem}
\begin{enumerate}[(1)]
    \item In the above definition, we assume that  $\max(N_f(k), N_a(k))-N_k>0$. Otherwise, the resulting quiver fails to define a variety. 
    \item In the third step of quiver mutation, 
    when we remove pairs of opposite arrows, it doesn't depend on the order we remove. However it is unclear whether it depends on the choices of arrows, when $b_{ij}\neq b_{ji}$.
    This will be further studied in our future work. In this work, there is no such issue, since there will be only at most one pair of opposite arrows between two nodes in all our examples. 
\end{enumerate}
\end{rem}
In order to obtain a variety after a quiver mutation, we need to know the character $\tilde\theta$. 
We use the proposed rule in Conjecture \ref{conj:phase} to find the new character $\tilde\theta$. 
\begin{ex}\label{ex:starmc}
We perform a quiver mutation $\mu_5$ at the center node to the general star-shaped quiver introduced in Example \ref{ex:generalstar}, and get a quiver diagram in Figure \ref{fig:generalstarmc} with four 3-cycles
\iffalse
\begin{figure}[H]
    \centering
    \includegraphics[width=3.2in]{starmc.png}
    \caption{$N_5'=N_6+N_7-N_5$}
    \label{fig:generalstarmc}
\end{figure}
\fi
\begin{figure}[H]
    \centering
    \begin{tikzpicture}   
        \node[draw, circle, minimum size=0.8cm] (node-5) at (0,0){$N_5'$};
        \node[draw, circle, minimum size=0.8cm] (node-3) at (-1.5,1){$N_3$};
        \node[draw, circle, minimum size=0.8cm] (node-4) at (-1.5,-1){$N_4$};
        \node[draw, circle, minimum size=0.8cm] (node-1) at (-3.5,1){$N_1$};
        \node[draw, circle, minimum size=0.8cm] (node-2) at (-3.5,-1){$N_2$};        
       \node[draw, circle, minimum size=0.8cm] (node-6) at (1.5,1){$N_6$};
        \node[draw, circle, minimum size=0.8cm] (node-7) at (1.5,-1){$N_7$};
        \node[draw, minimum width=0.8cm, minimum height=0.8cm] (node-8) at (3.5,1){$N_8$};
        \node[draw, minimum width=0.8cm, minimum height=0.8cm] (node-9) at (3.5,-1){$N_9$};
       \draw[-stealth] (node-1) -- (node-3); 
        \draw[-stealth] (node-2) -- (node-4);
        \draw[-stealth] (node-5) -- (node-4);
        \draw[-stealth] (node-5) -- (node-3);
        \draw[-stealth] (node-6) -- (node-5);
        \draw[-stealth] (node-7) -- (node-5);
        \draw[-stealth] (node-6) -- (node-8);
        \draw[-stealth] (node-7) -- (node-9);
         \draw[-stealth] (node-3) to [bend left] (node-6);
        \draw[-stealth] (node-3) to [bend right] (node-7);
        \draw[-stealth] (node-4) to [bend left] (node-6);
        \draw[-stealth] (node-4) to [bend right] (node-7);
         \node at (-2.5,1.2){$A_1$};
         \node at (-2.5,-0.8){$A_2$};
        \node at (-0.7,0.6){$A_3$};
        \node at (-0.7,-0.6){$A_4$};
        \node at (0.7,0.6){$A_5$};
        \node at (0.7,-0.6){$A_6$};
        \node at (2.5,1.2){$A_8$};
         \node at (2.5,-0.8){$A_9$};
         \node at (0,0.9){$B_2$};
          \node at (0,1.7){$B_1$};
          \node at (0,-0.9){$B_3$};
          \node at (0,-1.7){$B_4$};
    \end{tikzpicture}
    \caption{$N_5'=N_6+N_7-N_5$}
    \label{fig:generalstarmc}
\end{figure}
 \noindent
It has a potential function
\begin{equation}
    \tilde W_s=tr(B_1A_5A_3)+tr(B_2A_5A_4)+tr(B_3A_6A_3)+tr(B_4A_6A_4).
\end{equation}
Denote the input data for the GIT by 
$(\tilde V_s, \tilde G_s, \tilde \theta_s)$.
We denote the character $\tilde \theta_s(\tilde g)=\prod_{i\in Q_0\backslash Q_f}\det(\tilde g_i)^{\tilde \sigma_i}$ temporarily. 
According to the Conjecture \ref{conj:phase}, 
$\tilde \sigma_6=\sigma_5+\sigma_6, \tilde \sigma_7=\sigma_5+\sigma_7, \tilde\sigma_5=-\sigma_5$, and $\tilde\sigma_i=\sigma_i$, for $i=1,2,3,4$. 
Then substitute $\sigma_6=\tilde\sigma_6-\sigma_5=\tilde\sigma_6+\tilde\sigma_5$, $\sigma_7=\tilde\sigma_7+\tilde\sigma_5$ to equality \ref{eqn:phaseofstar}, and we have  
\begin{equation}\label{eqn:phaseofgeneralstarmc}
    \tilde \sigma_i>0, \text{ for } i\neq 5,6,7,\, \tilde \sigma_5<0, \, \tilde \sigma_5+\tilde \sigma_6>0,\,\tilde \sigma_5+\tilde \sigma_7>0\,.
\end{equation}

\iffalse
In this phase, the semistable locus is 
\begin{equation}
    \tilde V_s^{ss}(\tilde G_s)\subset\{A_i,B_j|\, A_1,A_2,A_7,A_8\,\begin{bmatrix}B_1&B_3\end{bmatrix},\,\begin{bmatrix}B_2&B_4\end{bmatrix},\,
\begin{bmatrix}A_5\\A_6\end{bmatrix}\text{ non-degenerate}\}.
\end{equation}
\fi
Consider the critical locus of the potential function $\tilde W_s$, which we denote by $\tilde Z_s=d(\tilde W_s)$. One can check that 
\begin{align}
   &\tilde Z_s^{ss}(\tilde G_s)=
   \bigg\{\begin{bmatrix}B_1&B_3
   \end{bmatrix}
   \begin{bmatrix}A_5\\A_6\end{bmatrix}=0,
   \begin{bmatrix}B_2&B_4\end{bmatrix}
   \begin{bmatrix}A_5\\A_6\end{bmatrix}=0, A_3=0,A_4=0\,\nonumber|
    \\
   & A_1,A_2,A_7,A_8, \begin{bmatrix}B_1&B_3\end{bmatrix},\,\begin{bmatrix}B_2&B_4\end{bmatrix},\,\begin{bmatrix}A_5\\A_6\end{bmatrix} \text{ non-degenerate} 
    \bigg\}.
\end{align}
    Consider another quiver obtained by deleting the arrows $5\rightarrow 3$ and $5\rightarrow 4$ of the quiver in Figure \ref{fig:generalstarmc} whose matrices vanish in $\tilde Z_s^{ss}(\tilde G_s)$. 
    Denote the input data of the resulting new quiver by $(\overline V_s, \tilde G_s,\tilde \theta_s)$.
    It has the same gauge group $\tilde G_s$ and character $\tilde\theta_s$ as above.
    Then
\begin{align}
       \overline V_{s, \tilde\theta_s}^{ss}(\tilde G_s)=\bigg\{A_i,B_j| A_1,A_2,A_7,A_8,\begin{bmatrix}B_1&B_3\end{bmatrix},\,\begin{bmatrix}B_2&B_4\end{bmatrix},\,\begin{bmatrix}A_5\\A_6\end{bmatrix} \text{ non-degenerate}
    \bigg\}.
\end{align}
Denote the new quiver variety by $\tilde X_s:=\overline V_s\sslash_{\tilde \theta_s} \tilde G_s$. Then
$\tilde{\mc Z}_s$ is a subvariety of $\tilde{\mc X}_s$ defined by 
\[\begin{bmatrix}B_1&B_3\end{bmatrix}\begin{bmatrix}A_5\\A_6\end{bmatrix}=0,\begin{bmatrix}B_2&B_4\end{bmatrix}\begin{bmatrix}A_5\\A_6\end{bmatrix}=0 .\]
\end{ex}
\begin{ex}\label{ex:ggeneralstarmc}
    In this example, we consider the quiver mutation to the general star-shaped quiver in Example \ref{ex:ggeneral} at the center node $k$, and obtain the quiver with potential in Figure \ref{fig:regenealstarmc}.
    \iffalse
    \begin{figure}[H]
        \centering
        \includegraphics[width=3.2in]{regenealstarmc.png}
        \caption{The potential function is $W=\sum_{a=1}^{h}\sum_{b=1}^l
        tr(A_{j_a\rightarrow i_b}A_{i_b\rightarrow k}A_{k\rightarrow j_a}),\, N_k'=N_f(k)-N_k$.}
        \label{fig:regenealstarmc}
    \end{figure}
\fi
       \begin{figure}[H]
    \centering
    \begin{tikzpicture}   
        \node[draw, circle, minimum size=0.8cm] (node-5) at (0,0){$N_k'$};
        \node[draw, circle, minimum size=0.8cm] (node-3) at (-1.5,1.2){$N_{j_1}$};
         \node[] (node-dl) at (-1.5,0){$\vdots$};
         \node[] (node-dll) at (-3,0){};
        \node[draw, circle, minimum size=0.8cm] (node-4) at (-1.5,-1.2){$N_{j_h}$};
        \node[] (node-1) at (-3,1.2){};
        \node[] (node-2) at (-3,-1.2){};        
       \node[draw, circle, minimum size=0.8cm] (node-6) at (1.5,1.2){$N_{i_1}$};
       \node[] (node-dr) at (1.5,0){$\vdots$};
       \node[] (node-drr) at (3,0){};
        \node[draw, circle, minimum size=0.8cm] (node-7) at (1.5,-1.2){$N_{i_l}$};
        \node[] (node-8) at (3,1.2){};
        \node[] (node-9) at (3,-1.2){};
       \draw[-stealth] (node-1) -- (node-3); 
       \draw[-stealth] (node-dll)--(node-dl);
       \draw[-stealth] (node-5)--(node-dl);
        \draw[-stealth] (node-2) -- (node-4);
        \draw[-stealth] (node-5) -- (node-4);
        \draw[-stealth] (node-5) -- (node-3);
        \draw[-stealth] (node-6) -- (node-5);
        \draw[-stealth] (node-7) -- (node-5);
        \draw[-stealth] (node-6) -- (node-8);
        \draw[-stealth] (node-7) -- (node-9);
        \draw[-stealth] (node-dr)--(node-5);
        \draw[-stealth] (node-dr)--(node-drr);
        \draw[-stealth] (node-3) to [bend left] (node-6);
        \draw[-stealth] (node-3) to [bend left] (node-7);
        \draw[-stealth] (node-3) to [bend left] (node-dr);
        \draw[-stealth] (node-4) to [bend right] (node-6);
        \draw[-stealth] (node-4) to [bend right] (node-7);
        \draw[-stealth] (node-4) to [bend right] (node-dr);
        \draw[-stealth] (node-dl) to [bend left] (node-6);
        \draw[-stealth] (node-dl) to [bend left] (node-dr);
        \draw[-stealth] (node-dl) to [bend right] (node-7);
    \end{tikzpicture}
      \caption{The potential function is $W=\sum_{a=1}^{h}\sum_{b=1}^l
        tr(A_{j_a\rightarrow i_b}A_{i_b\rightarrow k}A_{k\rightarrow j_a}),\, N_k'=N_f(k)-N_k$.}
        \label{fig:regenealstarmc}
\end{figure}
    \noindent
The proposed phase is 
\begin{align}\label{eqn:phaseforgeneralstar}
    \sigma_k<0;\,\, \sigma_i>0 \text{ for }i\neq k,i_1,\ldots,i_l;\,\, \sigma_k+\sigma_{i_b}>0\,,b=1,\ldots,l.
\end{align}
The semistable locus of critical locus $Z_g=\{dW=0\}$ is 
\begin{align}\label{eqn:semistgenstar}
   Z_g^{ss}(\tilde G_g)=&\Bigg\{
   \begin{bmatrix}
   A_{j_a\rightarrow i_1}&\cdots& A_{j_a\rightarrow i_l}
   \end{bmatrix}
   \begin{bmatrix}
   A_{i_1\rightarrow k}\\ \cdots\\ A_{i_l\rightarrow k}
   \end{bmatrix}=0, \,
   A_{j_a\rightarrow k}=0,\, a=1\ldots h\, \nonumber \\
   &\big|\,\text{matrices } 
   \begin{bmatrix}
   A_{j_a\rightarrow i_1}&\cdots& A_{j_a\rightarrow i_l}
   \end{bmatrix} a=1\ldots h \text{ nondegenerate};
   \nonumber\\
   &
   \text{matrix} 
   \begin{bmatrix}
   A_{i_1\rightarrow k}\\ \cdots\\ A_{i_l\rightarrow k}
   \end{bmatrix} \text{ nondegenerate}
   \Bigg\}
\end{align}
Denote the GIT quotient of the critical locus by $\tilde{\mc Z}_g=Z_g^{ss}\slash\tilde G_g$.
\end{ex}
\subsection{Quivers that are mutation equivalent to $D_3$ quiver}\label{sec:D3mutequivquivers}
In this section, we will find all quivers that are mutation equivalent to the $D_3$-type quiver. Let $\Omega$ denote such a set. Furthermore, we will find the correct phase proposed by Conjecture \ref{conj:phase} for each quiver, and find the corresponding variety for each one. We will first perform $\mu_3\rightarrow \mu_1\rightarrow \mu_2$ repeatedly which gets most quivers in $\Omega$, and then we will give the remaining elements in $\Omega$. 

\begin{ex}\label{ex:m3}
We perform a quiver mutation $\mu_3$ to the quiver diagram in Figure \ref{fig:star1}, 
and obtain a quiver in Figure \ref{fig:mu3} with a potential function $W_1=tr(B_1A_3A_{1})+tr(B_2A_3A_2)$. We denote this quiver by $\mathbf Q_1$.
\iffalse
\begin{figure}[H]
    \centering
    \includegraphics[width=1.8in]{D4m3.png}
   \caption{$N_3'=N_4-N_3$}
    \label{fig:mu3}
\end{figure}
\fi
\begin{figure}[H]
    \centering
    \begin{tikzpicture}   
        \node[draw, circle, minimum size=0.8cm] (node-3) at (0,0){$N_3'$};
        \node[draw, circle, minimum size=0.8cm] (node-1) at (-2,1){$N_1$};
        \node[draw, circle, minimum size=0.8cm] (node-2) at (-2,-1){$N_2$};
        \node[draw, minimum width=0.8cm, minimum height=0.8cm] (node-4) at (2,0){$N_4$};
       \draw[-stealth] (node-3) -- (node-1); 
        \draw[-stealth] (node-3) -- (node-2);
        \draw[-stealth] (node-4) -- (node-3);
        \draw[-stealth] (node-1) to [bend left](node-4);
         \draw[-stealth] (node-2) to [bend right](node-4);
        \node at (-1,0.7){$A_1$};
        \node at (-1,-0.2){$A_2$};
        \node at (1,0.2){$A_3$};
         \node at (0,1.4){$B_1$};
          \node at (0,-1){$B_2$};
        %\node at (0.7,-0.8){$A_4$};
    \end{tikzpicture}
   \caption{$N_3'=N_4-N_3$}
    \label{fig:mu3}
\end{figure}
\noindent
Denote the input data of the quiver by $(V_1 ,G_1,\theta_1)$.
Under the rule in Conjecture \ref{conj:phase}, the phase for $G_1$ is 
\begin{equation}\label{eqn:phasemu3}
 \sigma_{1}>0\,,\sigma_2>0\,, \sigma_3<0.
\end{equation}
Consider the critical locus of the potential $Z_1:=Z(d W_1)$, which is equivalent to the following equations,
\begin{equation}\label{eqn:1}
    B_1A_3=0\,,\,B_2A_3=0\,,\,A_3A_{1}=0\,,\, A_3A_2=0\,,\, A_1B_1+A_2B_2=0\,.
\end{equation}
Then $$Z^{ss}_{1,\theta_1}(G_1)=\{A_1=0,\,A_2=0,\,  B_1A_3=0,\,B_2A_3=0|\,B_1,\,B_2,\,A_3 \text{ non-degenerate }\}.$$
Consider another quiver obtained by deleting arrows $3\rightarrow 1, 3\rightarrow 2$ whose matrices in $Z^{ss}_{1,\theta_1}(G_1)$ vanish.
\iffalse
\begin{figure}[H]
   \centering
   \includegraphics[width=1.8in]{m3a.png}
   % \caption{The quiver diagram obtained by deleting the arrows $3\rightarrow 1, \,3\rightarrow 2$ from Figure \ref{fig:mu3}.}
    %\label{fig:mu3a}
\end{figure}
\fi
 Let $(\tilde V_1, G_1,\theta_1)$ be the input data of the new quiver where $\theta_1$ is as \eqref{eqn:phasemu3}. Let $\mc X_1:=\tilde V_1\sslash _{\theta_1}G_1$ be the corresponding quiver variety. 
The GIT quotient $\mc Z_1:=Z^{ss}_{1,\theta_1} /G_1$ is a subvariety in $\mc X_1$ defined by equations
\begin{equation}\label{eqn:Z1}
   B_1A_3=0, \, B_2A_3=0\,.
\end{equation}
\end{ex}
\begin{ex}\label{ex:m1m3}
We perform another quiver mutation $\mu_1$ to the quiver in Figure \ref{fig:mu3} and get that in Figure \ref{fig:m1m3}. We denote this new quiver by $\mathbf Q_2$.
\iffalse
\begin{figure}[H]
    \centering
    \includegraphics[width=1.8in]{mu1mu3.png}
    \caption{The integer assigned to node 1 is $N_4-N_1=N_2$. The dashed opposite arrows are annihilated.}
    \label{fig:m1m3}
\end{figure}
\fi
\begin{figure}[H]
    \centering
    \begin{tikzpicture}   
        \node[draw, circle, minimum size=0.8cm] (node-3) at (0,0){$N_3'$};
        \node[draw, circle, minimum size=0.8cm] (node-1) at (-2,1){$N_2$};
        \node[draw, circle, minimum size=0.8cm] (node-2) at (-2,-1){$N_2$};
        \node[draw, minimum width=0.8cm, minimum height=0.8cm] (node-4) at (2,0){$N_4$};
       \draw[-stealth] (node-1) -- (node-3); 
        \draw[-stealth] (node-3) -- (node-2);
        \draw[dashed,->] (1.6,-0.1) -- (0.5,-0.1);
         \draw[dashed,->] (0.5,0.1) -- (1.6,0.1);
        \draw[-stealth] (node-4) to [bend right](node-1);
         \draw[-stealth] (node-2) to [bend right](node-4);
        \node at (-1,0.7){$A_1$};
        \node at (-1,-0.2){$A_2$};
        \node at (1,0.3){$A_{12}$};
        \node at (1,-0.3){$A_{3}$};
         \node at (0,1.4){$B_1$};
          \node at (0,-1){$B_2$};
          \node at (-2,0.4){$1$};
          \node at (-2,-0.4){$2$};
          \node at (0,0.6){$3$};
        \node at (2,-0.6){$4$};
        %\node at (0.7,-0.8){$A_4$};
    \end{tikzpicture}
    \caption{The integer assigned to node 1 is $N_4-N_1=N_2$. The dashed opposite arrows are annihilated.}
    \label{fig:m1m3}
\end{figure}
\noindent
To construct the new potential, we first replace the factor $A_1B_1$ by the matrix $A_{12}$ and add a new cubic term $tr(B_1A_1A_{12})$  arising from the 3-cycle $4\rightarrow 1\rightarrow 3\rightarrow 4$, and we get a new potential 
$W_2'=tr(A_{12}A_3)+tr(A_2B_2A_3)+tr(B_1A_1A_{12})$. In $ W_2'$, there is a quadratic term $tr(A_{12}A_3)$, so we take the derivative to $W'$ in terms of these two factors
\begin{equation}
    d_{A_{12}^{ab}} W_2'=0,\,\,d_{A_3^{ab}}W_2'=0\,,
\end{equation}
and get constraints for $W_2'$,
\begin{equation}
    A_3+B_1A_1=0,\,\, A_{12}+A_2B_2=0\,.
\end{equation}
Substituting $A_{3}=-A_2B_2$ and $A_{12}=-A_2B_2$, we get the potential  $W_2=tr(B_1A_1A_2B_2)$, where we have neglected the negative sign in front of it.

Let $(V_2, G_2, \theta_2)$ be the input data for the quiver Figure \ref{fig:m1m3}. 
Let $Z_2=\{dW_2=0\}\subset V_2$.
In the proposed phase
\begin{equation}\label{eqn:Q2phase}
\sigma_1<0,\,\,\sigma_2>0,\,\,\sigma_3<0\,.
\end{equation}
the semistable locus is 
\begin{equation}
  Z^{ss}_{2,\theta_2}(G_2)=
  \{A_2=0,B_2B_1A_1=0| B_1, A_1,B_2 \text{ nondegenerate}\}\,.
\end{equation}
Consider a new quiver by deleting the arrow $3\rightarrow 2$ in the Figure \ref{fig:m1m3}.
\iffalse
\begin{figure}[H]
    \centering
    \includegraphics[width=1.8in]{mu1mu3a.png}
    %\caption{A quiver diagram obtained by deleting the arrow $3\rightarrow 2$ from Figure \ref{fig:m1m3}.}
    %\label{fig:my_label}
\end{figure}
\noindent 
\fi
Denote the corresponding input data of the quiver by $(\tilde V_2, G_2, \theta_2)$, and denote its quiver variety by $\mc X_2:=\tilde V_2\sslash_{\theta_2} G_2$.
We find that the variety  $\mc Z_2= Z_2 \sslash_{\theta_2} G_2$ is a subvariety in the quiver variety $\mc X_2$.
\end{ex}
\begin{ex}\label{ex:m2m1m3}
performing $\mu_2$ to the quiver in Figure \ref{fig:m1m3}, 
we obtain a new quiver in Figure \ref{fig:m2mu1mu3}, which we denote by $\mathbf Q_3$.
\iffalse
\begin{figure}[H]
    \centering
    \includegraphics[width=1.8in]{m2mu1mu3a.png}
    \caption{The integer assigned to node 2 is $N_4-N_2=N_1$}
    \label{fig:m2mu1mu3}
\end{figure}
\fi
\begin{figure}[H]
    \centering
    \begin{tikzpicture}   
        \node[draw, circle, minimum size=0.8cm] (node-3) at (0,0){$N_3'$};
        \node[draw, circle, minimum size=0.8cm] (node-1) at (-2,1){$N_2$};
        \node[draw, circle, minimum size=0.8cm] (node-2) at (-2,-1){$N_1$};
        \node[draw, minimum width=0.8cm, minimum height=0.8cm] (node-4) at (2,0){$N_4$};
       \draw[-stealth] (node-1) -- (node-3); 
        \draw[-stealth] (node-2) -- (node-3);
        \draw[->] (node-3) -- (node-4);
        % \draw[dashed,->] (0.5,0.1) -- (1.6,0.1);
        \draw[-stealth] (node-4) to [bend right](node-1);
         \draw[-stealth] (node-4) to [bend left](node-2);
        \node at (-1,0.7){$A_1$};
        \node at (-1,-0.2){$A_2$};
       % \node at (1,0.3){$A_{12}$};
        \node at (1,0.3){$A_{3}$};
         \node at (0,1.4){$B_1$};
          \node at (0,-1){$B_2$};
          \node at (-2,0.4){$1$};
          \node at (-2,-0.4){$2$};
          \node at (0,0.6){$3$};
        \node at (2,-0.6){$4$};
        %\node at (0.7,-0.8){$A_4$};
    \end{tikzpicture}
    \caption{The integer assigned to node 2 is $N_4-N_2=N_1$. $N_3'=N_4-N_3$.}
    \label{fig:m2mu1mu3}
\end{figure}
\noindent 
The potential is obtained by replacing $A_2B_2$ by $A_3$ and adding the cubic term $tr(B_2A_2A_3)$, so $W_3=tr(B_1A_1A_3)+tr(B_2A_2A_3)$.
Denote the input data for the quiver variety by $(V_3, G_3, \theta_3)$.

The natural character is 
$\theta_3(g)=\det(g_1)^{\sigma_1}\det(g_2)^{\sigma_2}\det(g_3)^{\sigma_3}$ with 
\begin{equation}\label{phase:3-}
\sigma_1<0,\,\,\sigma_2<0,\,\,\sigma_3<0\,,
\end{equation}
by our Conjecture \ref{conj:phase}. 
The critical locus $Z_3=\{dW=0\}$ is equivalent to 
\begin{equation}\label{eqn:dW3}
    A_1A_3=0,\,A_2A_3=0,\, B_1A_1+B_2A_2=0,\, A_3B_1=0,\,A_3B_2=0.
\end{equation}
In the phase \eqref{phase:3-}, 
\begin{equation}
    Z^{ss}_{3,\,\theta_3}(G_3)=\{ A_3=0, \begin{bmatrix} B_1& B_2 \end{bmatrix} \begin{bmatrix} A_1\\ A_2 \end{bmatrix}=0\rvert\, B_1,B_2,
    \begin{bmatrix} A_1\\ A_2 \end{bmatrix} \text{nondegenerate}\, \}\,.
\end{equation}

Consider another quiver obtained by deleting $3\rightarrow 4$ in quiver $\mathbf Q_3$.
\iffalse
\begin{figure}[H]
    \centering
    \includegraphics[width=1.8in]{m2m1m3c.png}
   % \caption{Caption}
   % \label{fig:m2m1m3c}
\end{figure}
\noindent
\fi
Denote the corresponding input data by $(U_3, G_3,\theta_3)$, and then 
\begin{equation}
    U^{ss}_{3,\,\theta_3}(G_3)=
    \{B_1,B_2,
    \begin{bmatrix}A_1\\A_2\end{bmatrix} \text{non-degenerate}\}\,.
\end{equation}
The critical locus $\mc Z_3=Z_{3,\,\theta_3}^{ss}\slash G_3$ can be viewed as a subvariety in $U_3\sslash_{\theta_3}G_3$ defined by equations
\begin{equation}
    \begin{bmatrix} B_1& B_2 \end{bmatrix} \begin{bmatrix} A_1\\ A_2 \end{bmatrix}=0
\end{equation}
\end{ex}
\iffalse
Until now, we have performed quiver mutations $\mu_3,\mu_1,\mu_2$ to the original $D_3$ quiver. Readers may pause here, perform quiver mutations $\mu_3$, $\mu_1$, $\mu_2$ consecutively again, and construct the corresponding quiver varieties. The following quivers are very simple since their potentials are all trivial. We will just list them all for readers to check their answers.
\fi
\begin{ex}\label{ex}
We perform quiver mutations $\mu_3\rightarrow \mu_1\rightarrow \mu_2\rightarrow\mu_3\rightarrow\mu_1\rightarrow \mu_2$ to the Figure \ref{fig:m2mu1mu3} and get quivers in Figure \ref{fig:D4-6mutations} listed from left to right in the first row and right to left in the second row.  
We label those quivers by $\{\mathbf Q_i\}_{i=4}^9$. 
For each quiver $\mathbf Q_i$, $i=4,\ldots,9$, we denote the input data for its GIT quotient by $(V_i,G_i,\theta_i)$. 
\iffalse
\begin{figure}[H]
    \centering
    \includegraphics[width=5.6in]{D4-6mutations.png}
    %\caption{The quiver diagrams obtained by quiver mutations.}
    \label{fig:D4-6mutations}
\end{figure}
\fi
\begin{figure}[ht]
    \centering
    \begin{tikzpicture}   
%Picture (a)
        \node[draw, circle, minimum size=0.6cm] (node-a3) at (0,0){$N_3$};
        \node[draw, circle, minimum size=0.6cm] (node-a1) at (-1.8,1){$N_2$};
        \node[draw, circle, minimum size=0.6cm] (node-a2) at (-1.8,-1){$N_1$};
        \node[draw, minimum width=0.6cm, minimum height=0.6cm] (node-a4) at (1.8,0){$N_4$};
       \draw[-stealth] (node-a3) --node[midway, above]{$A_1$} (node-a1); 
        \draw[-stealth] (node-a3) --node[midway, above]{$A_2$} (node-a2);
        \draw[-stealth] (node-a4) -- node[midway, above]{$A_3$}(node-a3);
        \node at (-1.8,0.4){$1$};
        \node at (-1.8,-0.4){$2$};
        \node at (0,-0.6){$3$};
        \node at (1.8,-0.6){$4$};
        %\node at (-0.9,0.7){$A_1$};
        %\node at (-0.9,-0.2){$A_2$};
        %\node at (0.9,0.2){$A_3$};
        \node at (0,-1.5){$(a)$};
        \node at (2.3,1){$\xrightarrow{\mu_1}$};
        %\node at (2.3,1.2){$\mu_1$};
 % Picture (b)
        \node[draw, circle, minimum size=0.6cm] (node-b3) at (5.2,0){$N_3$};
        \node[draw, circle, minimum size=0.6cm] (node-b1) at (3.4,1){$N_2'$};
        \node[draw, circle, minimum size=0.6cm] (node-b2) at (3.4,-1){$N_1$};
        \node[draw, minimum width=0.6cm, minimum height=0.6cm] (node-b4) at (7,0){$N_4$};
       \draw[-stealth] (node-b1) -- (node-b3); 
        \draw[-stealth] (node-b3) -- (node-b2);
        \draw[-stealth] (node-b4) -- (node-b3);
       % \node at (4,0.4){$1$};
       % \node at (4,-0.4){$2$};
       % \node at (6,-0.6){$3$};
       % \node at (8,-0.6){$4$};
        \node at (4.3,0.7){$A_1$};
        \node at (4.3,-0.2){$A_2$};
        \node at (6.1,0.2){$A_3$};
        \node at (5.2,-1.5){$(b)$};
        \node at (7.6,1){$\xrightarrow[]{\mu_2}$};
    % Picture (c)
        \node[draw, circle, minimum size=0.6cm] (node-c3) at (10.4,0){$N_3$};
        \node[draw, circle, minimum size=0.6cm] (node-c1) at (8.6,1){$N_2'$};
        \node[draw, circle, minimum size=0.6cm] (node-c2) at (8.6,-1){$N_1'$};
        \node[draw, minimum width=0.6cm, minimum height=0.6cm] (node-c4) at (12.2,0){$N_4$};
       \draw[-stealth] (node-c1) -- (node-c3); 
        \draw[-stealth] (node-c2) -- (node-c3);
        \draw[-stealth] (node-c4) -- (node-c3);
       % \node at (10,0.4){$1$};
        %\node at (10,-0.4){$2$};
       % \node at (12,-0.6){$3$};
       % \node at (14,-0.6){$4$};
        \node at (9.5,0.7){$A_1$};
        \node at (9.5,-0.2){$A_2$};
        \node at (11.4,0.2){$A_3$};
        \node at (10.4,-1.5){$(c)$};
        \node at (12.2, -2){$\downarrow \mu_3$};
    %Picture (d)
        \node[draw, circle, minimum size=0.6cm] (node-d3) at (10.4,-3.5){$N_3$};
        \node[draw, circle, minimum size=0.6cm] (node-d1) at (8.6,-2.5){$N_2'$};
        \node[draw, circle, minimum size=0.6cm] (node-d2) at (8.6,-4.5){$N_1'$};
        \node[draw, minimum width=0.6cm, minimum height=0.6cm] (node-d4) at (12.2,-3.5){$N_4$};
       \draw[-stealth] (node-d3) -- (node-d1); 
        \draw[-stealth] (node-d3) -- (node-d2);
        \draw[-stealth] (node-d3) -- (node-d4);
        \node at (9.5,-2.8){$A_1$};
        \node at (9.5,-3.7){$A_2$};
        \node at (11.4,-3.2){$A_3$};
        \node at (10.4,-5){$(d)$};
        \node at (7.6,-2.5){$\xleftarrow[]{\mu_1}$};
% Picture (e)
       \node[draw, circle, minimum size=0.6cm] (node-e3) at (5.2,-3.5){$N_3$};
        \node[draw, circle, minimum size=0.6cm] (node-e1) at (3.4,-2.5){$N_2$};
        \node[draw, circle, minimum size=0.6cm] (node-e2) at (3.4,-4.5){$N_1'$};
        \node[draw, minimum width=0.6cm, minimum height=0.6cm] (node-e4) at (7,-3.5){$N_4$};
       \draw[-stealth] (node-e1) -- (node-e3); 
        \draw[-stealth] (node-e3) -- (node-e2);
        \draw[-stealth] (node-e3) -- (node-e4);
        \node at (4.3,-2.8){$A_1$};
        \node at (4.3,-3.7){$A_2$};
        \node at (6.1,-3.2){$A_3$};
        \node at (5.2,-5){$(e)$};
        \node at (2.4,-2.5){$\xleftarrow[]{\mu_2}$};
% Picture (e)
       \node[draw, circle, minimum size=0.6cm] (node-f3) at (0,-3.5){$N_3$};
        \node[draw, circle, minimum size=0.6cm] (node-f1) at (-1.8,-2.5){$N_2$};
        \node[draw, circle, minimum size=0.6cm] (node-f2) at (-1.8,-4.5){$N_1$};
        \node[draw, minimum width=0.6cm, minimum height=0.6cm] (node-f4) at (1.8,-3.5){$N_4$};
       \draw[-stealth] (node-f1) -- (node-f3); 
        \draw[-stealth] (node-f2) -- (node-f3);
        \draw[-stealth] (node-f3) -- (node-f4);
        \node at (-0.9,-2.8){$A_1$};
        \node at (-0.9,-3.7){$A_2$};
        \node at (0.9,-3.2){$A_3$};
        \node at (0,-5){$(f)$};
    \end{tikzpicture}
    \caption{In the diagram, $N_2'=N_3-N_2,N_1'=N_3-N_1$. The quivers are related via the shown mutations.}
    \label{fig:D4-6mutations}
\end{figure}
\noindent
In order to construct the corresponding quiver varieties, we fix the phases of those gauge groups under the rule in Conjecture \ref{conj:phase}, which are listed in the Table \ref{table:phases}. 

We now explain how to obtain the phase $\theta_4$ from $\theta_3$ in \eqref{phase:3-} and leave the others to readers. 
We temporarily use $\tilde \sigma_i$ to represent phase of $\theta_4$ and $\sigma_i$ phase of $\theta_3$. 
According to the Conjecture \ref{conj:phase}, $\tilde\sigma_1=\sigma_1+\sigma_3,
\tilde\sigma_2=\sigma_2+\sigma_3,
\tilde\sigma_3=-\sigma_3$, since $\sigma_3<0$. 
Then we get the three inequalities for $\tilde \sigma_i$ of $\theta_4$ by substituting $\sigma_1=\tilde \sigma_1+\tilde\sigma_3$, $\sigma_2=\tilde\sigma_2+\tilde\sigma_3,\sigma_3=-\tilde\sigma_3$ to the three inequalities in \eqref{phase:3-}.
\begin{table}[H]
\centering
\begin{tabular}{ |c | c|c|} 
  \hline
  Figure& character& Phase\\
  \hline
  (a) & $\theta_4$&$\sigma_3>0$,\, $\sigma_1+\sigma_3<0$, $\sigma_2+\sigma_3<0$ \\ 
  \hline
  $(b)$& $\theta_5$&
  $\sigma_3<0,\,
  \sigma_1+\sigma_3>0,
  \sigma_1+\sigma_2+\sigma_3<0$\\
  \hline
  $(c)$& $\theta_6$& $\sigma_1+\sigma_3<0,\sigma_2+\sigma_3<0,
  \sigma_1+\sigma_2+\sigma_3>0$ \\
  \hline
  $(d)$&$\theta_7$& $\sigma_1<0, \sigma_2<0, \sigma_1+\sigma_2+\sigma_3>0$ \\
  \hline
  $(e)$&$\theta_8$& $\sigma_1>0,\,\sigma_2<0,\,\sigma_2+\sigma_3>0$ \\
  \hline
  $(f)$&$\theta_9$& $\sigma_1>0,\,\sigma_2>0,\,\sigma_3>0$ \\
  \hline
\end{tabular}
  \caption{Phases of the quivers in Figure \ref{fig:D4-6mutations}.}
  \label{table:phases}
\end{table}
We will give the semistable loci of the $G_i$ action with character $\theta_i$, which is enough to get all quiver varieties $\mc X_i=V_i\sslash_{\theta_i}G_i$. 
\begin{subequations}
    \begin{align}
         &V^{ss}_{4,\theta_4}(G_4)=\{(A_1, A_2, A_3)\big|A_1,A_2,\,\begin{bmatrix}A_1&A_2\end{bmatrix} ,A_3A_1 ,A_3A_2 \,\, \text{all non-degenerate}\}\label{subeqn:semQ4} \\
        & V^{ss}_{5,\theta_5}=\{(A_1,A_2,A_3)\big| A_1,A_2, \begin{bmatrix}A_1\\A_3 \end{bmatrix}, A_1A_2,A_3A_2 \text{ all non-degenerate}\} \label{subeqn:semQ5} \\
        &V^{ss}_{6,\theta_6}=\{(A_1,A_2,A_3)\big| A_1,A_2, \begin{bmatrix}A_1\\A_2 \end{bmatrix}, \begin{bmatrix}A_2\\A_3 \end{bmatrix}, \begin{bmatrix}A_1\\A_3 \end{bmatrix} \text{ non-degenerate}\}.\label{subeqn:semQ6}
        \\
        & V^{ss}_{i,\theta_i}=\{(A_1,A_2,A_3) \big| A_1,A_2,A_3 \text{ non-degenerate }\} \,i=7,\,8,\,9\, \label{subeqn:semQ789}.
    \end{align}
\end{subequations}
See Appendix \ref{sec:Appsemistablelocus} for proofs of those semistable locus, which are elementary. 

One can find that the quiver variety $\mc X_9$ is exactly the same with the $D_3$ quiver variety $\mc X_0$ by exchanging the nodes $1$ and $2$.
\end{ex}
There are five more quivers that are mutation equivalent to the $D_3$-quiver. We display them in Figure \ref{fig:extrafig} and Figure \ref{fig:extrafig2}.
\iffalse
\begin{figure}[H]
    \centering
    \includegraphics[width=5in]{extrafig.png}
    \caption{Performing the quiver mutation $\mu_2$ to the quiver in Figure \ref{fig:mu3}, we can get the $(1)$. The quivers $(2),(3)$ are obtained by quiver mutations shown in 
    the Figure. Performing $\mu_1$ to the $(2)$ we get the  Figure \ref{fig:D4-6mutations} (b). Performing $\mu_1$ and $\mu_3$ to $(3)$ we get quivers in Figure \ref{fig:D4-6mutations} (d) and Figure \ref{fig:star1} respectively. In these quivers, superpotentials are sum of traces of all cycles. }
    \label{fig:extrafig}
\end{figure}
\fi
\begin{figure}[H]
    \centering
    \begin{tikzpicture}   
%Picture (a)
        \node[draw, circle, minimum size=0.6cm] (node-a3) at (0,0){$N_3'$};
        \node[draw, circle, minimum size=0.6cm] (node-a1) at (-1.8,1){$N_1$};
        \node[draw, circle, minimum size=0.6cm] (node-a2) at (-1.8,-1){$N_1$};
        \node[draw, minimum width=0.6cm, minimum height=0.6cm] (node-a4) at (1.8,0){$N_4$};
       \draw[-stealth] (node-a3) -- (node-a1); 
        \draw[-stealth] (node-a2) -- (node-a3);
        \draw[-stealth] (node-a1)  to [bend left] (node-a4);
        \draw[-stealth] (node-a4)  to [bend left] (node-a2);
        \node at (-1.8,0.4){$1$};
        \node at (-1.8,-0.4){$2$};
        \node at (0,-0.6){$3$};
        \node at (1.8,-0.6){$4$};
        %\node at (-0.9,0.7){$A_1$};
        %\node at (-0.9,-0.2){$A_2$};
        %\node at (0.9,0.2){$A_3$};
        \node at (0,-1.6){$(1)$};
        \node at (2.3,1){$\xrightarrow{\mu_3}$};
        %\node at (2.3,1.2){$\mu_1$};
 % Picture (b)
        \node[draw, circle, minimum size=0.6cm] (node-b3) at (5.2,0){$N_2'$};
        \node[draw, circle, minimum size=0.6cm] (node-b1) at (3.4,1){$N_1$};
        \node[draw, circle, minimum size=0.6cm] (node-b2) at (3.4,-1){$N_1$};
        \node[draw, minimum width=0.6cm, minimum height=0.6cm] (node-b4) at (7,0){$N_4$};
       \draw[-stealth] (node-b1) -- (node-b3); 
        \draw[-stealth] (node-b3) -- (node-b2);
        \draw[-stealth] (node-b2)--  (node-b1);
        \draw[-stealth] (node-b1)  to [bend left] (node-b4);
         \draw[-stealth] (node-b4)  to [bend left] (node-b2);
        \node at (3.6,0.4){$1$};
        \node at (3.6,-0.4){$2$};
        \node at (5.2,-0.6){$3$};
       % \node at (8,-0.6){$4$};
        %\node at (4.1,0.7){$A_1$};
        %\node at (4.1,-0.2){$A_2$};
        %\node at (5.9,0.2){$A_3$};
        \node at (5.2,-1.6){$(2)$};
        \node at (7.6,1){$\xrightarrow[]{\mu_2}$};
    % Picture (c)
        \node[draw, circle, minimum size=0.6cm] (node-c3) at (10.4,0){$N_2'$};
        \node[draw, circle, minimum size=0.6cm] (node-c1) at (8.6,1){$N_1$};
        \node[draw, circle, minimum size=0.6cm] (node-c2) at (8.6,-1){$N_3$};
        \node[draw, minimum width=0.6cm, minimum height=0.6cm] (node-c4) at (12.2,0){$N_4$};
       \draw[-stealth] (node-c1) -- (node-c2); 
        \draw[-stealth] (node-c2) -- (node-c3);
        \draw[-stealth] (node-c2)  to [bend right] (node-c4);
        \node at (8.8,0.4){$1$};
        \node at (8.8,-0.4){$2$};
        \node at (10.4,-0.6){$3$};
       % \node at (14,-0.6){$4$};
       % \node at (9.1,0.7){$A_1$};
       % \node at (9.1,-0.2){$A_2$};
       % \node at (10.9,0.2){$A_3$};
        \node at (10.4,-1.6){$(3)$};
    \end{tikzpicture}
    \caption{$N_3'=N_4-N_3, N_2'=N_3-N_2$.
    Performing the quiver mutation $\mu_2$ to the quiver in Figure \ref{fig:mu3}, we can get the $(1)$. The quivers $(2),(3)$ are obtained by quiver mutations shown in 
    the Figure. Performing $\mu_1$ to the $(2)$ we get the  Figure \ref{fig:D4-6mutations} (b). Performing $\mu_1$ and $\mu_3$ to $(3)$ we get quivers in Figure \ref{fig:D4-6mutations} (d) and Figure \ref{fig:star1} respectively. In these quivers, superpotentials are sum of traces of all cycles. }
    \label{fig:extrafig}
\end{figure}
\noindent
\iffalse
\begin{figure}[H]
    \centering
    \includegraphics[width=3.6in]{extrafig2.png}
    \caption{ Figure $(4)$ is obtained by performing $\mu_3$ to the quiver in Figure \ref{fig:m1m3}. Performing $\mu_1$ we get the quivers $(e)$ in Figure \ref{fig:D4-6mutations}. Performing 
    Performing $\mu_3$ to the quiver $(5)$, we get the quiver $(a)$ in Figure \ref{fig:D4-6mutations} by relabeling nodes.
    For the quiver $(4)$, the superpotential is $W=tr(A_2A_1C)+tr(B_2B_1C)$. 
    }
    \label{fig:extrafig2}
\end{figure}
\fi
\begin{figure}[H]
    \centering
    \begin{tikzpicture}   
 % Picture (4)
        \node[draw, circle, minimum size=0.6cm] (node-b3) at (5.2,0){$N_1'$};
        \node[draw, circle, minimum size=0.6cm] (node-b1) at (3.4,1){$N_2$};
        \node[draw, circle, minimum size=0.6cm] (node-b2) at (3.4,-1){$N_2$};
        \node[draw, minimum width=0.6cm, minimum height=0.6cm] (node-b4) at (7,0){$N_4$};
       \draw[-stealth] (node-b3) -- node[midway, above] {$B_1$} (node-b1); 
        \draw[-stealth] (node-b2) --node[midway, above] {$B_2$} (node-b3);
        \draw[-stealth] (node-b1)--  node[midway, left] {$C$}(node-b2);
        \draw[-stealth] (node-b2)  to [bend right]  (node-b4);
         \draw[-stealth] (node-b4)  to [bend right]  node[midway, above] {$A_1$} (node-b1);
        \node at (3.6,0.4){$1$};
        \node at (3.6,-0.4){$2$};
        \node at (5.2,-0.6){$3$};
       % \node at (8,-0.6){$4$};
        %\node at (4.1,0.7){$A_1$};
        \node at (5.6,-1.2){$A_2$};
        %\node at (5.9,0.2){$A_3$};
        \node at (5.2,-1.7){$(4)$};
        \node at (8,1){$\xrightarrow[]{\mu_2}$};
    % Picture (5)
        \node[draw, circle, minimum size=0.6cm] (node-c3) at (11,0){$N_1'$};
        \node[draw, circle, minimum size=0.6cm] (node-c1) at (9.2,1){$N_2$};
        \node[draw, circle, minimum size=0.6cm] (node-c2) at (9.2,-1){$N_3$};
        \node[draw, minimum width=0.6cm, minimum height=0.6cm] (node-c4) at (12.8,0){$N_4$};
       \draw[-stealth] (node-c2) --node[midway, left] {$C$} (node-c1); 
        \draw[-stealth] (node-c3) -- node[midway, above] {$B_2$}(node-c2);
        \draw[-stealth] (node-c4)  to [bend left] (node-c2);
        \node at (9.4,0.4){$1$};
        \node at (9.4,-0.4){$2$};
        \node at (11,-0.6){$3$};
        \node at (11.4,-1.2){$A_2$};
       % \node at (14,-0.6){$4$};
       % \node at (9.1,0.7){$A_1$};
       % \node at (9.1,-0.2){$A_2$};
       % \node at (10.9,0.2){$A_3$};
        \node at (11,-1.7){$(5)$};
    \end{tikzpicture}
    \caption{ In the diagram, $N_1'=N_3-N_1$. Figure $(4)$ is obtained by performing $\mu_3$ to the quiver in Figure \ref{fig:m1m3}. Performing $\mu_1$ we get the quivers $(e)$ in Figure \ref{fig:D4-6mutations}. Performing 
    Performing $\mu_3$ to the quiver $(5)$, we get the quiver $(a)$ in Figure \ref{fig:D4-6mutations} by relabeling nodes.
    For the quiver $(4)$, the superpotential is $W=tr(A_2A_1C)+tr(B_2B_1C)$. 
    }
    \label{fig:extrafig2}
\end{figure}
We say two quivers are the same if they are the same via permuting the order of nodes. One can check that performing quiver mutation to any quiver in $\Omega$, one gets a quiver in $\Omega$ up to a permutation of nodes. 

Notice that the Figure \ref{fig:extrafig} $(1)$ is similar with the Figure \ref{fig:m1m3} by switching the $N_1$ and $N_2$, so one can find the corresponding variety by mimicking the Example \ref{ex:m1m3}. 
The Figure \ref{fig:extrafig} $(2)$ and Figure \ref{fig:extrafig2} $(4)$ are similar, so we will only write down the quiver variety of \ref{fig:extrafig2} $(4)$ in detail. 
The Figure \ref{fig:extrafig} $(3)$ is similar to the Figure \ref{fig:D4-6mutations} $(e)$ and the Figure \ref{fig:extrafig2} $(5)$ is similar to Figure \ref{fig:D4-6mutations} $(b)$ by switching the $N_1$ and $N_2$. We denote the quiver in Figure \ref{fig:extrafig2} $(5)$ by $\mathbf Q_{11}$, and the corresponding input data of GIT quotient by $(V_{11}, G_{11},\theta_{11})$. 
\begin{ex}\label{ex:Q10}
    In this example, we will find the variety of the quiver with superpotential $\mathbf Q_{10}$ in Figure \ref{fig:extrafig2} $(4)$. 
    Denote the input data of the GIT quotient by $(V_{10}, G_{10},\theta_{10})$. 
    The critical locus $Z_{10}=\{dW_{10}=0\}$
    of the superpotential $W_{10}=tr(A_2A_1C)+tr(B_2B_1C)$ is defined by the following equations
    \begin{align}\label{dW10=0}
        &A_2A_1+B_2B_1=0, \nonumber\\
        &CA_2=0,\,A_1C=0,\, B_1C=0,\,CB_2=0.
    \end{align}
    Choose character $\theta_{10}$ with
\begin{align}\label{eqn:phaseforQ10}
   \sigma_2>0,\,
    \sigma_3>0,\, 
    \sigma_1+\sigma_3<0.
    \end{align}
 One can find the above phase satisfies the relation in Conjecture \ref{conj:phase} with phase of character $\theta_2$ in
\eqref{eqn:Q2phase}.
    Then in this phase, the semistable locus is 
    \begin{align}
        Z_{10,\theta_{10}}^{ss}(G_{10})=\{C=0,A_2A_1+B_2B_1=0\big|\, B_1,A_1, A_2\text{ non-degenerate }\}.
    \end{align}
\noindent
    See Lemma \ref{lem:Q10semapp} for a proof of this semistable locus. 
    Consider another quiver obtained by deleting the arrow $1\rightarrow 2$ in Figure \ref{fig:extrafig2} (4). 
    We denote this new quiver by $\tilde{\mathbf Q}_{10}$ and the corresponding input data for GIT quotient by $(\tilde V_{10}, G_{10},\theta_{10})$ 
    where $\theta_{10}$ has the same phase with \eqref{eqn:phaseforQ10}.
    Denote the quiver variety by $\mc X_{10}$. 
    Then the critical locus $\mathcal Z_{10}$ can be viewed as a subvariety of $\mc X_{10}$ defined by $\mc Z_{10}=\{A_2A_1+B_2B_1=0\}\sslash_{\theta_{10}}G_{10}$.
\end{ex}
The phase of $\theta_{11}(g)=\prod_{i=1}^3\det(g_i)^{\sigma_i
}$ is as follows,
\begin{align}
    \sigma_2<0,\,
    \sigma_2+\sigma_3>0,\,
    \sigma_1+\sigma_2+\sigma_3<0,
\end{align}
according to the Conjecture \ref{conj:phase}. 
One can find the phase of $\theta_{11}$ is similar with that of $\theta_{5}$.
In this phase, the semistable locus is 
\begin{equation}
    V^{ss}_{11,\theta_{11}}=
    \{(A_2,B_2,C)\big|\, \begin{bmatrix}A_2\\ B_2 \end{bmatrix}, A_2C,\,B_2C, \,B_2,\,C \text{ nondegenerate}\}.
\end{equation}
\iffalse
\begin{rem}
     One may find due to the condition $N_1+N_2=N_4$, there are more than one phases for each quiver $\mathbf Q_i$. 
     In the Appendix \ref{sec:appendixB}, we give a second choice of phase for each quiver $\mathbf Q_i$, and the corresponding semistable locus for the GIT quotient. We denote the character by $\theta_i^-$. 
     For any two quivers $\mathbf Q_i,\mathbf Q_j$ that are related by a quiver mutation, one can check that the given characters $\theta_i^-$ and $\theta_j^-$ match the relation in the Conjecture \ref{conj:phase}. 
\end{rem}
\fi
\section{Gromov-Witten Invariants and Wall-Crossing Theorem}\label{Sec:GW}
We will introduce the GW theory and the wall-crossing theorem. Readers who are familiar with related materials can skip this section.
\subsection{Gromov-Witten invariants}
We refer to the beautiful book \cite{GW:mirrorsym} about the fundamental properties of GW theory. 
\begin{dfn}
Let $\mc X$ be a smooth projective variety.
A stable map to $\mc X$ denoted by $(C, p_1,\ldots,p_n;f)$ consists of the following data:
\begin{enumerate}[(a)]
    \item
    a nodal curve $(C,p_1,\ldots,p_n)$ with $n\geq 0$ distinct nonsingular markings, 
    \item a \textit{stable map} $f:(C,p_1,\ldots,p_n)\rightarrow \mc X$ such that every component of $C$ of genus 0, which is contracted by $f$, must have at least three special (marked or singular) points, and every component of $C$ of genus one which is contracted by $f$, must have at least one special point. 
    \end{enumerate}
\end{dfn}
The degree of a stable map $(C, p_1,\ldots, p_n; f)$ is defined as the homology class of the image $\beta=f_*[C]$.
For a fixed curve class $\beta\in H_2(\mc X, \Z)$, let $\overline{M}_{g,n}(\mc X, \beta)$ denote the stack of stable maps from $n$-marked and genus-g curves $C$ to $\mc X$ such that $f_*[C]=\beta$.
When $\mc X$ is projective, $\overline{M}_{g,n}(\mc X, \beta)$ is a proper separated DM stack and admits a perfect obstruction theory. Hence we can construct the virtual fundamental class $[\overline{M}_{g,n}(\mc X, \beta)]^{vir}\in A_{\text{vdim}}(\overline{M}_{g,n}(\mc X, \beta))$ where $\text{vdim}=\int_{\beta}c_1(X)+(\dim(\mc X)-3)(1-g)+n$. See  \cite{GW:LT,GW:BFintrinsic,GW:B}.

Let
$\pi:\mc C_{g,n}\rightarrow\overline{M}_{g,n}(\mc X,\beta)\,,$ be the universal curve 
and $s_i$ are sections of $\pi$ for each marking $p_i$. 
Let $\omega_\pi$ be the relative dualizing sheaf and $\mc P_i=s_i^*(\omega_\pi)$ be the cotangent bundle at the $i$-th marking.
Define the $\psi$-class by $\psi_i:=c_1(\mc P_i) \in H^2(\overline M_{g,n}(\mc X,\beta))$. 
Define evaluation maps by
\begin{align}
   ev_i:\overline M_{g,n}(\mc X,\beta)&\longmapsto \mc X\nonumber\\
   (C, p_1,\ldots,p_n;f)&\longmapsto f(p_i)\,.
\end{align}
Let $\gamma_1,\ldots,\gamma_n\in H^*(\mc X)$ be cohomology classes and $a_i$ $i=1,\ldots,n$ be positive integers. The GW invariant is defined as
\begin{equation}\label{eqn:GWinv}  \langle\tau_{a_1}\gamma_1,\ldots,\tau_{a_n}\gamma_n \rangle_{g,n,\beta}:=\int_{[\overline M_{g,n}(\mc X, \beta)]^{vir}}\prod_{i=1}^{n}\psi_i^{a_i}ev_i^*(\gamma_i)\,.
\end{equation}
Let $\alpha_0=1,\alpha_1,\ldots, \alpha_m\in H^*(\mc X)$ be a set of generators of cohomology group, and $\alpha^0,\alpha^1,\ldots, \alpha^m\in H^*(\mc X)$ be the Poincar\'e dual. 
The small $\mc J$-function of $\mc X$, which comprises genus-zero GW invariants, is defined by
\begin{equation}\label{eqn:GWJ}
    \mc J^{\mc X}(Q,\mathbf{t},u)=
  \sum_{i=0}^{m} \sum_{(k\geq 0,\beta )} \alpha^i \langle \frac{\alpha_i}{u(u-\psi_{\bullet})}\mathbf{t}\ldots \mathbf{t}\rangle_{0,k+1,\beta}\frac{Q^{\beta}}{k!} \,.
\end{equation}
where $\mathbf{t}\in H^{\leq 2}(\mc X)$.

When $\mc X$ admits a torus action, denoted by $S$, then $S$ induces an action on $\overline M_{g,n}(\mc X, \beta)$ by sending a stable map $(C, p_1,\ldots, p_n;\,f)$ to $(C, p_1,\ldots, p_n;\, s\circ f)$ for each $s\in S$. 
Let $F$ denote a torus fixed locus of $\overline M_{g,n}(\mc X, \beta)$. 
There is an induced equivariant perfect obstruction theory on $\overline M_{g,n}(\mc X, \beta)$, hence the equivariant virtual fundamental class. 
Let $H^*_S(\mc X):=H^*(\mc X\times_G EG)$ be equivariant cohomology group of $\mc X$. 
For $\omega_i\in H^*_S(\mc X)$, the equivariant GW invariants are defined via the virtual localization theorem as follows,
\begin{equation}     \langle\tau_{a_1}\omega_1,\ldots,\tau_{a_n}\omega_n \rangle_{g,n,\beta}^S:=\sum_{F}\int_{F}  \frac{i_F^*\left(\prod_{i=1}^{n}\psi_i^{a_i}ev_i^*(\omega_i)\right)}{e^S(N^{vir}_F)}\,.
\end{equation}
The summation is over all torus fixed locus $F$, the map $i_F: F\rightarrow \overline M_{g,n}(\mc X, \beta)$ is the embedding, and $N_F^{vir}$ is the virtual normal bundle of $F$.
Suppose $\mc X$ is projective and $\gamma_i's$ are the non-equivariant limit of $\omega_i's$ via the map $H^*_S(\mc X)\rightarrow H^*(\mc X)$, and then
the nonequivariant limit of $\langle\tau_{d_1}\omega_1,\ldots,\tau_{d_n}\omega_n \rangle_{g,n,\beta}^S$ is equal to the regular GW invariant $\langle\tau_{d_1}\gamma_1,\ldots,\tau_{d_n}\gamma_n \rangle_{g,n,\beta}$.
See \cite{GW:GPequiv}.

Similarly, we can define the equivariant small $\mc J$-function of $\mc X$ by changing each correlator in \eqref{eqn:GWJ} to the equivariant version. We denote the equivariant small $\mc J$ function by $\mc J^{\mc X, S}( Q,\mathbf{t},  u)$.
\subsection{Genus-zero wall-crossing theorem}
In this subsection, we introduce the genus-zero wall-crossing theorem in the context of Cheong, Ciocan-Fontanine, Kim, and Maulik  \cite{MR3126932,MR3412343,MR3272909,MR3586512}.  
We only involve necessary parts for our purpose. 

Fix a valid input data for a GIT quotient $(V, G, \theta)$, and denote the corresponding GIT quotient by $\mc X:=V\sslash_\theta G$.
\begin{dfn}\label{dfn:stablequasigraphmap}
A quasimap from $\mathbb P^1$ to $V\sslash_\theta G$ consists of the data $(P,\sigma)$ where
\begin{itemize}
\item $P$ is a principle $G$-bundle on $\mbb P^1$,
\item $\sigma$ is a section of the induced bundle $P\times_G V$ with the fiber $V$ on $\mbb P^1$.
\end{itemize}
The class of a quasimap is defined as $\beta\in \op{Hom}(\op{Pic}^G(V), \Z)$, such that for each line bundle $L\in \op{Pic}^G(V)$, 
\begin{equation}
    \beta(L)=\deg_{\mbb P^1}(\sigma^*(P\times_G L))\,.
\end{equation}
\end{dfn}
\begin{dfn}\label{dfn:effectiveclass}
An element
$\beta\in \op{Hom}(\op{Pic}^G(V),\Z)$
is called an $I$-effective class if it is the class of a quasimap from $\mbb P^1$ to $V\sslash_\theta G$. Denote the semigroup of $I$-effective classes by $\op{Eff}(V, G, \theta)$.
\end{dfn}
\begin{dfn}
A quasimap $(P, \sigma)$ from $\mbb P^1$ to $V\sslash_\theta G$ is stable if
\begin{enumerate}
    \item the set $B:=\sigma^{-1}(V^{us})\subset \mbb P^1$ is finite, and points in $B$ are called base points of the quasimap,
    \item $\mathbf L_\theta:=\sigma^*(P\times_GL_\theta)$ is ample, where $L_{\theta}=V\times \C_\theta$.
\end{enumerate}
\end{dfn}
Denote the moduli stack of all stable qusimaps from $\mathbb P^1$ to $V\sslash_\theta G$ of class $\beta$ as $QG_{\beta}(V\sslash G)$. This moduli stack is the so-called stable quasimap graph space in \cite{MR3126932}.
\begin{thm}[\cite{MR3126932}]\label{thm:quasimappot}
The stack $QG_{\beta}(V\sslash_\theta G)$ is a separated Deligne-Mumford stack of finite type, proper over the affine quotient $Spec(H^0(V, \mc O_V)^G)$. It admits a canonical perfect obstruction theory if $V$ has at most lci singularities.
\end{thm}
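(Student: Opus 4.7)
The plan is to decompose the statement into its four constituent claims---representability as a Deligne-Mumford stack, finite type, properness over the affine quotient, and existence of a canonical perfect obstruction theory---and address each following the strategy of Ciocan-Fontanine--Kim (indeed the theorem is cited from that paper, and the proposal below is essentially a reconstruction of their argument). A uniform observation streamlining everything is that a quasimap $(P,\sigma)$ of class $\beta$ is equivalent to a morphism $\phi:\mathbb{P}^1\to [V/G]$ whose induced pairing against $\mathrm{Pic}^G(V)$ equals $\beta$; stability then says that $\phi^{-1}([V^{us}/G])$ is finite and that $\phi^{*}\mathbf{L}_\theta$ is ample.

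First, I would establish representability by a Deligne--Mumford stack of finite type. The moduli of maps from $\mathbb{P}^1$ to the quotient stack $[V/G]$ is an Artin stack locally of finite type; the substack defined by the base-point and ampleness conditions is open. Finiteness of automorphisms follows from the stability condition: because $\sigma$ is generically valued in $V^{s}$ and $G$ acts freely there, any automorphism of $P$ fixing $\sigma$ is trivial on a dense open of $\mathbb{P}^1$, hence trivial on $P$. Boundedness---and therefore finite type---follows from fixing the class $\beta$, which pins down the degrees $\deg_{\mathbb{P}^1}\sigma^{*}(P\times_G L_\theta)$ for a finite generating set of characters, and hence the Hilbert polynomial of the graph $\Gamma_\sigma\subset \mathbb{P}^1\times V^{ss}$. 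Next, for properness over $\mathrm{Spec}(H^0(V,\mathcal{O}_V)^G)$, I would use the valuative criterion. Separatedness reduces to the uniqueness of the extension of a stable quasimap from the generic fiber of a DVR, which is handled by a standard Hartogs-type argument once one knows the base-point locus has codimension at least two in $\mathbb{P}^1_R$. For existence, starting from a stable quasimap over the generic point, the principal $G$-bundle extends since $\mathbb{P}^1_R$ is regular and two-dimensional, and then the section extends away from a codimension-two locus and extends across that locus by normality combined with the affine map to the quotient; stability of the extension follows because the ampleness of $\phi^{*}\mathbf{L}_\theta$ is an open condition on the target of the proper map to the affine quotient.

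Finally, for the perfect obstruction theory I would invoke the general machine: the moduli of maps $\phi:\mathbb{P}^1\to \mathfrak{X}$ into an Artin stack $\mathfrak{X}=[V/G]$ carries a relative obstruction theory given by the two-term complex $R\pi_{*}\phi^{*}\mathbb{T}_{\mathfrak{X}}$, provided $\mathbb{T}_{\mathfrak{X}}$ has amplitude in $[-1,1]$ and the correct duality. Here $\mathbb{T}_{[V/G]}$ is computed from the anchor $\mathfrak{g}\otimes \mathcal{O}_V\to T_V$ for $V$ smooth, and more generally from $\mathrm{Cone}(\mathfrak{g}\otimes \mathcal{O}_V\to \mathbb{T}_V)$ where $\mathbb{T}_V=\mathbb{L}_V^{\vee}$; for $V$ with at most lci singularities, $\mathbb{L}_V$ is perfect of amplitude $[-1,0]$, so the two-term complex $[\mathfrak{g}\otimes \mathcal{O}\to \mathbb{T}_V]$ is perfect of amplitude $[0,1]$, pulling back to $\mathbb{P}^1$ and pushing forward gives a perfect two-term complex on $QG_\beta$. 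The hard step, and the one I would expect to take the most care, is precisely this last point: verifying that the obstruction theory obtained from $R\pi_{*}\phi^{*}\mathbb{T}_{[V/G]}$ is perfect when $V$ is only lci rather than smooth. This requires a careful handling of the cotangent complex of $V$, together with the base-change and coherence properties needed to ensure that $R\pi_{*}$ of a perfect complex on the universal curve remains perfect on the moduli stack; here one uses that $\pi$ is a flat proper family of curves of genus zero, so the pushforward preserves perfectness in the relevant range of amplitude, and that the map to $\mathbb{T}_{B}$ from the pullback does produce an obstruction theory in the sense of Behrend--Fantechi.
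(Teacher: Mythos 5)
This theorem is not proved in the paper at all: it is imported verbatim from the cited reference \cite{MR3126932} (Ciocan-Fontanine--Kim) as background for the wall-crossing setup, so there is no in-paper argument to compare yours against. Judged as a reconstruction of the cited proof, your decomposition and overall strategy (maps to $[V/G]$, openness of stability, finite automorphisms from the free action on $V^s$, boundedness from fixing $\beta$, valuative criterion over the affine quotient, obstruction theory from $R\pi_*\phi^*\mathbb{T}_{[V/G]}$) are the right ones. However, two of your steps paper over exactly the points where the real work lies.

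First, in the existence half of the valuative criterion, extending the principal bundle over $\mathbb{P}^1_R$ and then extending the section by Hartogs is not enough: the resulting section over the closed fiber may land \emph{entirely} in the unstable locus, which is a closed condition, not an open one, on that fiber --- so "ampleness is an open condition" does not rescue stability of the naive extension. One must modify the extension (elementary modifications of the bundle along the special fiber, controlled by the fixed class $\beta$ and the positivity of $\beta(L_\theta)$) to produce a limit whose base locus is finite; this is the heart of the properness proof in the reference and is missing from your sketch. Second, for the perfect obstruction theory, the claim that "$R\pi_*$ preserves perfectness in the relevant range of amplitude" is false as stated: for a family of curves, $R\pi_*$ of a complex of amplitude $[0,1]$ a priori has amplitude $[0,2]$, so you would get a three-term complex. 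The reduction to a two-term (perfect, amplitude $[-1,0]$) obstruction theory uses that the offending cohomology sheaves of $\phi^*\mathbb{T}_{[V/G]}$ --- the infinitesimal stabilizers and the obstruction sheaf of $V$ --- are supported on the finite base locus $B=\sigma^{-1}(V^{us})$, because $V^{ss}=V^s$ is nonsingular and $G$ acts freely there; sheaves supported at finitely many points of the fibers have vanishing $R^1\pi_*$. Without this support argument the perfectness claim does not follow, and this is also where the lci hypothesis on $V$ actually enters.
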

Let $[\zeta_0,\zeta_1]$ be homogeneous coordinates on $\mathbb P^1$, and it has a standard $\C^*$ action given by 
\begin{equation}
    t[\zeta_0,\zeta_1]=[t\zeta_0,\zeta_1], t\in \C^* \,.
\end{equation}
The $\C^*$-action on $\mbb P^1$ induces an action on $QG_{\beta}(V\sslash_\theta G)$. If a quasimap 
$(P,\sigma)\in QG_{\beta}(V\sslash_\theta G)$ is $\C^*$-fixed, then all base points and the entire degree $\beta$ must be supported over the torus fixed points $[0:1]$ or $[1:0]$.

Consider the $\C^*$-fixed locus $F_\beta$ where everything is supported over the point $[0:1]\in \mbb P^1$ and the map $ev_\bullet: \mathbb P^1\backslash \{[0:1]\}\rightarrow V\sslash_\theta G$ is constant. 

 \begin{dfn}\label{dfn:smallIfunction}
Define the quasimap small $I$-function of a projective GIT quotient $V\sslash_\theta G$ as
 \begin{equation}
     I^{V\sslash_\theta G}(q,u)=1+\sum_{\beta\neq 0}q^{\beta} I_{\beta}^{V\sslash_\theta G}(u) \,\,, I_{\beta}^{V\sslash_\theta G}(u)=(ev_{\bullet})_*\left( \frac{[F_{\beta}]^{vir}}{e^{\C^*}(N_{F_{\beta}}^{vir})}  \right)\,,
 \end{equation}
 where the sum is over all $I$-effective classes of $(V, G,\theta)$.
 \end{dfn}

Assume $V\sslash_\theta G$ is projective, and $V$ admits a torus action ${S}$ which commutes with the action of $G$ on $V$. Hence the $S$ acts on $V\sslash_\theta G$.
The torus action is good if the torus fixed locus  $(V\sslash_\theta G)^S$ is a finite set.
There is an induced action of $S$ on $QG_\beta(V\sslash_\theta G)$ by sending $(P,u)\in QG_\beta(V\sslash_\theta G)$ to $s\circ u$ for each $s\in S$. 
 Moreover, the perfect obstruction theory is canonical $S$-equivariant \cite{MR3126932}. 
The same formula defines the equivariant quasimap small $I$-function of $V\sslash_\theta G$ as Definition \ref{dfn:smallIfunction} with all characteristic classes and pushforwards replaced by the equivariant version. 
We denote the equivariant quasimap small $I$-function of $V\sslash_\theta G$ by $I^{V\sslash_\theta G, S}(q, z)$.
\begin{thm}[\cite{MR3272909}]\label{thm:wallcrossing}
Assume $V\sslash_\theta G$ is a (quasi-)projective variety with a good torus action, and $V$ admits at most lci singularities. Then the following (equivariant) wall-crossing formula holds when $(V,G,\theta)$ is semi-positive,
\begin{equation}
    \mc J^{V\sslash_\theta G, S}( q, \mbf{t},u)=\frac{I^{V\sslash_\theta G,S}(q, u)}{I_0(q)}\,,
\end{equation} via  mirror map
$\mbf{t}=\frac{I_1(q)}{I_0(q)} \in H^{\leq 2}(V\sslash G)$,
where the $I_0(q)$, $I_1(q)$ are defined as coefficients of $1$ and $u^{-1}$ in the following expansion,
\begin{equation}
    I^{V\sslash_\theta G,S}(q, u)=I_0(q)+\frac{I_1}{ u}(q)+O(\frac{1}{ u^2})\,.
\end{equation}
\end{thm}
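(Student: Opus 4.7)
The plan is to prove the wall-crossing formula by $\C^*$-equivariant virtual localization on the graph quasimap space, in the style of Ciocan-Fontanine and Kim. The main object is the stack $QG_{0,0,\beta}(V\sslash_\theta G)$ of stable quasimaps from $\mathbb P^1$, which carries a canonical perfect obstruction theory (Theorem \ref{thm:quasimappot}) and inherits a $\C^*$-action from the scaling action on $\mathbb P^1$. The entire proof is orchestrated by applying virtual localization to this stack and reorganizing the fixed-locus contributions until one side reads off as $I^{V\sslash_\theta G,S}$ and the other as a mirror-transformed $\mc J^{V\sslash_\theta G,S}$.

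The first key step is to classify the $\C^*$-fixed components of $QG_{0,0,\beta}$. A $\C^*$-fixed stable quasimap must concentrate all its base points and nontrivial degree over the two torus-fixed points $0=[0:1]$ and $\infty=[1:0]$, with bubble trees of stable maps glued on. This produces a stratification indexed by splittings $\beta=\beta_0+\beta_\infty$ and marking data on both sides. The component $F_\beta$ from Definition \ref{dfn:smallIfunction}, where everything is supported over $[0:1]$ and the quasimap is constant on $\mathbb P^1\setminus\{[0:1]\}$, contributes (after push-forward by $ev_\bullet$) exactly $I_\beta^{V\sslash_\theta G,S}(u)$, with the equivariant parameter of the $\C^*$-action playing the role of $u$. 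The strata supported over $[1:0]$ are fibered over $\overline M_{0,k+1}(V\sslash_\theta G,\beta_\infty)$, and their contributions assemble into a $\mc J$-function evaluated on descendants, glued to $I$-type vertex factors at the node connecting the $\infty$-bubble to the pure-base $\mathbb P^1$-piece over $[0:1]$.

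The second step is to compare the two sides. Applying virtual localization to a suitable tautological class and summing over $\beta$ yields a master identity
\begin{equation}
I^{V\sslash_\theta G,S}(q,u) \;=\; I_0(q)\cdot \mc J^{V\sslash_\theta G,S}\!\left(q,\tfrac{I_1(q)}{I_0(q)},u\right) \;+\; (\text{terms canceled by string/dilaton}),
\end{equation}
in which the ``vertex factor'' picked up at the node between the $\infty$-bubble and the over-$[0:1]$ piece is precisely what produces the mirror map insertion $\mathbf t=I_1/I_0$. The semi-positivity hypothesis on $(V,G,\theta)$ guarantees that $I$ admits the asymptotic expansion $I_0+I_1/u+O(u^{-2})$ with $I_1\in H^{\leq 2}(V\sslash_\theta G)$; without it, $\mathbf t$ would require classes of arbitrary degree and one would be forced into the big $\mc J$-function. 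Under the Fano-index-$\geq 2$ hypothesis, a direct virtual-dimension count on $F_\beta$ shows that $(ev_\bullet)_*[F_\beta]^{vir}/e^{\C^*}(N_{F_\beta}^{vir})$ lives in cohomological degree $\geq 2$ for every $\beta\neq 0$, forcing $I_0=1$ and $I_1=0$ and reducing the mirror map to the identity.

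The main obstacle is the precise match of the virtual normal bundles at the two families of fixed loci. The over-$[1:0]$ strata are glued from a stable-map moduli and a pure-base quasimap piece along a node, and one needs to compare the induced obstruction theory on this product with the restricted obstruction theory of $QG_{0,0,\beta}$; this involves node-smoothing terms and relative cotangent lines that must exactly balance to produce the $1/(u-\psi_\bullet)$ appearing in the definition of $\mc J$. The second subtle point is controlling unstable strata, which is handled by the string and dilaton equations together with the stability inequality on $\mathbf L_\theta$; these collapse auxiliary insertions and justify ignoring the parenthesized remainder above. Once those comparisons are carried out, the stated equivariant wall-crossing formula and its Fano specialization follow.
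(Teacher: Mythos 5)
This theorem is quoted from \cite{MR3272909} and the paper offers no proof of its own, so there is nothing internal to compare against; your sketch must be measured against the actual argument of Ciocan-Fontanine--Kim. Measured that way, it has a genuine gap at its core. You propose to obtain both sides of the identity from a single $\C^*$-localization on the graph space $QG_{0,0,\beta}(V\sslash_\theta G)$, with the strata over $[1:0]$ ``fibered over $\overline M_{0,k+1}(V\sslash_\theta G,\beta_\infty)$.'' But $QG_{0,0,\beta}$ is the $\epsilon=0+$ graph space: its $\C^*$-fixed rational tails over $[1:0]$ carry \emph{quasimaps} with base points, not stable maps, so localization on this one space never produces stable-map moduli and hence never produces the Gromov--Witten $\mc J$-function directly. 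The whole content of the theorem is precisely the comparison between the $\epsilon=0+$ chamber (the $I$-function) and the $\epsilon=\infty$ chamber (Gromov--Witten theory), and this cannot be read off from the fixed-point strata of a single chamber's graph space. The actual proof shows that the generating series $J^\epsilon$ attached to each stability chamber lies on Givental's Lagrangian cone of the Gromov--Witten theory, using a characterization of the cone via polynomiality/recursion of localization residues on graph spaces with extra markings; semipositivity is then used to show that the resulting cone point lies over the small parameter space, which is what yields the explicit mirror map $\mathbf t=I_1(q)/I_0(q)$. Your ``master identity'' with a remainder ``canceled by string/dilaton'' is not a substitute for this: no such term-by-term cancellation is established, and the string and dilaton equations do not by themselves control the quasimap bubbles at $[1:0]$.

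Two smaller points. First, your description of the over-$[0:1]$ component $F_\beta$ contributing $I_\beta$ is correct, and your use of semipositivity to guarantee the expansion $I=I_0+I_1/u+O(u^{-2})$ with $I_1\in H^{\leq 2}$ is the right place for that hypothesis to enter. Second, the Fano-index-$\geq 2$ specialization via a virtual-dimension count on $F_\beta$ (forcing the $u^0$ and $u^{-1}$ coefficients of $I_\beta$ to vanish for $\beta\neq 0$) is sound and is essentially how that corollary is obtained in the literature. So the endpoints of your argument are fine; what is missing is the bridge between the two chambers, which is the theorem itself.
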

One can check that all the quiver varieties and their subvarieties we consider in Section \ref{sec:quiver} satisfy the assumptions of wall-crossing theorem, so in the following sections, when we talk about the genus-zero Gromov-Witten theories of varieties we mean the quasimap small $I$-functions.

\section{Equivariant Quasimap Small $I$-Functions}\label{sec:equivIfunction}
\subsection{Abelian/nonabelian correspondence for  $I$-functions}
We will mainly follow the work of Rachel Webb about the abelian-nonabelian correspondence to display the quasimap small $I$-functions of our examples, see \cite{webb2021abelianization, abelianization:Webb}. 

Fix a valid input $(V, G, \theta)$ for a GIT quotient  $V\sslash_\theta G$, and assume that $V$ has at most lci singularities.
Let $T=(\C^*)^{r}$ be the maximal torus of $G$ and $W_T=N_T/T$ the Weyl group. 
We denote the semistable, stable and unstable locus of $V$ under the action of $T$ in character $\theta$ by $V^{ss}_\theta(T)$, $V^s_\theta(T)$, and $V^{us}_\theta(T)$.
Assume that $V^{ss}(T)=V^{s}(T)$ and $T$ acts freely on $V^{ss}(T)$, 
so that we obtain a smooth variety $V\sslash_\theta T:=V^{s}(T)/T$. 
Assume there is a torus S acting on V which commutes with the action of G and the actions of $S$ on $V\sslash_\theta G$ and $V\sslash_\theta T$ are both good. 

The relation between $H^*(V\sslash_\theta G)$ and $H^*(V\sslash_\theta T)$ is studied by \cite{Ab_nab:EG,Ab_nab:Mar,Kir}. 
The map $V\sslash_\theta G\dashrightarrow V\sslash_\theta T$ is realized as follows
\begin{equation}\label{diag}
\begin{tikzcd}
V^s(G)/T \arrow[hookrightarrow]{r}{a} \arrow[]{d}{p}
  & V^s(T)/T \\
V^s(G)/G & 
\end{tikzcd}
\end{equation}
The Weyl group $W_T$ acts on $V^s(G)\slash T$, and therefore on $H^*(V^s(G)\slash T)$. 
 The above diagram induces the following classical identification for the cohomology groups
\begin{equation}\label{eqn:cohabelianization}
    H_S^*(V\sslash_\theta G, \Q)\iso H_S^*(V^s(G)\slash T,\Q)^W\,.
\end{equation}
See \cite[Proposition 2.4.1]{abelianization:Webb} for a proof of the above isomorphism for a chow group version.
For each $\gamma \in H_S^*(V\sslash_\theta G,\Q)$, we call $\tilde \gamma \in H_S^*(V\sslash_\theta T, \Q)^W$ a lifting of $\gamma$ if $a^*(\tilde\gamma)=p^*(\gamma)$. 
For each $\eta\in \chi(G)\subset \chi(T)$, there are line bundles $V\times \C_\eta\in \op{Pic}^G(V)$ and $V\times \C_\eta\in \op{Pic}^T(V)$.
Also, there is a natural map from $\op{Pic}^G(V)$ to $\op{Pic}^T(V)$ by restriction.
Therefore we have the following commutative diagram 
\begin{equation}
  \begin{tikzpicture}
\node (VG) at (-1,1) {$\op{Pic}^G(V)$};
\node[right=of VG] (VT) {$\op{Pic}^T(V)$};
\node[below=of VT] (XT) {$\chi(T)$};
\node [below=of VG] (XG) {$\chi(G)$};
\draw[->] (VG)--(VT) node [] {};
\draw[->] (XT)--(VT) node [] {};
\draw[->] (XG)--(VG) node [] {};
\draw[->] (XG)--(XT) node [] {};
\end{tikzpicture}  
\end{equation}
Taking $\op{Hom}(-,\Z)$ to the above diagram, we get the following commutative diagram, 
\begin{equation}\label{diag:pic}
  \begin{tikzpicture}
\node (VT) at (-1,1) {$\op{Hom}(\op{Pic}^T(V),\Z)$};
\node[right=of VT] (VG) {$\op{Hom}(\op{Pic}^G(V),\Z)$};
\node[below=of VT] (XT) {$\op{Hom}(\chi(T),\Z)$};
\node [below=of VG] (XG) {$\op{Hom}(\chi(G),\Z)$};
\draw[->] (VT)--(VG) node [midway,above] {$r_1$};
\draw[->] (VT)--(XT) node [midway, right] {$v_1$};
\draw[->] (VG)--(XG) node [midway, right] {$v_2$};
\draw[->] (XT)--(XG) node [midway,above] {$r_2$};
\end{tikzpicture}  
\end{equation}
For any $\xi\in \chi(T)$, denote by $\mc L_\xi:=V^s(T)\times_T\C_\xi$ the line bundle over $V\sslash_\theta T$. For any $\tilde\beta\in \op{Hom}(\op{Pic}^T(V),\Z)$, denote by $\tilde\beta(\xi):=\tilde\beta(c_1(\mc L_\xi))$, and it also equals $v_1(\tilde \beta)(\xi)$ by the above diagram. 
 
\begin{lem}\label{lem:r1}(\cite{MR3126932})
When $r_1$ restricts to $I$-effective classes $\op{Eff}(V, T,\theta)\subseteq \op{Hom}(\op{Pic}^T(V),\Z)$ in the source and $\op{Eff}(V, G, \theta)\subseteq \op{Hom}(\op{Pic}^G(V),\Z)$ in the target, it has finite fibers. 
\end{lem}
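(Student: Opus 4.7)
The plan is to turn the claim into a lattice-geometric statement: $\op{Eff}(V,T,\theta)$ sits inside a strongly convex rational polyhedral cone in $\op{Hom}(\op{Pic}^T(V),\Z)\otimes\R$, and a strongly convex cone meets each fiber of the linear map $r_1$ in a bounded set of lattice points, hence finitely many.

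First I would pass to coordinates. Since $V$ is affine, we have canonical identifications $\op{Pic}^T(V)\iso\chi(T)$ and $\op{Pic}^G(V)\iso\chi(G)$ (up to torsion coming from stabilizers, which is harmless), and the top row of diagram \eqref{diag:pic} becomes the inclusion $\chi(G)\into\chi(T)$ obtained by restriction. Dualizing, $r_1$ has kernel equal to the annihilator of $\chi(G)$ in $\op{Hom}(\chi(T),\Z)$, and each fiber $r_1^{-1}(\beta)$ is an affine lattice parallel to this kernel.

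Next I would extract the cone containing $\op{Eff}(V,T,\theta)$. Any stable quasimap $(P_T,\sigma)$ from $\mbb P^1$ to $V\sslash_\theta T$ with class $\tilde\beta$ has, by Grothendieck, $P_T=\bigoplus_{k=1}^{\text{rk}\,T}L_k$, and $\tilde\beta$ is encoded by the degrees $d_k=\deg L_k$. Stability yields two families of positivity constraints:
\begin{enumerate}
\item[(a)] ampleness of $\sigma^*(P_T\times_T L_\theta)$ forces $\tilde\beta(\theta)>0$ whenever $\tilde\beta\neq 0$;
\item[(b)] for each character $\xi\in\chi(T)$ whose $\xi$-weight subspace in $V$ meets $V^{ss}_\theta(T)$, finiteness of the base locus forces the induced line bundle $\sigma^*(P_T\times_T\C_\xi)$ to admit a section not identically zero, hence $\tilde\beta(\xi)\geq 0$.
\end{enumerate}
Let $\Xi\subseteq\chi(T)$ be the set of ``$V$-effective'' characters from (b). The assumption $V^{ss}(T)=V^s(T)\neq\emptyset$ with free $T$-action guarantees that the $\Xi$-weight subspaces exhaust enough of $V$ that the cone $\R_{\geq 0}\Xi$ spans $\chi(T)\otimes\R$. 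Its dual $C:=\{\tilde\beta:\tilde\beta(\xi)\geq 0\ \forall\xi\in\Xi\}$ is therefore a strongly convex pointed cone, and by construction $\op{Eff}(V,T,\theta)\subseteq C\cap\op{Hom}(\chi(T),\Z)$.

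Finally, for any $\beta\in\op{Eff}(V,G,\theta)$, the intersection $r_1^{-1}(\beta)\cap C$ is the intersection of a strongly convex cone with an affine subspace, which is bounded; the lattice $\op{Hom}(\chi(T),\Z)$ meets this bounded region in finitely many points, and $r_1^{-1}(\beta)\cap\op{Eff}(V,T,\theta)$ is contained there. The main obstacle is step (b): one must identify $\Xi$ precisely and verify that $\R_{\geq 0}\Xi$ is full-dimensional in $\chi(T)\otimes\R$. For a general affine $V$ with lci singularities this is done by analyzing the $T$-weight decomposition of the defining ideal of $V^{us}(T)$, combined with the Hilbert-Mumford numerical criterion applied to one-parameter subgroups of $T$; once $C$ is shown to be strongly convex, the finiteness claim drops out of standard convex lattice geometry.
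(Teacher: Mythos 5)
The paper offers no proof of this lemma at all --- it is quoted directly from \cite{MR3126932} --- so there is nothing internal to compare against; I can only judge your argument on its own terms, and it has one step that is genuinely false. Your conclusion rests on the claim that ``the intersection of a strongly convex cone with an affine subspace is bounded.'' This is not true: the first octant in $\R^3$ is a strongly convex rational polyhedral cone, yet its intersection with the affine plane $\{z=1\}$ is the unbounded set $\{(x,y,1):x,y\geq 0\}$. Strong convexity of $C$ (equivalently, full-dimensionality of $\R_{\geq 0}\Xi$) only excludes lines through the origin contained in $C$; it says nothing about rays of $C$ lying inside a translate of $\ker(r_1)$. What you actually need is the stronger statement $C\cap(\ker r_1\otimes\R)=\{0\}$, from which boundedness of $r_1^{-1}(\beta)\cap C$ does follow by standard convexity. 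Dually, this amounts to showing that the images of the weights in $\Xi$ under the projection $\chi(T)\otimes\R\to(\chi(T)/\chi(G))\otimes\R$ generate the quotient as a \emph{cone}, not merely as a vector space --- a strictly stronger condition than the linear spanning you verify, and the place where semistability for the nonabelian group $G$ (not just for $T$) has to enter. Note also that your constraint (a) cannot help here: since $\theta\in\chi(G)$, the value $\tilde\beta(\theta)=\beta(\theta)$ is constant on each fiber of $r_1$, so ampleness of $\mathbf{L}_\theta$ never separates two points of the same fiber. In the Grassmannian case $\C^{r\times n}\sslash GL_r$ the needed positivity is visible --- the images of the weights $e_1,\dots,e_r$ in $\R^r/\R(1,\dots,1)$ sum to zero and hence positively span --- but your argument never isolates or proves the general statement.

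A secondary, repairable inaccuracy: in step (b) the vanishing locus constraint gives $\tilde\beta(\xi)\geq 0$ only for those weights $\xi$ in the support of the \emph{generic value} of the section $\sigma$ (the components with $\sigma_\xi\not\equiv 0$), not for every weight whose weight space meets $V^{ss}_\theta(T)$. So $\op{Eff}(V,T,\theta)$ lies in a finite union of dual cones $C_{\Xi'}$, one for each admissible support type $\Xi'$, each of which is strongly convex because a stable point has finite stabilizer and hence its support spans $\chi(T)\otimes\Q$. This refinement is harmless for finiteness, but the unboundedness gap above is not: without proving $C_{\Xi'}\cap(\ker r_1\otimes\R)=\{0\}$ for each admissible $\Xi'$, the proof does not close.
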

 
%Denote the equivariant small $I$-functions of $V\sslash G$ and $V\sslash T$ by $I^{V\sslash G,S}$ and $I^{V\sslash T,S}$. 
 \begin{thm}[\cite{abelianization:Webb}]\label{thm:abelian-nonabelianequiv}
The equivariant quasimap small $I$-functions of $V\sslash_\theta G$ and $V\sslash_\theta T$ satisfy 
 \begin{equation}\label{eqn:abeliannonabelianforI}
     p^*I_{\beta}^{V\sslash_\theta G,S}(u)=\left[ \sum_{\tilde\beta\rightarrow \beta} \prod_{\rho}\frac{\prod_{k\leq \tilde\beta(\rho)}(c_1(\mathcal{L}_{\rho})+ku)}{\prod_{k\leq 0}(c_1(\mathcal{L}_{\rho})+ku)}a^*I_{\tilde \beta}^{V\sslash_\theta T,S}(u)\right]
 \end{equation}
 where the sum is over all preimages $\tilde\beta$ of $\beta$ under the map $r_1$ in above diagram \eqref{diag:pic} and the product is over all roots $\rho$ of $G$. 
 \end{thm}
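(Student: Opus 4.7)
The plan is to prove \eqref{eqn:abeliannonabelianforI} by virtual localization on the graph spaces and a comparison of the $\C^*$-fixed loci appearing on the $G$-side and the $T$-side through diagram \eqref{diag}. First I would analyze the fixed locus $F_\beta \subset QG_\beta(V\sslash_\theta G)$ of Definition \ref{dfn:smallIfunction}: its points are pairs $(P,\sigma)$ consisting of a principal $G$-bundle $P$ on $\mathbb{P}^1$ together with a section of $P\times_G V$ whose entire degree and all basepoints are concentrated at $[0:1]$, with constant evaluation at $[1:0]$. Since any principal $G$-bundle on $\mathbb{P}^1$ admits a reduction of structure group to the maximal torus $T$ (Grothendieck), such reductions are indexed, as elements of $\op{Hom}(\op{Pic}^T(V),\Z)$, by the degrees $\tilde\beta$ mapping to $\beta$ under the map $r_1$ of diagram \eqref{diag:pic}.

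Next, I would construct a natural map $\bigsqcup_{\tilde\beta \to \beta} F_{\tilde\beta} \to F_\beta$ by extending structure group from $T$ to $G$ and using the inclusion $V^s_\theta(G) \hookrightarrow V^s_\theta(T)$ (so that a $T$-stable quasimap becomes a $G$-stable quasimap after extension). Pulling back along $p$ in \eqref{diag} and comparing with $a^*$ on the $T$-side, I would establish an identity of virtual classes
\[
p^*\,[F_\beta]^{vir} \;=\; a^*\sum_{\tilde\beta \to \beta}\,[F_{\tilde\beta}]^{vir},
\]
where the sum is finite by Lemma \ref{lem:r1}. The remaining ingredient is the ratio of virtual normal bundles on the two sides, which comes entirely from the root directions in the Lie algebra.

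Third, I would carry out the Euler class computation for the roots. Decomposing $\mathfrak{g} = \mathfrak{t} \oplus \bigoplus_\rho \mathfrak{g}_\rho$, each root $\rho$ yields a $\C^*$-equivariant line bundle $\mathcal{L}_\rho$ of degree $\tilde\beta(\rho)$ on $\mathbb{P}^1$; its cohomology $H^0 \ominus H^1$, weighted by the standard $\C^*$-linearization $t \cdot [\zeta_0:\zeta_1] = [t\zeta_0:\zeta_1]$, contributes to the moving part of $N^{vir}_{F_{\tilde\beta}}$ as a $\C^*$-equivariant Euler factor
\[
\frac{\prod_{k\leq \tilde\beta(\rho)}\bigl(c_1(\mathcal{L}_\rho) + ku\bigr)}{\prod_{k\leq 0}\bigl(c_1(\mathcal{L}_\rho) + ku\bigr)}.
\]
Taking the product over all roots $\rho$, inserting this factor into the pushforward by $ev_\bullet$ that defines $I_{\tilde\beta}^{V\sslash_\theta T,S}(u)$, and applying the projection formula would reproduce \eqref{eqn:abeliannonabelianforI} precisely.

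The main obstacle is the careful bookkeeping of $\C^*$-weights in the root line bundles as $\tilde\beta(\rho)$ takes positive, zero, or negative values, so that the quoted ratio is well-defined as a regularization of the alternating Euler class of $H^\bullet(\mathbb{P}^1,\mathcal{L}_\rho)$ and agrees uniformly across all signs; this amounts to the Atiyah--Bott fixed-point computation for the standard $\C^*$-action on $\mathbb{P}^1$, with most factors in numerator and denominator cancelling out. A secondary subtlety is that $T$-reductions of a given $G$-bundle are unique only up to the Weyl group $W_T$, so after summing over preimages $\tilde\beta \to \beta$ one must verify $W_T$-invariance of the right-hand side and its compatibility with the identification \eqref{eqn:cohabelianization}; this invariance, combined with the symmetry of the product over roots under $\rho \leftrightarrow w\rho$, is what allows the right-hand side to descend from $V\sslash_\theta T$ back to $V\sslash_\theta G$ along the correspondence \eqref{diag}.
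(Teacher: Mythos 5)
You should first note that the paper contains no proof of Theorem \ref{thm:abelian-nonabelianequiv}: it is imported verbatim from Webb \cite{abeliannonabelian:Webb}, so there is no internal argument to measure your sketch against. On its own terms, your outline assembles the right heuristic ingredients --- Grothendieck splitting of the principal $G$-bundle on $\mathbb{P}^1$, indexing of $T$-reductions by lifts $\tilde\beta$ of $\beta$ under $r_1$, the root spaces $\mathfrak{g}_\rho$ producing line bundles of degree $\tilde\beta(\rho)$ whose regularized Euler class of $H^0\ominus H^1$ is exactly the quoted ratio, and Weyl-invariance so that the right-hand side descends through \eqref{eqn:cohabelianization}. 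These are indeed the reasons the formula looks the way it does.

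The genuine gap is in your second step. The maps $p$ and $a$ of diagram \eqref{diag} are maps between GIT quotients, not between fixed loci of graph spaces, so the asserted identity $p^*[F_\beta]^{vir}=a^*\sum_{\tilde\beta\to\beta}[F_{\tilde\beta}]^{vir}$ does not typecheck; the theorem is an identity of classes on $V^s(G)/T$ obtained only after pushing forward by the two evaluation maps, and converting a fixed-locus comparison into that statement is where essentially all of the work lies. Worse, the proposed map $\bigsqcup_{\tilde\beta\to\beta}F_{\tilde\beta}\to F_\beta$ is not defined: the inclusion $V^{ss}_\theta(G)\subseteq V^{ss}_\theta(T)$ runs in the direction that makes $G$-semistability the \emph{stronger} condition on the section, so a $T$-stable quasimap extended to $G$ may land generically in the $G$-unstable locus and fail to be a $G$-stable quasimap at all --- your parenthetical claim is backwards. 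The correct-direction map (reduce a $G$-quasimap to $T$) does produce stable $T$-quasimaps, but its image is only the open sublocus of $\bigsqcup_{\tilde\beta}F_{\tilde\beta}$ where the section generically meets $V^{ss}(G)$, so the two sides' fixed loci genuinely differ and there is no clean virtual-class matching; Lemma \ref{lem:r1} gives only finiteness of the fibers of $r_1$, not a bijection of fixed components. This discrepancy between the two semistable loci is precisely why Webb's proof does not proceed by matching $\C^*$-fixed points but by a more delicate comparison of obstruction theories over a common base, and your sketch would need to supply that comparison (or an equivalent construction) before it could be called a proof.
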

 Since the map $p$ is surjective, $p^*$ is injective,  then $I^{V\sslash_\theta G}$ is uniquely determined by $p^*I^{V\sslash_\theta G}$. 
In the following, we will make no difference between $I_\beta^{V\sslash_\theta G, S}$ and $p^*I_\beta^{V\sslash_\theta G,S}$.
  
Consider a $G$-equivariant bundle $E$ over $V$, and assume $s$ is a $G$-equivariant regular section of the bundle $E\times V\rightarrow V$. Let $Z:=Z(s)\subseteq V$ be the zero loci of $s$.
Taking $Z$ into consideration, we can extend the diagram \eqref{diag} to
 \begin{equation}
\begin{tikzcd}
Z^{s}_{\theta}(G)\slash T \arrow[hookrightarrow]{r}{b} \arrow[]{d}{\phi}&V^s_{\theta}(G)/T \arrow[hookrightarrow]{r}{a} \arrow[]{d}{p} & V\sslash_\theta T \\
Z\sslash_\theta G \arrow[hookrightarrow]{r}{\psi}  & V\sslash_\theta G&
\end{tikzcd}
\end{equation}
 and extend the diagram \eqref{diag:pic}  to
 \begin{equation}
\begin{tikzcd}
\op{Hom}(\op{Pic}^T(Z),\Q) \arrow[hookrightarrow]{r}{b_*} \arrow[]{d}{}&\op{Hom}(\op{Pic}^T(V),\Q)  \arrow[]{d}{r_1}  \\
\op{Hom}(\op{Pic}^G(Z),\Q) \arrow[hookrightarrow]{r}{\psi_*}  & \op{Hom}(\op{Pic}^G(V),\Q)
\end{tikzcd}
\end{equation}
For each $\xi \in \chi(T)$, and $\beta\in \op{Home}(\op{Pic}^T(V),\Z)$,
denote
\begin{equation}
    C(\beta,\xi):=\frac{\prod_{k\leq 0}(c_1(\mathcal{L}_\xi)+ku)}{\prod_{k\leq \beta(\xi)}(c_1(\mathcal{L}_\xi)+ku)}\,.
\end{equation}
The equivariant quasimap small $I$-functions of $Z\sslash_\theta G$ and $V\sslash_\theta T$ satisfy the following relation, which can be viewed as an abelian/nonabelian quantum Lefschetz theorem. 
\begin{thm}[\cite{webb2021abelianization, abelianization:Webb}]\label{thm:abeliannonabelianlefchetz}
 Assume that weights of $E$ with respect to the action of $T$ are $\epsilon_j$, for $j=1,\ldots,m$, and $\rho_i$ for $i=1,\ldots,r$ are roots of $G$. Then for a fixed $\delta\in \op{Hom}(\op{Pic}^G(V),\Q) $,  we have the following relation between $I$-functions of $Z\sslash_\theta G$ and $V\sslash_\theta T$,
 \begin{equation}
    \sum_{\beta\rightarrow \delta} \phi^*I_{\beta}^{Z\sslash_\theta G, S}(u)=\sum_{\tilde\delta\rightarrow \delta}
    \left(\prod_{i=1}^m C(\tilde \delta, \epsilon_i)^{-1} \right)\left(\prod_{i=1}^r  C(\tilde \delta, \rho_i)^{-1}  \right) b^*a^* I_{\tilde\delta}^{V\sslash T, S}(u)
 \end{equation}
  where $\tilde\delta\in \op{Hom}(\op{Pic}^T(V),\Q)$ are preimages of $\delta$ via $r_1$, and $\beta\in \op{Hom}(\op{Pic}^G(Z),\Q)$.
\end{thm}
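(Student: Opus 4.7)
The plan is to derive this statement as a combination of two previously established principles: the abelian/nonabelian correspondence for $I$-functions (Theorem \ref{thm:abelian-nonabelianequiv}) and a quasimap quantum Lefschetz principle for complete intersections cut out by regular sections. The strategy is to realize $QG_\beta(Z\sslash_\theta G)$ as the zero locus, inside $QG_\beta(V\sslash_\theta G)$, of a canonical section of a bundle $\mathcal{E}_\beta$ built from the $G$-equivariant bundle $E$ over $V$, and then track this relation through the abelianization diagram relating $V\sslash_\theta G$ and $V\sslash_\theta T$.

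Concretely, the first step is to construct, over each point $(P,\sigma)$ in $QG_\beta(V\sslash_\theta G)$, the sheaf $\pi_*(P \times_G E)$ with respect to the universal $\mathbb{P}^1$. The $G$-equivariant regular section $s$ of $V \times E \to V$ induces a canonical section of this sheaf whose vanishing locus is exactly $QG_\beta(Z\sslash_\theta G)$. By functoriality of virtual pullback along closed embeddings defined by regular sections, this yields an identification of virtual classes, schematically $[QG_\beta(Z\sslash_\theta G)]^{vir} = e(\mathcal{E}_\beta) \cap [QG_\beta(V\sslash_\theta G)]^{vir}$. Restricting to the $\mathbb{C}^*$-fixed locus $F_\beta$ on which the quasimap and the induced section are supported over $[0:1]\in \mathbb{P}^1$, the $\mathbb{C}^*$-weight decomposition of the relevant space of sections produces precisely the factor $\prod_j C(\tilde\delta,\epsilon_j)^{-1}$ in the abelianized expression, since on the $T$-abelianized side the weights are exactly $\{ c_1(\mathcal{L}_{\epsilon_j})+ku : 1\leq k\leq \tilde\delta(\epsilon_j) \}$.

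Second, I would invoke the abelian/nonabelian correspondence in Theorem \ref{thm:abelian-nonabelianequiv} to rewrite the $V\sslash_\theta G$-side in terms of $V\sslash_\theta T$, summing over preimages $\tilde\delta \to \delta$ under the map $r_1$ of diagram \eqref{diag:pic} and inserting the root factor $\prod_i C(\tilde\delta,\rho_i)^{-1}$. Combining this with the Lefschetz factor obtained in the previous step, and pulling back through $b^*a^*$ to $Z^s(G)/T$, produces the right-hand side of the claimed identity. A key consistency check is that the sum on the left over preimages $\beta\to\delta$ on the complete intersection side matches the combined sum over $\tilde\delta\to\delta$ on the right: this uses Lemma \ref{lem:r1} together with the commutativity and finiteness of the fibers of the diagram extending \eqref{diag:pic} to $Z$.

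The main obstacle is ensuring that the virtual pullback identification and the $\mathbb{C}^*$-localization are compatible with the abelianization diagram, especially on fixed loci where base points of the quasimap and the higher cohomology $H^1(\mathbb{P}^1, P\times_G E)$ may contribute to the obstruction bundle. One needs either a convexity-type hypothesis ensuring that the relevant $H^1$ vanishes on the generic stratum, or a cosection-localization style argument reducing the virtual class to the Euler class of the expected bundle. I would handle this by following the framework of \cite{abeliannonabelian:Webb,abelianizationlef:Webb}: resolve $E$ equivariantly by a two-term complex, use $s$ to build a cone stack over $QG_\beta(V\sslash_\theta G)$, and verify that the induced virtual pullback descends correctly through the abelianization. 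Once this is in place, the bookkeeping of the $C(\tilde\delta,\epsilon_j)$ and $C(\tilde\delta,\rho_i)$ factors follows from the standard localization formula applied to a $\mathbb{P}^1$ carrying a principal $T$-bundle of degree $\tilde\delta$, and the Weyl-invariance of the final expression follows automatically from the identification \eqref{eqn:cohabelianization}.
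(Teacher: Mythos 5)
The paper does not actually prove this statement: it is imported wholesale from Webb's work \cite{abeliannonabelian:Webb,abelianizationlef:Webb}, so there is no in-paper argument to compare yours against. Judged on its own terms, your outline is consistent with how the result is established in those references: realizing the quasimap space of the complete intersection as the vanishing locus of the induced section of $\pi_*(P\times_G E)$ over the graph space of the ambient quotient, extracting the factors $C(\tilde\delta,\epsilon_j)^{-1}$ from the $\C^*$-weight decomposition of sections of a line bundle of degree $\tilde\delta(\epsilon_j)$ on $\mathbb P^1$, and then feeding the result through Theorem \ref{thm:abelian-nonabelianequiv} to pick up the root factors $C(\tilde\delta,\rho_i)^{-1}$ and the sum over preimages under $r_1$. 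You also correctly identify the genuine technical crux, namely that $\pi_*(P\times_G E)$ need not be a vector bundle when $R^1\pi_*$ is nonzero (negative degrees $\tilde\delta(\epsilon_j)<0$, base points), so the naive identity $[QG_\beta(Z\sslash_\theta G)]^{vir}=e(\mathcal E_\beta)\cap[QG_\beta(V\sslash_\theta G)]^{vir}$ cannot be asserted as stated; note that the formula's $C(\tilde\delta,\epsilon_j)^{-1}$ already encodes both $H^0$ and $H^1$ contributions, which is exactly why the abelianized Lefschetz statement is not a bare Euler-class insertion. The weakness of the proposal is that this crux, together with the compatibility of the virtual pullback with the abelianization diagram, is precisely the content of the cited theorems, and you resolve it only by deferring to "the framework of Webb." As a reconstruction of the proof strategy your plan is sound and the bookkeeping of sums via Lemma \ref{lem:r1} and the extended diagram is correct, but as a self-contained proof it leaves the essential analytic step unproved.
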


\subsection{Quasimap small $I$-functions of our examples}\label{quasimap}
We will apply the abelian/nonabelian correspondence for $I$-functions to find the equivariant quasimap small $I$-functions of the varieties displayed in Section \ref{sec:quiver}.

\textbf{Conventions and Notations}
\begin{enumerate}[(1)]
    \item Denote $[N]:=\{1,\ldots, N\}$, and $\vec C_{[M]}:=\{f_1<\ldots<f_M\}\subset [N]$ a subset of $M$ integers in $[N]$.
    \item Fix a decorated quiver with superpotential $\mathbf Q=(Q_f\subset Q_0, Q_1, W)$ and an integer vector $\vec v=(N_i)_{i\in Q_0}$. Let $T=\prod_{i\in Q_0\backslash Q_f}(\C^*)^{N_i}$ be the maximal torus in the gauge group $G$.
    Consider a line bundle $V\times \C$ over $V$, and $t=(t_{I}^i)_{i\in Q_0\backslash Q_f,\,I\in [N_i]}\in T$ acts on it by $t\cdot((A_{i \rightarrow j}),v)=(t(A_{i\rightarrow j}),t^i_Iv)$. This action defines a line bundle $L_{I}^i:=V^{ss}(T)\times_{T}\C$. 
    Denote by $x^i_I:=c_1(L_I^i)\in H^*(V\sslash_\theta T)$ the first Chern class of such bundle for each $i\in Q_0\backslash Q_f, I\in [N_i]$.
    \item Let $S=(\C^*)^{N_8+N_9}$ and $R=(\C^*)^{N_4}$. 
    Denote the equivariant cohomology ring of a point under a trivial action of $S$ by $H^*_{S}(pt,\Q)=Q[\lambda_1,\ldots,\lambda_{N_8},\lambda_{N_8+1},\ldots,\lambda_{N_8+N_9}]$ and that of $R$ by $H^*_{R}(pt,\Q)=Q[\lambda_1,\ldots,\lambda_{N_4}]$.
    \item For each variety, we use the same notation $\vec q=(q_i)_{i\in Q_0\backslash Q_f}$ to denote the K\"ahler variables except when we need to consider transformations of K\"ahler variables under quiver mutations.
    \item Denote by $\op{Eff}^s:=\op{Eff}(V_s,G_s,\theta_s)$ and $\op{Eff}^{ms}:=\op{Eff}(\tilde V_s,\tilde G_s,\tilde{\theta}_s)$ the semigroup of $I$-effective classes of the star-shaped quiver variety and the variety $\tilde{\mc X}_s$ in Example \ref{ex:starmc}. Denote by $\op{Eff}^{s}_T$ and $\op{Eff}^{ms}_T$ their lifting to $\op{Hom}(\op{Pic}^T(V_s),\Q)$ and $\op{Hom}(\op{Pic}^T(\overline V_s),\Q)$. 
    Denote by $\op{Eff}^i$ the semigroups of $I$-effective classes of $\mc X_i$, and by $\op{Eff}^i_T$ their lifting via $r_1$.
\end{enumerate}
    For a general quiver $(Q_f\subset Q_0, Q_1, W)$ with integer vector $\vec v=(N_i)$, let $(V, G, \theta)$ be the input data of the quiver variety $V\sslash_\theta G$.
    A stable quasimap $(P,\sigma)$ from $\mathbb P^1$ to $\mc X=V\sslash_\theta G$ is equivalent to the following ingredients:
\begin{enumerate}[(a)]
    \item a vector bundle of matrices $P$ which can be written as  $\oplus_{i\rightarrow j\in Q_1}\oplus_{I=1}^{N_i}\oplus_{J=1}^{N_j}\mc O(n^{i}_I-n^{j}_J)$,
    \item a section $\sigma$ of the above bundle which maps all but finite points of $\mbb P^1$ to semi-stable locus.
\end{enumerate}
By our examples in Section \ref{sec:quiver}, 
a semistable point $(A_{i\rightarrow j})_{i\rightarrow j\in Q_1}$ in $V$ is described by the non-degeneracy of some matrices. 
If a matrix $A_{i\rightarrow j}$ is non-degenerate in $V_\theta^{ss}(G)$, the corresponding vectors $\vec n^i=(n^{i}_I)_{I=1}^{N_i}, \vec n^j=(n^{j}_J)_{J=1}^{N_j}$ satisfy the following conditions:
\begin{equation}\label{eqn:ruleofeffclass}
\begin{cases}
    \exists \text{ distinct }\{J_I\}_{I=1}^{N_i}\subset [N_j], \,\mathrm{s.t.} n^i_I-n^j_{J_I}\geq 0\,\,& \text{ if }
    N_j\geq N_i,\\
    \exists \text{ distinct }\{I_J\}_{J=1}^{N_j}\subset [N_i], \,\mathrm{s.t.} n^i_{I_J}-n^j_{J}\geq 0\,\,& \text{ if } N_j\leq N_i.
\end{cases}
\end{equation}
Those vectors $(n^i_I)_{i\in Q_0\backslash Q_f, I=1,\ldots,N_i}$ actually are the preimages of of $\mathrm{Eff}(V, G,\theta)$ in the diagram \eqref{diag:pic} under $r_1$ and Lemma \ref{lem:r1}, denoted by $\op{Eff}_T$, which explicitly are $\op{Eff}^{s}_T$, $\op{Eff}^{ms}_T$ and $\op{Eff}^{i}_T$, $i=1,\ldots,11$ for our examples. The map $r_1$ sends $(n^i_I)_{i,I}$ to $(\abs{\vec n^{i}})_{i\in Q_0\backslash Q_f}$ where $\abs{\vec n^{i}}=\sum_{I=1}^{N_i}n_{I}^i$.

\begin{lem}\label{lem:Iofquiver}
For a quiver variety $\mc X=V\sslash_\theta G$ with an $S$ action which comes from frame nodes, its equivariant quasimap small $I$-function is
\begin{align}
   p^*I^{\mc X, S}(\vec q,u)&=
    \sum_{(\vec n^i)\in \op{Eff}_T}\prod_{i\in Q_0\backslash Q_f}\prod_{\substack{I,J=1\\I\neq J}}^{N_i}
   a^*\Big( \frac{\prod_{l\leq n^i_I-n^i_J}(x^i_I-x^i_J+lu)}{\prod_{l\leq 0}(x^i_I-x^i_J+lu)}\nonumber\\
    &
    \prod_{i\rightarrow j\in Q_1}\prod_{I=1}^{N_i}\prod_{J=1}^{N_j}\frac{\prod_{l\leq 0}(x^i_I-x^j_J+lu)}{\prod_{l\leq n^i_I-n^j_J}(x^i_I-x^j_J+lu)}\Big)\prod_{i\in Q_0\backslash Q_f}q_i^{\abs{\vec n^i}}\,.
\end{align}
In the above formula, when one node $i$ is in $Q_f$, we let $n^i_I=0$. For quivers $\mathbf Q_i$ that are mutation equivalent to $D_3$, $x^4_I=\lambda_I$. For the star-shaped quivers and its quiver mutation,  $x^8_I=\lambda_I$, $I=1,\ldots,N_8$ and $x^9_J=\lambda_{N_8+J}$, $J=1,\ldots,N_9$.
\end{lem}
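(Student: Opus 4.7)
The plan is to derive the formula by abelianization. Take $T = \prod_{i\in Q_0\setminus Q_f}(\C^*)^{N_i}$ to be the maximal torus of $G$; then $V\sslash_\theta T$ is a toric DM stack and Theorem \ref{thm:abelian-nonabelianequiv} expresses the equivariant quasimap $I$-function of $V\sslash_\theta G$ in terms of that of $V\sslash_\theta T$ and a root-product correction.

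The first step is to write down the $S$-equivariant small $I$-function of the toric quotient $V\sslash_\theta T$. Under the action $g\cdot A_{i\to j} = g_i A_{i\to j} g_j^{-1}$, the matrix entry $A^{IJ}_{i\to j}$ is a one-dimensional $T$-representation of weight $x^i_I - x^j_J$, so $V$ is naturally a direct sum of characters indexed by the triples $(i\to j, I, J)$ with $I\in [N_i]$, $J\in [N_j]$. The standard quasimap $I$-function for a toric GIT quotient (see \cite{MR3126932}) then reads
\begin{equation*}
    I^{V\sslash_\theta T, S}(\vec q, u) = \sum_{(\vec n^i)\in \op{Eff}_T}\,\prod_{i\to j\in Q_1}\prod_{I=1}^{N_i}\prod_{J=1}^{N_j} \frac{\prod_{l\leq 0}(x^i_I - x^j_J + lu)}{\prod_{l\leq n^i_I - n^j_J}(x^i_I - x^j_J + lu)}\,\prod_{i\in Q_0\setminus Q_f} q_i^{|\vec n^i|},
\end{equation*}
where the $q$-marking uses the identification $r_1(\vec n^i) = (|\vec n^i|)$ from diagram \eqref{diag:pic}, and frame-node indices $i\in Q_f$ contribute the prescribed $S$-equivariant parameters in place of $x^i_I$ with no associated $q$-variable.

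The second step is to apply Theorem \ref{thm:abelian-nonabelianequiv}. The roots of $G = \prod_{i\in Q_0\setminus Q_f} GL(N_i)$ are precisely the characters $\rho = x^i_I - x^i_J$ with $I\neq J$ at a common gauge node $i$, and for $\tilde\beta = (\vec n^i)$ one has $\tilde\beta(\rho) = n^i_I - n^i_J$. Substituting into the root-product prefactor of Theorem \ref{thm:abelian-nonabelianequiv} produces the first factor of the claimed formula, while the toric $I$-function above furnishes the second factor. Collapsing the double sum $\sum_{\beta\in \op{Eff}}\sum_{\tilde\beta\to\beta}$ into a single sum over $\op{Eff}_T$ (permissible because $r_1$ has finite fibers by Lemma \ref{lem:r1}) then yields the stated expression for $p^* I^{\mc X, S}(\vec q, u)$, and injectivity of $p^*$ lets us identify this with $I^{\mc X, S}(\vec q, u)$ as stated.

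The main obstacle is the combinatorial characterization of $\op{Eff}_T$: a lifted class $\tilde\beta = (\vec n^i)$ lies in $\op{Eff}_T$ exactly when the associated collection of line bundles over $\PP^1$ admits a section that lands in $V^{ss}_\theta(T)$ away from finitely many points and makes the quasimap stable in the sense of Definition \ref{dfn:stablequasigraphmap}. For each of the phases chosen in Section \ref{sec:quiver}, $\theta$-semistability amounts to non-degeneracy of certain block submatrices of the $A_{i\to j}$'s, and translating this into the degree data $(n^i_I - n^j_J)$ gives precisely the two conditions stated just above the lemma — that every row and every column of $(n^i_I - n^j_J)_{I,J}$ contains a non-negative entry, and that $(\max\{n^i_I - n^j_J, 0\})$ is row- or column-full-rank. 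Once this identification is verified by inspection for each variety in our list, the rest of the argument is formal bookkeeping between the toric numerators and denominators and the root-correction factor.
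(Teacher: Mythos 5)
Your derivation is correct and is exactly the route the paper intends: the lemma is stated as a direct consequence of Theorem \ref{thm:abelian-nonabelianequiv} applied to the toric quasimap $I$-function of $V\sslash_\theta T$, with the roots of $\prod_i GL(N_i)$ giving the $x^i_I-x^i_J$ factors and the weights of $T$ on $V$ (one per matrix entry of each arrow) giving the $x^i_I-x^j_J$ factors, and with the double sum collapsed via Lemma \ref{lem:r1}. Your identification of $\op{Eff}_T$ with the combinatorial non-degeneracy conditions on $(n^i_I-n^j_J)$ is likewise the same bookkeeping the paper performs just before the lemma.
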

For convenience, denote the degree $\beta=(\vec n^i)$ term of $p^*I^{\mc X, S}$ by $I^{\mc X, S}_{\beta}$.
Suppose $\mc Z:=Z\sslash_\theta G$ is a subvariety in a quiver variety $\mc X=V\sslash_\theta G$, such that $Z$ is the zero loci of a regular section of a bundle $E$ over $V$. Suppose the weights of $T$ action on $E$ are $({\epsilon_a})_{a=1}^{m}$. 
\begin{lem}
The equivariant quasimap small $I$-function of $\mc Z$ can be written as follows by Theorem \ref{thm:abeliannonabelianlefchetz}
\begin{align}
   p^*I^{\mc Z, S}(\vec q,u)&=
    \sum_{\beta\in \op{Eff}_T}I^{\mc X, S}_{\beta}\prod_{a=1}^m\frac{\prod_{l\leq \beta(c_1(L_{\epsilon_a}))}(c_{1}(L_{\epsilon_a})+lu)}{\prod_{l\leq 0}(c_{1}(L_{\epsilon_a})+lu)}\prod_{i\in Q_0\backslash Q_f}q_i^{\abs{\vec n^i}}
\end{align}
\end{lem}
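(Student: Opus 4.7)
The proof is a direct application of the abelian/nonabelian quantum Lefschetz formula (Theorem \ref{thm:abeliannonabelianlefchetz}), combined with the explicit description of $p^*I^{\mc X, S}$ in Lemma \ref{lem:Iofquiver}. I would proceed in three short steps.

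First, for a fixed $\delta \in \op{Hom}(\op{Pic}^G(V), \Q)$, Theorem \ref{thm:abeliannonabelianlefchetz} gives
\begin{equation*}
\sum_{\beta \to \delta} \phi^* I^{\mc Z, S}_\beta(u)
= \sum_{\tilde\delta \to \delta} \Bigl(\prod_{a=1}^m C(\tilde\delta, \epsilon_a)^{-1}\Bigr)\Bigl(\prod_\rho C(\tilde\delta, \rho)^{-1}\Bigr)\, b^* a^* I^{V\sslash_\theta T, S}_{\tilde\delta}(u),
\end{equation*}
where the second product runs over the roots $\rho$ of $G = \prod_{i \in Q_0\setminus Q_f} GL(N_i)$, namely $\rho = x^i_I - x^i_J$ for $i \in Q_0\setminus Q_f$ and $I\neq J$.

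Second, I would recognise that the product of the root factor $\prod_\rho C(\tilde\delta,\rho)^{-1}$ with the toric $I$-function $a^* I^{V\sslash_\theta T, S}_{\tilde\delta}$ is literally the expression inside $a^*(\cdot)$ in the formula of Lemma \ref{lem:Iofquiver}. Indeed, $C(\tilde\delta, x^i_I - x^i_J)^{-1} = \prod_{l\leq n^i_I - n^i_J}(x^i_I - x^i_J + lu)/\prod_{l\leq 0}(x^i_I - x^i_J + lu)$, while the toric $I$-function of the abelian quotient $V\sslash_\theta T$ supplies the complementary product over arrows $i\to j$. Hence the right-hand side above collapses to $\sum_{\tilde\delta \to \delta} I^{\mc X, S}_{\tilde\delta} \prod_{a=1}^m C(\tilde\delta, \epsilon_a)^{-1}$, with the monomial $q^{\tilde\delta}$ attached to each term per the convention preceding the statement.

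Finally, summing over all $\delta$ and reindexing by $\op{Eff}_T = \bigsqcup_\delta r_1^{-1}(\delta)$, which is well posed because each fiber of $r_1$ is finite by Lemma \ref{lem:r1}, yields exactly the stated formula: one rewrites $C(\tilde\delta, \epsilon_a)^{-1}$ in the form $\prod_{l\leq \tilde\delta(\epsilon_a)}(c_1(\mc L_{\epsilon_a}) + lu)/\prod_{l\leq 0}(c_1(\mc L_{\epsilon_a}) + lu)$ and uses the natural identification $\tilde\delta(\epsilon_a) = \beta(c_1(L_{\epsilon_a}))$ coming from the commutative diagram \eqref{diag:pic}. The only point requiring genuine care -- and hence the main obstacle, though mild -- is verifying that the double sum over $(\delta, \beta\to\delta)$ assembles into a single sum indexed by $\op{Eff}_T$; no new geometric input beyond Theorems \ref{thm:abelian-nonabelianequiv}--\ref{thm:abeliannonabelianlefchetz} is required.
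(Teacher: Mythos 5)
Your proposal is correct and follows exactly the route the paper intends: the paper states this lemma with no separate proof beyond the phrase ``by Theorem \ref{thm:abeliannonabelianlefchetz}'', and your three steps (apply the abelian/nonabelian quantum Lefschetz formula, absorb the root factors $\prod_\rho C(\tilde\delta,\rho)^{-1}$ into $I^{\mc X,S}_{\tilde\delta}$ via Theorem \ref{thm:abelian-nonabelianequiv} and Lemma \ref{lem:Iofquiver}, then reindex the double sum by $\op{Eff}_T$ using the finiteness of the fibers of $r_1$ from Lemma \ref{lem:r1}) are precisely the routine verification being elided. The only caveat worth noting is that the paper's convention already builds the monomial $\prod_i q_i^{\abs{\vec n^i}}$ into $I^{\mc X,S}_\beta$, so the displayed formula repeats that factor; your handling of the $q$-monomials is the consistent one.
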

Thus we can obtain the quasimap small $I$-functions of all varieties in our story.

\section{Proofs for the Theorem \ref{thm:1st} and Theorem \ref{thm:2nd}}\label{sec:proof}
We first spell out our strategy to prove the equivalence of two quasimap small $I$-functions $I^{\mc Z}$ and $I^{\tilde{\mc Z}}$ of two varieties $\mc Z$ and $\tilde{\mc Z}$ related by a quiver mutation. 
In all examples we discuss, there is a common torus action $S$ on $\mc Z$ and $\tilde{\mc Z}$ such that the torus fixed loci $\mc Z^S$ and $\tilde{\mc Z}^S$ are discrete and finite with the same cardinality. Hence by the localization theorem \cite{ATIYAH19841},
We have $$ H^*_S(\mc Z)\iso H^*_S(\mc Z')\iso\oplus_{P\in \mc Z^S}H^*_S(P).$$
Let $\iota:\mc Z^S\rightarrow \tilde{\mc Z}^S$ be a natural bijection. 
Then in order to prove the relations in Theorem \ref{thm:1st} and Theorem \ref{thm:2nd} of quasimap small $I$-functions $I^{\mc Z}$ and $I^{\tilde{\mc Z}}$, we only have to prove the corresponding relations of restrictions of $I^{\mc Z}$ and $I^{\tilde{\mc Z}}$ to point $P\in \mc Z^S$ and $\iota(P)\in (\mc Z')^S$ for each $P\in \mc Z^S$.

In this section, we will find the torus fixed points of all varieties in Section \ref{sec:torusfixedloci}, recall the fundamental building block in Section \ref{sec:fundamentalblock}, and prove the main Theorems in Section \ref{sec:prooftheorem1} and Section \ref{sec:prooftheorem2}.
\subsection{Equivariant cohomology groups}\label{sec:torusfixedloci}
\subsubsection{Equivariant cohomology groups of general star-shaped quivers}\label{sec:torusfixedpointsstar}
 Denote by $\mathfrak F_s^b$ and $\mathfrak F_s^{a}$ the torus fixed loci of the star-shaped quiver $\mc X_s$ and $\tilde{\mc Z}_s$ under $S$ action. 
 Denote by $\mathfrak F^b_g$ and $\mathfrak F^a_g$ the $\tilde S=\prod_{i\in Q_f}(\mathbb C^*)^{N_i}$-fixed points in general star-shaped quiver $\mc X_g$ and $\tilde{\mc Z}_g$ discussed in Example \ref{ex:ggeneral} and \ref{ex:ggeneralstarmc}.

 \begin{lem}\label{lem:Fsab}
The $S$-fixed locus $\mathfrak F_s^b$ can be parameterized by the following set
\begin{align}\label{eqn:Fsb}
    \{ (\vec C_{[N_i]})_{i\in Q_0\backslash Q_f}\,\rvert\, \vec C_{[N_i]}\subset \vec C_{[N_j]} \text{ for } i\rightarrow j\in Q_1 \text{ and } i\neq 5;\,\vec C_{[N_5]}\subset \vec C_{[N_6]}\cup \vec C_{[N_7]} \},
\end{align}
and the $S$-fixed locus $\mathfrak F_s^a$ can be parameterized by the following set
\begin{align}\label{eqn:Fsa}
       \Big\{ &
       (\vec C_{[N_i]})_{i\in Q_0\backslash Q_f}\,\rvert\,
       \vec C_{[N_1]}\subset \vec C_{[N_3]}\subset \vec C_{[N_6]}\cup \vec C_{[N_7]};\,\,
       \vec C_{[N_2]}\subset \vec C_{[N_4]}\subset \vec C_{[N_6]}\cup \vec C_{[N_7]};\,\nonumber\\
       &\vec C_{[N_6]}\subset [N_8], \vec C_{[N_7]}\subset [N_9];\,
        \vec C_{[N_5']}\subset \vec C_{[N_6]}\cup \vec C_{[N_7]};\,
        \vec C_{[N_3]}\cap \vec C_{[N_5']}=
       \vec C_{[N_4]}\cap \vec C_{[N_5']}=\emptyset
      \Big \}. 
\end{align}
Furthermore, there is a canonical bijection 
\begin{align}
    \iota_s: \mathfrak F_s^b\rightarrow \mathfrak F_s^a\,,
\end{align}
such that for a general point $(\vec C_{[N_i]})\in \mathfrak F_s^b$,  $\iota_s$ keeps $\vec C_{[N_i]}$ for $i\neq 5$ and sends $\vec C_{[N_5]}$ to $(\vec C_{[N_6]}\cup \vec C_{[N_7]})\backslash \vec C_{[N_5]}$.
\end{lem}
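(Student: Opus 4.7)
The plan is to obtain both fixed-point parameterizations by propagating label sets from the framed nodes $8,9$ inward through the quiver, using the following standard observation: for a matrix $A$ of size $N_i\times N_j$ required to be row (or column) full rank, acted on by $GL(N_i)$ on one side while the other side already carries a weight basis labeled by a set $\Lambda$, the $S$-fixed locus contributes a subset $\vec{C}_{[N_i]}\subset\Lambda$ of size $N_i$ together with the induced weight basis at node $i$ (this reduces to the standard fixed-point picture of a Grassmannian). Since the labels at the two framed nodes live in the disjoint ranges $[N_8]$ and $\{N_8{+}1,\ldots,N_8{+}N_9\}$, all ensuing unions and intersections of $\vec{C}_{[N_6]},\vec{C}_{[N_7]}$ and their sublabel sets are unambiguous.

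For $\mc X_s$ I apply this observation in order: $A_7,A_8$ give $\vec{C}_{[N_6]}\subset[N_8]$ and $\vec{C}_{[N_7]}\subset[N_9]$; joint non-degeneracy of $[A_5\;A_6]$ realizes node $5$ as a Grassmannian of $N_5$-subspaces of the $(N_6+N_7)$-dimensional weight space at nodes $6,7$, producing $\vec{C}_{[N_5]}\subset\vec{C}_{[N_6]}\cup\vec{C}_{[N_7]}$; and the remaining single-arrow non-degeneracies propagate $\vec{C}_{[N_3]},\vec{C}_{[N_4]}\subset\vec{C}_{[N_5]}$ together with $\vec{C}_{[N_1]}\subset\vec{C}_{[N_3]}$, $\vec{C}_{[N_2]}\subset\vec{C}_{[N_4]}$, which is exactly \eqref{eqn:Fsb}. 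Running the same procedure on the ambient $\widetilde{\mc X}_s$ of $\widetilde{\mc Z}_s$ uses row full rank of the packaged matrices $[B_1\;B_3]$, $[B_2\;B_4]$ and column full rank of $\bigl[\begin{smallmatrix}A_5\\A_6\end{smallmatrix}\bigr]$ to produce at once $\vec{C}_{[N_3]},\vec{C}_{[N_4]},\vec{C}_{[N_5']}\subset\vec{C}_{[N_6]}\cup\vec{C}_{[N_7]}$ together with $\vec{C}_{[N_1]}\subset\vec{C}_{[N_3]}$, $\vec{C}_{[N_2]}\subset\vec{C}_{[N_4]}$. Cutting down from $\widetilde{\mc X}_s$ to $\widetilde{\mc Z}_s$ via the critical-locus equations $[B_1\;B_3]\bigl[\begin{smallmatrix}A_5\\A_6\end{smallmatrix}\bigr]=[B_2\;B_4]\bigl[\begin{smallmatrix}A_5\\A_6\end{smallmatrix}\bigr]=0$ is where the disjointness appears: at a combinatorial fixed point the image of the right factor equals the coordinate subspace spanned by $\vec{C}_{[N_5']}$, while the kernel of $[B_1\;B_3]$ equals the coordinate subspace spanned by $(\vec{C}_{[N_6]}\cup\vec{C}_{[N_7]})\setminus\vec{C}_{[N_3]}$, so the vanishing of the composition is equivalent to $\vec{C}_{[N_3]}\cap\vec{C}_{[N_5']}=\emptyset$, and analogously for $\vec{C}_{[N_4]}$. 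This gives \eqref{eqn:Fsa}.

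The bijection $\iota_s$ is then a direct combinatorial check. The map $\vec{C}_{[N_5]}\mapsto(\vec{C}_{[N_6]}\cup\vec{C}_{[N_7]})\setminus\vec{C}_{[N_5]}$ produces a set of cardinality $N_6+N_7-N_5=N_5'$, and the inclusions $\vec{C}_{[N_3]},\vec{C}_{[N_4]}\subset\vec{C}_{[N_5]}$ from \eqref{eqn:Fsb} immediately yield both $\vec{C}_{[N_3]},\vec{C}_{[N_4]}\subset\vec{C}_{[N_6]}\cup\vec{C}_{[N_7]}$ and their required disjointness with $\iota_s(\vec{C}_{[N_5]})$. The symmetric map $\vec{C}_{[N_5']}\mapsto(\vec{C}_{[N_6]}\cup\vec{C}_{[N_7]})\setminus\vec{C}_{[N_5']}$ inverts $\iota_s$ by double complementation, and the disjointness clauses in \eqref{eqn:Fsa} are precisely what guarantee that this preimage contains $\vec{C}_{[N_3]}$ and $\vec{C}_{[N_4]}$, so it lands in $\mathfrak{F}_s^b$. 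The main point demanding care is the translation of the scheme-theoretic equations into disjointness: it requires simultaneously the row full rank of $[B_1\;B_3]$ to identify its kernel with the asserted coordinate subspace and the column full rank of $\bigl[\begin{smallmatrix}A_5\\A_6\end{smallmatrix}\bigr]$ to identify its image similarly, both at a fixed point where the auxiliary equations $A_3=A_4=0$ are already absorbed by weight considerations.
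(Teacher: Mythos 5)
Your proposal is correct and follows essentially the same route as the paper: both characterize the $S$-fixed points by putting the non-degenerate matrices into (row or column) reduced echelon form with only pivot entries nonzero, record each matrix by the label set of its pivots, and obtain the bijection $\iota_s$ by complementation at the center node. You in fact spell out one step the paper leaves implicit, namely that the critical-locus equations $\begin{bmatrix}B_1&B_3\end{bmatrix}\begin{bmatrix}A_5\\A_6\end{bmatrix}=\begin{bmatrix}B_2&B_4\end{bmatrix}\begin{bmatrix}A_5\\A_6\end{bmatrix}=0$ translate at a fixed point into the disjointness conditions $\vec C_{[N_3]}\cap\vec C_{[N_5']}=\vec C_{[N_4]}\cap\vec C_{[N_5']}=\emptyset$.
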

\begin{proof}
According to the discussion in Example \ref{ex:generalstar}, each point  $(A_{i}) \in \mc X_s$ is a set of non-degenerate matrices. Such a point is $S$-fixed if and only if it has a representative such that $A_i$ for $i\neq 5,6$ and $\begin{bmatrix}A_5 &A_6\end{bmatrix}$ are all in reduced row echelon forms and these matrices have all entries except for pivots zero. Those matrices are totally determined by the column numbers of pivots. Therefore, we use the column numbers  of pivots $\vec C_{[N_i]}$ to represent these matrices.  

For a row reduced echelon form $A_{i\rightarrow j}$ with $i\rightarrow j\in Q_1$, we can relabel the columns by $\vec C_{N_j}$, and then use the numbers of columns its pivots lie in to represent $A_{i\rightarrow j}$. Hence, we have the inclusion relations among the sets in \ref{eqn:Fsa}. 

As to the points in $\mathfrak F_s^a$, everything is the same except that the augmented matrix $\begin{bmatrix}A_5\\ A_6\end{bmatrix}$ is column full-rank. 
Hence we consider its column reduced echelon form and use the set $\vec C_{[N_5']}$ to represent the rows its pivots lie in when we relabel the rows by integers $\vec C_{[N_6]}\cup \vec C_{[N_7]}$.  
The map $\iota_s$ is naturally bijective. 
\end{proof}

The above result can be generalized to a general star-shaped quiver $\mc X_g$ and its quiver mutation $\tilde{\mc Z}_g$, whose proof will be omitted.
\begin{lem}\label{lem:torusggeneralstar}
    The torus fixed locus $\mathfrak F_g^b$ can be described as follows
\begin{align}
     \{(\vec C_{[N_i]})_{i\in Q_0}\big|\,\vec C_{[N_i]}\subset \vec C_{[N_j]},\,\text{when }i\rightarrow j\in Q_1\text{ and } i\neq k\,,\,
    \vec C_{[N_k]}\subset \cup_{b=1}^l\vec C_{[N_{i_b}]}\}\,.
\end{align}
The torus fixed locus $\mathfrak F^a_g$ can be described as follows,
\begin{align}
    &\big\{(\vec C_{[N_i]})_{i\in Q_0}\big| \vec C_{[N_i]}\subset \vec C_{[N_j]},\,\text{when }i\rightarrow j\in Q_1\text{ and } i\neq k,j_1\ldots j_h\,,\nonumber\\
    &\vec C_{[N_{j_a}]}\subset \cup_{b=1}^l\vec C_{[N_{i_b}]},\forall a=1,\ldots,h\,,\,
    \vec C_{[N_{k}']}\subset \cup_{b=1}^l\vec C_{[N_{i_b}]}\,,\, \vec C_{[N_{j_a}]}\cap \vec C_{[N_{k}']}=\emptyset\,\big\}.
\end{align}
There is a bijection 
\begin{align}\label{eqn:iotagenstar}
    \iota:\mathfrak F^b_g\rightarrow \mathfrak F_g^a,
\end{align}
which preserves $\vec C_{[N_i]},i\neq k$ and sends $\vec C_{[N_k]}$ to $\cup_{b=1}^l\vec C_{[N_{i_b}]}\backslash \vec C_{[N_k]}$.
\end{lem}
Then we can easily prove that equivariant cohomology groups of star-shaped quivers are preserved by quiver mutations according to localization theorem \cite{ATIYAH19841},
\begin{equation}
    H_S^*(\mc X_s,\mathbb Q)\cong H^*_S(\tilde {\mc Z_s},\mathbb Q)\,,
\end{equation}
and 
\begin{equation}
    H_{\tilde S}^*(\mc X_g,\mathbb Q)\cong H^*_{\tilde S}(\tilde {\mc Z}_g,\mathbb Q)\,.
\end{equation}
\subsubsection{Equivariant cohomology groups of $D_3$ mutation equivalent quivers}\label{sec:torusfixedpointsD3}
Similarly as the previous subsection, the torus $R:=(\C^*)^{N_4}$ acts on all varieties that are mutation equivalent to $D_3$ and fixes finitely many points.
Denote by $\mathfrak{F}_i$ the torus fixed locus for the $i$-th variety $\mc X_i$ when there is no potential and those of $\mc Z_i$ when there is a potential function.

 Similar to the Lemma \ref{lem:Fsab} and Lemma \ref{lem:torusggeneralstar}, one can check that $R$-fixed loci $\mathfrak{F}_0$ and $\mathfrak F_1$ can be parameterized as follows,
\begin{equation}
    \mathfrak{F}_0=\big\{(\vec C_{[N_1]},\vec C_{[N_2]},\vec C_{[N_3]})\,\rvert \,\vec C_{[N_1]}, \vec C_{[N_2]}\subset\vec C_{[N_3]}\subset [N_4]\big\}\,
\end{equation}
and 
\begin{equation}
    \mathfrak{F}_1:=\big\{(\vec C_{[N_1]},\vec C_{[N_2]},\vec C_{[N_3']})\,\rvert\,\vec C_{[N_3']}\subset [N_4],\,\vec C_{[N_1]},\,\vec C_{[N_2]}\subset [N_4]\backslash \vec C_{[N_3']} \big\}\,,
\end{equation}
Their cardinalities are the same,
$\abs{\mathfrak{F}_1}=\abs{\mathfrak{F}_0}=C_{N_4}^{N_3}C_{N_3}^{N_1}C_{N_3}^{N_2}$.

\begin{lem}\label{lem:torusfixedF2}
The $R$-fixed locus $\mathfrak F_2$ can be expressed as 
\begin{align}
    \{\vec A_{[N_2]},
    \vec B_{[N_2]},
    \vec C_{[N_3']}|\, \vec A_{[N_2]}, \vec B_{[N_2]}\subset [N_4]; \vec C_{N_3'}\subset \vec A_{[N_2]};
    \vec C_{[N_3']}\cap \vec B_{[N_2]}=\emptyset
    \}\,.
\end{align}
There are in total $\abs{\mathfrak{F}_2}=C_{N_2}^{N_3'}C_{N_4}^{N_2}C_{N_3}^{N_2}$ torus fixed points which is equal to $\abs{\mathfrak F_1}$. Furthermore, there is a bijection
\begin{align}
    \iota_1: \mathfrak{F}_1\longrightarrow \mathfrak{F}_2\,,
\end{align}
via the map
\begin{align}
   (\vec A_{[N_1]},\vec B_{[N_2]},\vec C_{[N_3']})\longrightarrow 
    ([N_4]\backslash\vec A_{[N_1]},\vec B_{[N_2]},\vec C_{[N_3']})\,.
\end{align}
\end{lem}
\begin{proof}
As explained in Lemma \ref{lem:Fsab}, matrices $B_1$  $B_2$ $A_1$ can be represented by  $\vec A_{[N_2]}$ $\vec B_{[N_2]}$ and $\vec C_{[N_3']}$. 
The condition $\vec C_{[N_3']}\cap \vec B_{[N_2]}=\emptyset$ is equivalent to saying that $B_2B_1A_1=0$. 

The map $\iota_{1}$ sends an element $(\vec C_{[N_1]},\vec C_{[N_2]},\vec C_{[N_3']})\in \mathfrak{F}_1$ to $\mathfrak F_2$ because $\vec C_{[N_1]}\cap \vec C_{[N_3']}=\emptyset$ and $\vec C_{[N_2]}\cap \vec C_{[N_3']}=\emptyset$. 
\end{proof}
\iffalse
One quick corollary for the above lemma is that the equivariant cohomology groups of $\mc X_0$, $\mc Z_1$, and $\mc Z_2$ are isomorphic: 
\begin{equation}
    H^*_{R}(\mc X_0,\Q) \iso H^*_{R}(\mc Z_1,\Q)\iso H^*_{R}(\mc Z_2,\Q)
\end{equation}
via the localization theorem \cite{ATIYAH19841}.
\begin{lem}
We have the following describtion for $R$-fixed locus $\mathfrak F_3^-$ of $\mc Z_3^-$. 
\begin{align}
    \mathfrak F_3^-=\{ (\vec C_{[N_2]},\vec C_{[N_1]}, \vec C_{[N_3']})|\,\,N_3\geq \abs{\vec A\cap \vec B}\geq \max\{N_1,N_2\},\,
    \vec C\subset \vec A\cap \vec B\,
    \}\,.
\end{align}
.
\end{lem}
\begin{proof}
    The proof is a linear algebra argument, and is omitted.
\end{proof}

As to $\mc Z_3^+$, it is easy to prove that 
\begin{align}
    \mathfrak{F}_3^+=\{(\vec A_{[N_2]}, \vec B_{[N_1]},\vec C_{[N_3']}) | \vec A_{[N_2]}\cap \vec C_{[N_3']}=\emptyset,  \vec B_{[N_1]}\cap \vec C_{[N_3']} =\emptyset \}.
\end{align}
\fi
\begin{lem}
    The torus fixed points of $\mc Z_3$ are 
\begin{align}
   \mathfrak F_3 =\{(\vec A_{[N_2]},\vec B_{[N_1]},\vec C_{[N_4-N_3]})\big| \,\vec C_{[N_4-N_3]}\subset \vec A_{[N_2]}\cap\vec B_{[N_1]},\, \vec A_{[N_2]},\vec B_{[N_1]} \subset [N_4] \}
\end{align}
 and $\abs{\mathfrak F_3}=C_{N_4}^{N_3'}C_{N_3}^{N_3-N_1}C_{N_3}^{N_3-N_2}$.
There is a bijection 
\begin{align}
   \iota_2: \mathfrak F_2\rightarrow \mathfrak F_3
\end{align} 
sending $(\vec A_{[N_2]},\vec B_{[N_2]},\vec C_{[N_3']})$ to $(\vec A_{[N_2]}, [N_4]\backslash \vec B_{[N_2]}, \vec C_{[N_3']})$.
\end{lem}
\begin{proof}
    The sets $\vec A_{[N_2]}:=
    \{i_1,\ldots,i_{N_2}\},\vec B_{[N_1]}=\{j_1,\ldots,j_{N_1}\}$ are row numbers of pivots of column-reduced-echelon forms of matrices $B_1,B_2$. 
    Due to the relation $B_1A_1+B_2A_2=0$, columns of matrix $A_1,A_2$ are $\pm \vec e_i$ where $\vec e_i$ is a column vector with $i$-th component 1 and others zero. Furthermore, if one column of $A_1$ is $\vec e_a$ and then the same column  of $A_2$ is $-\vec e_b$ such that $i_a=j_b$. The set $\vec C_{[N_3']}$ is the set of distinct integers $\{i_a\}$ of number $N_3'$ such that there is a $j_b\in \vec B_{[N_2]}$ with $j_b=i_a$.
    
    One can check the bijection easily, so it is omitted. 
\end{proof}

The torus fixed locus $\mathfrak F_9$ is exactly the same with $\mathfrak F_0$.
The torus fixed loci $\mathfrak F_i$, $i=4,5,6, 7,10,11$ are complicated, and they are given in Appendix \ref{sec:Appsemistablelocus}. 

One can check that $\abs{\mathfrak F_i}$ are equal for all $i$. Hence, we know that the equivariant cohomology groups of all $\mc X_i$ (when there is no potential) or $\mc Z_i$ (when there is a potential function) are isomorphic. 
\iffalse
\begin{rem}\label{rem:explainNis}
In the Figure \ref{fig:star1}, we impose the condition that $N_4=N_1+N_2$. Otherwise, the integer vectors assigned to the quiver in Figure \ref{fig:D4-6mutations} $(c)$  would be $(N_3-N_1,N_3-N_2,N_3+N_4-N_1-N_2,N_4)$.
In this situation, the number of torus fixed points in $\mc X_6$ is different from that of $\mc X_0$. 
 Then, we cannot identify the equivariant cohomology groups of $\mc X_6$ with the original one $\mc X_0$. 
\end{rem}
\fi
\subsection{Review for a fundamental building block}\label{sec:fundamentalblock} 
We refer to \cite{benini2015cluster,donghai,zhang2021gromov} for the detailed discussion of the fundamental building block. In this subsection, we only display statements we need. 

From now on, we will let the equivariant parameter be $u=1$ in the $I$-functions and denote $I^{\mc X, S}(\vec q):=I^{\mc X, S}(\vec q, 1)$, $I_\beta^{\mc X, S}(\vec q):=I^{\mc X, S}_\beta(\vec q, 1)$.

The \textit{fundamental building block} is about the following Figure \ref{fig:fundamental} containing two mutation-related quivers.
\iffalse
\begin{figure}[H]
    \centering
    \includegraphics[width=4.6in]{Grassmannian1.png}
    \caption{$n\geq m$.}
    \label{fig:fundamental}
\end{figure}
\fi
\noindent
\begin{figure}[H]
    \centering
    \begin{tikzpicture}   
        %Grassmannian
         \node[draw, circle, minimum size=0.8cm] (node-0) at (0,0){$r$};
        \node[draw, minimum width=0.8cm, minimum height=0.8cm] (node-1) at (-2,0){$m$};
        \node[draw, minimum width=0.8cm, minimum height=0.8cm] (node-2) at (2,0){$n$};   
       \draw[-stealth] (node-1) -- (node-0); 
        \draw[-stealth] (node-0) -- (node-2);
        \node at (3.5,0){$\xrightarrow[]{\mu}$};
%Dual
         \node[draw, circle, minimum size=0.8cm] (node-d0) at (7,0){$r'$};
        \node[draw, minimum width=0.8cm, minimum height=0.8cm] (node-d1) at (5,0){$m$};
        \node[draw, minimum width=0.8cm, minimum height=0.8cm] (node-d2) at (9,0){$n$};   
       \draw[-stealth] (node-d0) --node [midway, below]{$A_1$} (node-d1); 
        \draw[-stealth] (node-d2) -- node [midway, below]{$A_2$}(node-d0);
        \draw[-stealth] (node-d1) to [bend left] node [midway, above]{$B$} (node-d2);
        \end{tikzpicture}
    \caption{Assume $n\geq m$. $r'=m-r$. The potential of the right hand side quiver is $W=tr(BA_2A_1)$. }
    \label{fig:fundamental}
\end{figure}
\noindent
For the left hand side, choose character $\theta(g)=\det(g)^{\sigma}$ with $\sigma>0$, and then the quiver variety is the total space of $m$-copies of tautological bundle over Grassmannian $S^{\oplus m}\rightarrow Gr(r,n)$.  The character for the right hand side one will be $\theta(g)=\det(g)^{\tilde\sigma}$ with $\tilde\sigma<0$. The critical locus of superpotential is $\{BA_2=0\}\sslash_{\theta} G\subset \mathbb C^{m\times n}\times Gr(n-r,n)$ which can be viewed as the total space of $m$-copies of the dual of quotient bundles over the dual Grassmannian $(Q^\vee)^{\oplus m}\rightarrow Gr(n-r,n)$. 
We denote the two varieties by $Gr$ and $Gr^\vee$ respectively. 

There is a good torus action $S'=(\C^*)^{m}\times (\C^*)^{n}$ on the two varieties. 
Let $\mathfrak F$ be the torus fixed locus of $Gr$ and $\mathfrak F^\vee$ that of $Gr^\vee$. Adopting the notations and conventions we have used in the above section, one can check that 
\begin{equation}\label{eqn:torufixedGrass}
    \mathfrak F=\big\{\vec C_{[r]}=\{f_1<\ldots<f_r\} \subset [n]\big\},\,\,\,
         \mathfrak F^\vee=\big\{\vec C_{[n-r]}=\{f_1'<\ldots<f_{n-r}'\} \subset [n]\big\}\,.
\end{equation}
The canonical bijective map from $\mathfrak{F}$ to $\mathfrak F^\vee$ can be defined as
\begin{equation}\label{eqn:iotaGrass}
    \iota^{Gr}: (\vec C_{[r]}\subset [n])\rightarrow ([n]\backslash\vec C_{[r]}\subset [n])\,.
\end{equation}

Denote the equivariant parameters of $(\C^*)^{m}$-action by $\eta_A$,$A=1,\ldots,m$, and the equivariant parameters of $(\C^*)^{n}$-action by $\lambda_F$, $F=1,\ldots,n$. 
The equivariant quasimap small ${I}$-function of $Gr$, denoted by ${I}^{Gr,S'}(q)$, can be written as follows by Lemma \ref{lem:Iofquiver}
\begin{equation}
  p^*{I}^{Gr,S'}(q)=\sum_{\vec d\in \Z^r_{\geq 0}} 
   \prod_{I\neq J}^{r}\frac{\prod_{l\leq d_I-d_J}(x_I-x_J+l)}{\prod_{l\leq 0}(x_I-x_J+l)}  \prod_{I=1}^{r}\frac{\prod_{A=1}^{m}\prod_{l=0}^{d_I-1}(-x_I+\eta_A-l)}{ \prod_{F=1}^{n}\prod_{l=1}^{d_I}(x_I-\lambda_F+l)}q^{\abs{\vec d}}\,.
\end{equation}
The equivariant quasimap small $I$-function of $Gr^\vee$ denoted by $I^{Gr^\vee, S'}(q')$ is
\begin{equation}
    p^*{I}^{Gr^\vee,S'}(q')=\sum_{\vec d\in \Z^{n-r}_{\leq 0}}
    \prod_{I\neq J}^{n-r}\frac{\prod_{l\leq d_I-d_J}(x_I-x_J+l)}{\prod_{l\leq 0}(x_I-x_J+l)}
    \prod_{I=1}^{n-r}\frac{\prod_{A=1}^{m}\prod_{l=1}^{-d_I}(-x_I+\eta_A+l)}{ \prod_{F=1}^{n}\prod_{l=1}^{-d_I}(-x_I+\lambda_F+l)}(q')^{\abs{\vec d}}\,.
\end{equation}
For an arbitrary $S'$-fixed point $P=(\{f_1<\ldots<f_{r}\})  \in \mathfrak F$, denote the image $\iota^{Gr}(P)$ by $P^c=(\{f_1'<\ldots<f_{n-r}'\}=[n]\backslash \vec C_{[r]})\in \mathfrak F^\vee$. The restriction of $I^{Gr,S'}$ to $P$ is 
\begin{equation}\label{eqn:Ifungrbmres}
   p^*I^{Gr,S'}(q)\rvert_{P}=\sum_{\vec d\in \Z^r_{\geq 0}}
    \prod_{I\neq J}^{r}\frac{\prod_{l\leq d_I-d_J}(\lambda_{f_I}-\lambda_{f_J}+l)}{\prod_{l\leq 0}(\lambda_{f_I}-\lambda_{f_J}+l)}
\prod_{I=1}^{r}\frac{\prod_{A=1}^{m}\prod_{l=0}^{d_I-1}(-\lambda_{f_I}+\eta_A-l)}
    {\prod_{F=1}^{n}\prod_{l=1}^{d_I}(\lambda_{f_I}-\lambda_F+l)}q^{\abs{\vec d}}\,.
\end{equation}
The restriction of $I^{Gr^\vee,S'}$ to $P^c$ is
\begin{equation}\label{eqn:Ifungramres}
   p^*I^{Gr^\vee,S'}(q')\rvert_{P^c}=\sum_{\vec d\in \Z^{n-r}_{\leq  0}}
    \prod_{I\neq J}^{n-r}\frac{\prod_{l\leq d_I-d_J}(\lambda_{f_I'}-\lambda_{f_J'}+l)}{\prod_{l\leq 0}(\lambda_{f_I'}-\lambda_{f_J'}+l)}
    \prod_{I=1}^{n-r}\frac{\prod_{A=1}^{m}\prod_{l=1}^{-d_I}(-\lambda_{f_I'}+\eta_A+l)}{ \prod_{F=1}^{n}\prod_{l=1}^{-d_I}(-\lambda_{f_I'}+\lambda_F+l)}{q'}^{\abs{\vec d}}\,.
\end{equation}
\begin{thm}[\cite{benini2015cluster,donghai}] \label{thm:Haidong}
\begin{enumerate} 
    \item When $n\geq m+2$, $p^*{I}^{Gr,S'}(q)\rvert_{P}=p^*{I}^{Gr^\vee,S'}(q^{-1})\rvert_{P^c}$.
    \item When $n=m+1$, $p^*{I}^{Gr,S'}(q)\rvert_{P}={e}^{(-1)^{n-r-1}q}\, p^*{I}^{Gr^\vee,S'}(q^{-1})\rvert_{P^c}$.
    \item When $n=m$, 
    $p^*{I}^{Gr,S'}(q)\rvert_{P}=(1+(-1)^{n-r}q)^{\sum_{A=1}^{m}\eta_A-\sum_{F=1}^{n}\lambda_{F}+n-r}p^*{I}^{Gr^\vee,S'}(q^{-1})\rvert_{P^c}$.
\end{enumerate}
\end{thm}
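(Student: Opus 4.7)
The plan is to prove each case by directly manipulating the explicit formulas \eqref{eqn:Ifungrbmres} and \eqref{eqn:Ifungramres} at the chosen fixed points and setting up a termwise bijection between the two hypergeometric sums. First I would use the splitting $[n] = P \sqcup P^c$ to decompose the products $\prod_{F=1}^n$ on both sides. On the LHS at $P = \{f_1 < \cdots < f_r\}$, the factor $\prod_{F=1}^{n}\prod_{l=1}^{d_I}(\lambda_{f_I}-\lambda_F+l)$ splits into a piece indexed by $F = f_J$ for $J \in [r]$ and a piece indexed by $F = f_{J'}'$ for $J' \in [n-r]$: the $J = I$ term inside the first piece equals $d_I!$, while the remaining Vandermonde-type factors combine with the $\prod_{I \neq J}$ numerator to produce clean Pochhammer ratios. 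An analogous manipulation applies to the RHS after substituting $\vec e = -\vec d \in \Z^{n-r}_{\geq 0}$, so that $(q^{-1})^{|\vec d|} = q^{|\vec e|}$; this puts both sides in a common form, namely sums over non-negative integer lattices.

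Next I would produce a term-by-term bijection between the two sums. The key device is the Pochhammer reflection $\prod_{l=1}^{k}(x+l) = (-1)^{k}\prod_{l=0}^{k-1}(-x-l)$, which allows one to move factors across the numerator and denominator and to interchange products indexed by $P$ and by $P^c$. After applying this reflection uniformly to the $P^c$-indexed products on the LHS and the $P$-indexed products on the RHS, the two sides are brought into formally matching shapes modulo an explicit scalar correction depending only on $q$ and the equivariant parameters.

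The final step is to analyze the three regimes separately according to the excess $n - m$. In the stable regime $n \geq m+2$, the bijection yields a termwise equality with no residual factor, proving (1). When $n = m+1$, one factor $\prod_F(\cdots)$ in the denominator fails to cancel with a matching factor $\prod_A(\cdots)$ in the numerator, leaving a residual sum of geometric shape $\sum_{d \geq 0} \bigl((-1)^{n-r}q\bigr)^{d}/d!$ which sums to $e^{(-1)^{n-r}q}$. When $n = m$, two such mismatches appear but their degrees in the summation index coincide, so the residue becomes a binomial series summing to $\bigl(1+(-1)^{n-r}q\bigr)^{\alpha}$, with $\alpha = \sum_A \eta_A - \sum_F \lambda_F + (n-r)$ arising from the leading coefficient in the ratio of the two mismatched polynomials in the summation variable.

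The main obstacle will be bookkeeping: tracking all signs (especially the $(-1)^{n-r}$ factors produced by Pochhammer reflections), identifying the correct exponent $\alpha$ in case (3), and interpreting the ``infinite'' products $\prod_{l \leq d_I - d_J}$ uniformly as ratios so that the bijection is an honest identity of formal power series in $q$ and in the equivariant parameters $\lambda_F, \eta_A$. One should also verify that the geometric and binomial series producing the correction factors factor out cleanly and do not interact with the matching of the main terms.
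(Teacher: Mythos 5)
There is a genuine gap, and it is worth noting first that the paper itself gives no proof of this statement: Theorem \ref{thm:Haidong} is imported wholesale from \cite{donghai,benini2015cluster}, so there is no in-paper argument to compare against. Judged on its own terms, your proposal fails at its central step, the ``term-by-term bijection between the two sums.'' The left-hand side is a sum over $\vec d\in\Z^r_{\geq 0}$ and the right-hand side (after substituting $\vec e=-\vec d$) is a sum over $\vec e\in\Z^{n-r}_{\geq 0}$; since the degree of $q$ is the total weight $\abs{\vec d}$ resp.\ $\abs{\vec e}$, the coefficient of $q^k$ on the left is a sum of $\binom{k+r-1}{r-1}$ terms while on the right it is a sum of $\binom{k+n-r-1}{n-r-1}$ terms. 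For $r\neq n-r$ these cardinalities differ already at $k=1$, so no degree-preserving bijection of summands can exist. Already the simplest instance $r=1$, $n=3$, $m=0$ shows that a single term on one side must equal a \emph{sum} of two terms on the other, i.e.\ the identity is a partial-fractions/resummation statement in the equivariant parameters, not a matching of monomial-indexed terms. The Pochhammer reflection $\prod_{l=1}^{k}(x+l)=(-1)^k\prod_{l=0}^{k-1}(-x-l)$ is a legitimate bookkeeping tool, but it cannot change the dimension of the summation lattice.

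The same problem undermines your treatment of the correction factors in cases (2) and (3): the series $e^{(-1)^{n-r}q}$ and $(1+(-1)^{n-r}q)^{\alpha}$ do not arise as a residual geometric or binomial factor that splits off multiplicatively from a termwise matching; they emerge only after the full resummation (in the standard derivations, from the contribution at infinity when one rewrites each $I$-function as a Mellin--Barnes-type contour integral and closes the contour around the complementary family of poles indexed by $P^c$, or equivalently from the asymptotics when one identifies both sides as solutions of the same difference equation). If you want a self-contained combinatorial proof, you would need to replace the bijection by an honest rational-function identity at each degree $k$ --- for instance by comparing residues of both sides at the poles in the $\lambda$'s, or by invoking a closed hypergeometric summation --- and then track how the excess $n-m\in\{0,1,\geq 2\}$ controls the degree of the integrand at infinity, which is exactly where the three cases and the exponent $\alpha=\sum_A\eta_A-\sum_F\lambda_F+n-r$ come from. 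As written, the proposal does not contain the idea that makes the theorem true.
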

\begin{proof}
    The proof of the second and third items can be found in Appendix A of physics work \cite{benini2015cluster} and the proof of the 1st item is given in Hai Dong's work which is unavailable online. We refer to the whole proof of the theorem  in \cite[Appendix]{zhang2021gromov}.
\end{proof}
\iffalse
\begin{rem}
    We give a non-rigorous reason why we define the maps $\iota_s$ $\iota$ and $\iota_i$ between torus fixed loci in the way described in Section \ref{sec:torusfixedpointsstar} and \ref{sec:torusfixedpointsD3}. 
    
    They are actually very natural maps. 
    We first consider the Grassmannian $Gr(r,n)$ and the dual Grassmannian $Gr(n-r,n)$ which are isomorphic, related by a quiver mutation, and admit a common torus action $(\mathbb C^*)^n$. The torus-fixed points of $Gr(r,n)$ and $Gr(n-r,n)$ are parameterized by sets as \eqref{eqn:torufixedGrass}.   
    The isomorphism $Gr(r,N)\rightarrow Gr(N-r,N)$ is given by taking quotient of an inclusion $ \mathbb C^r\rightarrow \mathbb C^N$. 
    This isomorphism restricted to torus fixed points is by mapping $\vec C_{[r]}$ to $[N]\backslash \vec C_{[r]}$. 
    The map $\iota^{Gr}$ in \eqref{eqn:iotaGrass} is also defined by restriction of the isomorphism $Gr(r,n)\rightarrow Gr(n-r,n)$ to torus fixed points. 

    For the star-shaped quiver in Example \ref{ex:ggeneral} and the quiver after quiver mutation in Exmaple \ref{ex:ggeneralstarmc} with condition $N_f(k)\geq N_a(k)$, we can view the corresponding varieties as a $Gr$ fibration over a variety $B$ and $Gr^\vee$ fibration over $B$. The map $\iota$ in Lemma \ref{lem:torusggeneralstar} is induced by the morphism between fibers $Gr$ and $Gr^\vee$. Similarly, the maps $\iota_i$ in Section \ref{sec:torusfixedpointsD3} are constructed in a similar way. 
\end{rem}
\fi
\subsection{Proof for Theorem \ref{thm:1st}: the equivalence between  ${I^{\mc{X}_s, S}}$ and ${I^{\tilde{\mc Z}_s,S}}$}\label{sec:prooftheorem1}
In this section, we will prove the Theorem \ref{thm:1st}. Moreover, we will explain that it can be generalized to a general star-shaped quiver in Definition \ref{defn:starshapedquiver}.

We first utilize the localization theorem to investigate the relation between $I^{\mc X_s,S}(\vec q)\rvert_{P}$ and $I^{\tilde{\mc Z}_s,S}(\vec q')\rvert_{\iota_s(P)}$ for each pair of $S$-fixed points $(P, \iota(P))\in \mathfrak F_s^b\times \mathfrak F_s^a$.
The $I$-effective classes of $\mc X_s$ and $\tilde{\mc Z}_s$ are as follows. 
\begin{align}
    \mathrm{Eff}_T^{s}&=
    \Big\{(\vec n^i)_{i\in Q_0\backslash Q_f}\in \prod_{i=1}^{7} \mathbb Z^{N_i}_{\geq 0}
    \big|\, \forall\, i\rightarrow j\in Q_1, i\neq 5, \exists \text{ distinct } 
    \{J_I\}_{I=1}^{N_i}
    \subset [N_j], \mathrm{s.t.} n^i_{I}-n^j_{J_I}\geq 0;\nonumber\\
    &\exists \text{ distinct }\{k_{I}\}\subset [N_6]\sqcup [N_7], \mathrm{s.t.} n^5_I-n^6_{k_I}\geq 0 \text{ if } k_I\in [N_6],\text{ or }n^5_I-n^7_{k_I}\geq 0 \text{ if }k_I\in [N_7]
    \Big\}.\nonumber\\
    \mathrm{Eff}_T^{ms}&=
    \Big\{
    (\vec n^i)_{i\neq 5}\times \vec n^5\in \prod_{i\neq 5} \mathbb Z^{N_i}_{\geq 0}\times \mathbb Z^{N_5'}\big|
    \forall\, i\rightarrow j\in Q_1, i,j\neq 5, \exists \text{ distinct } 
    \{J_I\}_{I=1}^{N_i}
    \subset [N_j], \nonumber\\
    &\mathrm{s.t.} n^i_{I}-n^j_{J_I}\geq 0; 
    \exists \text{ distinct }\{k_{I}\}\subset [N_6]\sqcup [N_7], \mathrm{s.t.} -n^5_I+n^6_{k_I}\geq 0 \text{ if } k_I\in [N_6],\nonumber\\
    &
    \text{ or }-n^5_I+n^7_{k_I}\geq 0 \text{ if }k_I\in [N_7]
    \Big\}
\end{align}
In the above expression, we let $\vec n^i=0$ if $i\in Q_f$.
Without loss of generality, we choose a torus fixed point $P=(\vec C_{[N_i]})\in \mathfrak F_s^b$ described as follows.
\begin{itemize}
    \item  Let $\vec C_{[N_6]}=[N_6]$ and $\vec C_{[N_7]}=[N_7]$. We relabel the integers $[N_6]\cup [N_7]$ by $\{1,\ldots,N_6+N_7\}$, choose $\vec C_{[N_i]}=[N_i]\subset[N_6+N_7]$ for $i=1,\ldots,5$.
    \end{itemize}

\begin{prop}\label{prop:Istartransform}
For the pair of $S$-fixed points $(P,\iota_s(P))\in \mathfrak F_s^b\times \mathfrak F_s^a$, 
the restricted quasimap small $I$-functions $p^*I^{\mc X_s,S}\rvert_P$  and $p^*I^{\tilde{\mc X}_s,S}\rvert_{\iota(P)}$ satisfy the following relations.
\begin{enumerate}[(a)]
    \item When $N_6+N_7\geq N_3+N_4+2$, 
    \begin{equation}
       p^*I^{\mc X_s, S}(\vec q)\rvert_{P}=p^*I^{{\tilde{\mc Z}_s}, S}(\vec q')\rvert_{\iota_s(P)}\,,
    \end{equation}
    under the change of K\"ahler variables 
    \begin{align}\label{eqn:variablechange}
        q_5'=q_5^{-1}, \, q_6'=q_6q_5,\,q_7'=q_7q_5,\,q_i'=q_i, \text{ for } i\neq 5,6,7\,.
    \end{align}
     \item When $N_6+N_7= N_3+N_4+1$, 
    \begin{equation}
    p^*I^{\mc X_s, S}(\vec q)\rvert_{P}=e^{(-1)^{N_5'-1}q_5}p^*I^{{\tilde{\mc Z}_s}, S}(\vec q')\rvert_{\iota_s(P)}\,,
    \end{equation}
    and the map between K\"ahler variables is as \eqref{eqn:variablechange}.
    \item When $N_6+N_7= N_3+N_4$,
    \begin{equation}
    p^*I^{\mc X_s, S}(\vec q)\rvert_{P}=
     (1+(-1)^{N_5'}q_5)^
    {\sum_{i=3,4}\sum_{A=1}^{N_i}x^{i}_A-\sum_{j=6,7}\sum_{B=1}^{N_i}x^j_B+N_5'}
    p^* I^{{\tilde{\mc Z}_s}, S}(\vec q')\rvert_{\iota_s(P)}\,,
    \end{equation}
   under 
    \begin{align}
      &q_3'=q_3(1+(-1)^{N_5'}q_5)\,,
       q_4'=q_4(1+(-1)^{N_5'}q_5)\,,
       q_5'=q_5^{-1}\,,\nonumber\\
       &q_6'=\frac{q_6q_5}{(1+(-1)^{N_5'}q_5)}\,,
       q_7'=\frac{q_7q_5}{(1+(-1)^{N_5'}q_5)}\,, q_i'=q_i,\text{ for } i=1,2\,.
    \end{align}
\end{enumerate}
\end{prop}
\begin{proof}
By using Lemma \ref{lem:Iofquiver}, the quasimap small $I$-function of $\mc X_s$ can be written as follows, 
\begin{align}\label{eqn:Istar}
    I^{\mc X_s,S}(\vec q)=&\sum_{(\vec n^i)\in \op{Eff}_T^s}
    (\text{Irrel})\bigcdot 
    \prod_{I=1}^{N_5}\left(
    \prod_{i=3,4}\prod_{A=1}^{N_i}
    \frac{\prod_{l\leq 0}(x^i_A-x^5_I+l)}{\prod_{l\leq n^i_A-n^5_I}(x^i_A-x^5_I+l)}
    \right)\nonumber\\
    & \prod_{I\neq J}^{N_5}
    \frac{\prod_{l\leq n^5_I-n^5_J}(x^5_I-x^5_J+l)}{\prod_{l\leq 0}(x^5_I-x^5_J+l)}   \prod_{i=6,7}\prod_{B=1}^{N_i}\prod_{I=1}^{N_5}\frac{\prod_{l\leq 0}(x^5_I-x_B^i+l)}{\prod_{l\leq n^5_I-n_B^i}(x^5_I-x_B^i+l)}
    \prod_{i\in Q_0\backslash Q_f}q_i^{\abs{\vec n^i}}\,,
\end{align}
where the irrelevant factor $(\mathrm{Irrel})$ represents all factors containing no ingredients (Chern roots) of node $5$.

Restricted to the torus fixed point $P$, we have $x^6_B\rvert_P=\lambda_{B},\, x^7_B|_{P}=\lambda_{N_8+B}$. 
Relabeling the set $[N_6]\cup [N_7]$ by $[N_6+N_7]$, 
we rewrite the set $\{\lambda_{B}\}_{B=1}^{N_6}\cup \{\lambda_{B}\}_{B=1}^{N_7}$ as $\{\zeta_F \}_{F=1}^{N_6+N_7}$. 
Then restricted to $P$, $x^i_I\rvert_P=\zeta_I$, $i=1,2,3,4,5$, and $x^6_B\rvert_P=\zeta_B, x^7_B\rvert_P=\zeta_{N_6+B}$. 
Therefore, we have
\begin{subequations}
    \begin{align}
    p^*I^{\mc X_s,S}(\vec q)\rvert_{P}=&\sum_{(\vec n^i)\in \op{Eff}_T^s}
    (\text{Irrel})\bigcdot 
    \prod_{I=1}^{N_5}\left(
    \prod_{i=3,4}\prod_{A=1}^{N_i}
    \frac{\prod_{l\leq 0}(\zeta_A-\zeta_I+l)}{\prod_{l\leq n^i_A-n^5_I}(\zeta_A-\zeta_I+l)}
    \right)\label{eqn:ItoP0}\\
    & \prod_{I\neq J}^{N_5}
    \frac{\prod_{l\leq n^5_I-n^5_J}(\zeta_I-\zeta_J+l)}{\prod_{l\leq 0}(\zeta_I-\zeta_J+l)}   \prod_{F=1}^{N_6+N_7}\prod_{I=1}^{N_5}\frac{\prod_{l\leq 0}(\zeta_I-\zeta_F+l)}{\prod_{l\leq n^5_I-n_F}(\zeta_I-\zeta_F+l)}
    \prod_{i\in Q_0\backslash Q_f}q_i^{\abs{\vec n^i}}\,,\label{eqn:ItoP}
\end{align}
\end{subequations}
Notice that $m_I:=n^5_I-n_I\geq 0$. 
Replace $n_I^5=m_I+n_I$, 
and we can transform $p^*I^{\mc X_s, S}(\vec q)\rvert_{P}$ to the following formula by fixing $n^i_A$ and $n_I$ and disregarding the sum over $n^i$ for $i\neq 5$ and the irrelevant part,
\begin{align}\label{eqn:Iofstar}
     p^*I^{\mc X_s,S}(\vec q)\rvert_{P}=&\sum_{\vec m\in \Z^{N_5}_{\geq 0}} 
    \prod_{I=1}^{N_5}\left(
    \prod_{i=3,4}\prod_{A=1}^{N_i}
    \frac{\prod_{l\leq 0}(\zeta_A-\zeta_I+l)}{\prod_{l\leq n^i_A-n_I-m_I}(\zeta_A-\zeta_I+l)}
    \right)\prod_{i\in Q_0\backslash Q_f,i\neq 5}q_i^{\abs{\vec n^i}}
    \nonumber\\
    & \prod_{I\neq J}^{N_5}
    \frac{\prod_{l\leq n_I-n_J+m_I-m_J}(\zeta_I-\zeta_J+l)}{\prod_{l\leq 0}(\zeta_I-\zeta_J+l)}   \prod_{F=1}^{N_6+N_7}\prod_{I=1}^{N_5}\frac{\prod_{l\leq 0}(\zeta_I-\zeta_F+l)}{\prod_{l\leq m_I+n_I-n_F}(\zeta_I-\zeta_F+l)}
    q_5^{\abs{\vec m}+\sum_{I=1}^{N_5}n_{I}}\,.
\end{align}
We do some combinatorics as follows: we multiply the equation \eqref{eqn:Iofstar} by the following trivial formula,
\begin{align*}
    \prod_{I=1}^{N_5}\prod_{i=3,4}
    \prod_{A=1}^{N_i}
    \frac{\prod_{l\leq n^i_A-n_I}(\zeta_A-\zeta_I+l)}{\prod_{l\leq n^i_A-n_I}(\zeta_A-\zeta_I+l)}
    \prod_{I\neq J}^{N_5}
    \frac{\prod_{l\leq n_I-n_J}(\zeta_I-\zeta_J+l)}{\prod_{l\leq n_I-n_J}(\zeta_I-\zeta_J+l)}   \prod_{F=1}^{N_6+N_7}\prod_{I=1}^{N_5}\frac{\prod_{l\leq n_I-n_F}(\zeta_I-\zeta_F+l)}{\prod_{l\leq n_I-n_F}(\zeta_I-\zeta_F+l)}
\end{align*}
we can transform \eqref{eqn:Iofstar} to 
\begin{subequations}
    \begin{align}
     p^*I^{\mc X_s,S}(\vec q)\rvert_{P}=&\sum_{\vec m\in \Z^{N_5}_{\geq 0}}
    \prod_{i=3,4}\prod_{A=1}^{N_i}
    \prod_{I=1}^{N_5}
    \frac{\prod_{l\leq n^i_A-n_I}(\zeta_A-\zeta_I+l)}{\prod_{l\leq n^i_A-n_I-m_I}(\zeta_A-\zeta_I+l)}
    \prod_{i\neq 5}q_i^{\abs{\vec n^i}}q_5^{\abs{\vec m}+\sum_{I=1}^{N_5}n_{I}}\label{eqn:Is1}\\
    & \prod_{I\neq J}^{N_5}
    \frac{\prod_{l\leq n_I-n_J+m_I-m_J}(\zeta_I-\zeta_J+l)}{\prod_{l\leq n_I-n_J}(\zeta_I-\zeta_J+l)}   \prod_{F=1}^{N_6+N_7}\prod_{I=1}^{N_5}\frac{\prod_{l\leq n_I-n_F}(\zeta_I-\zeta_F+l)}{\prod_{l\leq m_I+n_I-n_F}(\zeta_I-\zeta_F+l)}\label{eqn:Is2}\\
    &
\prod_{i=3,4}\prod_{A=1}^{N_i}\prod_{I=1}^{N_5}
    \frac{\prod_{l\leq 0}(\zeta_A-\zeta_I+l)}{\prod_{l\leq n^i_A-n_I}(\zeta_A-\zeta_I+l)}  \prod_{I=1}^{N_5}\prod_{F=N_5+1}^{N_6+N_7}\frac{\prod_{l\leq 0}(\zeta_I-\zeta_F+l)}{\prod_{l\leq n_I-n_F}(\zeta_I-\zeta_F+l)}\label{eqn:Is3}.
\end{align}
\end{subequations}
Notice that for fixed $\vec n^i$, $i\neq 5$, the formula \eqref{eqn:Is3} is fixed, and  \eqref{eqn:Is1} and  \eqref{eqn:Is2} can be viewed as the restriction of degree $\vec m$-term of the equivariant quasimap small $I$-function of $S^{\oplus(N_3+N_4)}\rightarrow Gr(N_5,N_6+N_7)$ to $([N_5]\subset [N_6+N_7])$ in \eqref{eqn:Ifungrbmres}, if we let the equivariant parameters of $(\C^*)^{N_3+N_4}$-action be $\zeta_A+n^i_A$, $i=3,4,A=1,\ldots,N_i$, and equivariant parameters of $(\C^*)^{N_6+N_7}$ be  $\zeta_F+n_F$, $F=1,\ldots,N_6+N_7$.

On the other hand, consider the restriction of $p^*I^{\tilde{\mc Z}_s, R}(\vec q')$ to $\iota_s(P)$. The restrictions of the ingredients are $x^i_I\rvert_{\iota_s(P)}=\zeta_I$, for $i=1,2,3,4$, $x^5_I\rvert_{\iota_s(P)}=\zeta_{N_5+I}$, $x^6_I\rvert_{\iota_s(P)}=\zeta_I$, $x^7_I\rvert_{\iota_s(P)}=\zeta_{N_6+I}$. Then, 
\begin{align}\label{eqn:ImsP1}
    p^*I^{\tilde{\mc Z}_s, S}(\vec q')\rvert_{\iota_s(P)}
    &=\sum_{(\vec n^i)\in \op{Eff}_T^{ms}}(\mathrm{Irrel})\bigcdot
\prod_{i=3,4}\prod_{A=1}^{N_i}
\prod_{I=1}^{N_5'}\frac{\prod_{l\leq n^i_A-n^5_I}(\zeta_A-\zeta_{N_5+I}+l)}{\prod_{l\leq 0}(\zeta_A-\zeta_{N_5+I}+l)}
\nonumber\\
    &\prod_{I\neq J}^{N_5'}
    \frac{\prod_{l\leq n^5_I-n^5_J}(\zeta_{N_5+I}-\zeta_{N_5+J}+l)}{\prod_{l\leq 0}(\zeta_{N_5+I}-\zeta_{N_5+J}+l)}\prod_{F=1}^{N_6+N_7}\prod_{I=1}^{N_5'}
    \frac{\prod_{l\leq 0}(-\zeta_{N_5+I}+\zeta_F+l)}{\prod_{l\leq -n^5_I+n_F}(-\zeta_{N_5+I}+\zeta_F+l)}\nonumber\\
    &\prod_{i=3,4}\prod_{A=1}^{N_i}
    \prod_{F=1}^{N_6+N_7}
    \frac{\prod_{l\leq 0}(\zeta_A-\zeta_F+l)}{\prod_{l\leq n^i_A-n_F}(\zeta_A-\zeta_F+l)}\prod_{i\in Q_0\backslash Q_f}(q_i')^{\abs{\vec n^i}}\,.
\end{align}
By observation, we must have $m_I=-n^5_I+n_{N_5+I}\geq 0$. Otherwise, the corresponding term would vanish. 
We substitute $n_I^5=n_{N_5+I}-m_I$, do similar combinatorics as what we have done to \eqref{eqn:Iofstar}, and transform the above $I$-function to the following formula by fixing $n^i_A,i\neq 5$ and disregarding the irrelevant part,
\begin{subequations}
    \begin{align}
    &p^*I^{\tilde{\mc Z}_s, S}(\vec q')\rvert_{\iota_s(P)}
    =\sum_{\vec m^i\in \Z^{N_5'}_{\geq 0}}
    \prod_{I\neq J}^{N_5'}
    \frac{\prod_{l\leq n_{N_5+I}-n_{N_5+J}-m_I+m_J}(\zeta_{N_5+I}-\zeta_{N_5+J}+l)}{\prod_{l\leq n_{N_5+I}-n_{N_5+J}}(\zeta_{N_5+I}-\zeta_{N_5+J}+l)}
  \label{eqn:Ims1}\\
   &\prod_{i=3,4}\prod_{A=1}^{N_i}
 \prod_{I=1}^{N_5'}\frac{\prod_{l\leq n^i_A-n_{N_5+I}+m_I}(\zeta_A-\zeta_{N_5+I}+l)}{\prod_{l\leq n^i_A-n_{N_5+I}}(\zeta_A-\zeta_{N_5+I}+l)}
 \prod_{F=1}^{N_6+N_7}\prod_{I=1}^{N_5'}
    \frac{\prod_{l\leq -n_{N_5+I}+n_F}(-\zeta_{N_5+I}+\zeta_F+l)}{\prod_{l\leq -n_{N_5+I}+n_F+m_I}(-\zeta_{N_5+I}+\zeta_F+l)}\label{eqn:Ims2}\\  &\prod_{i=3,4}\prod_{A=1}^{N_i}\prod_{I=1}^{N_5}
    \frac{\prod_{l\leq 0}(\zeta_A-\zeta_I+l)}{\prod_{l\leq n^i_A-n_I}(\zeta_A-\zeta_I+l)}  \prod_{I=1}^{N_5}\prod_{F=N_5+1}^{N_6+N_7}\frac{\prod_{l\leq 0}(\zeta_I-\zeta_F+l)}{\prod_{l\leq n_I-n_F}(\zeta_I-\zeta_F+l)}(q_5')^{-\abs{\vec m}+\sum_{I=1}^{N_5'}n_{N_5+I}}\label{eqn:Ims3}.
\end{align}
\end{subequations}
We find that for fixed $\vec n^i, i\neq 5$, the formula \eqref{eqn:Ims3} is fixed, and \eqref{eqn:Ims1} and \eqref{eqn:Ims2} can be viewed as the restriction of degree $\vec m$ term of equivariant quasimap small $I$-function of $Gr^\vee$ to a torus fixed point $([N_6+N_7]\backslash [N_5]\subset [N_6+N_7])$, if we let equivariant parameters of $(\C^*)^{N_3+N_4}$ be $n^i_A+\zeta_A$, $i=3,4,\,A=1,\ldots,N_i$  and equivariant parameters of $(\C^*)^{N_6+N_7}$ be $n_F+\zeta_F$.

The irrelevant parts of $p^*I^{\mc X_s, S}\rvert_{P}$ and $p^*I^{\tilde{\mc Z}_s,S}\rvert_{\iota_s(P)}$ are equal, and formulas \eqref{eqn:Is3} and \eqref{eqn:Ims3} are equal for fixed $\vec n^i,\,i\neq 5$.
Formulas \eqref{eqn:Is1} \eqref{eqn:Is2}  and  \eqref{eqn:Ims1} \eqref{eqn:Ims2} are related by the fundamental building block in the Theorem \ref{thm:Haidong}.
In the two sets $\mathrm{Eff}_T^{s}$ and $\mathrm{Eff}_T^{ms}$, $\vec n^i$ for $i\neq 5$ are the same. For fixed $\vec n^i,\,i\neq 5$, the $\vec n^5$ in the two sets are related via the variable change. 
Hence, we have proved the proposition. 
\end{proof}
Notice that the above proposition can be extended to any pair of $S$-fixed points 
$(P,\iota_s(P))\in \mathfrak F_s^b \times \mathfrak F_s^a$.
Therefore, we have proved the Theorem \ref{thm:1st} for the star-shaped quiver by localization. 

\begin{cor}\label{cor:generalstar}
 For a general star-shaped quiver $\mc X_g$ and its quiver mutation $\tilde{\mc Z}_g$, their quasimap small $I$-functions restricted to a pair of torus fixed points $(Q,\iota(Q))\in \mathfrak F_g^b\times \mathfrak F_g^a$ are related in the same way as Proposition \ref{prop:Istartransform}. 
\end{cor}
\begin{proof}
   One can follow the proof in Proposition \ref{prop:Istartransform} step by step to prove it. 
   Without loss of generality, we choose a torus fixed point $Q=(\vec C_{[N_i]})\in \mathfrak F_g^b$ described as follows.
\begin{itemize}
    \item  For any node $p$ on the right hand side of node $k$, $\vec C_{[N_p]}=[N_p]$, in particular, $\vec C_{[N_{i_b}]}=[N_{i_b}]$ for $b=1,\ldots, l$. 
    \item  We relabel the set of integers $\bigcup_{b=1}^l\vec C_{[N_{i_b}]}$ by $\{1,\ldots,N_f(k)\}$, and choose $\vec C_{[N_i]}=[N_i]\subset[N_f(k)]$ for nodes $i$ on the left hand side of $k$.
    \end{itemize}
Denote the equivariant parameters of the torus $\prod_{i\in Q_f}(\mathbb C^*)^{N_i}$ by $\lambda_{F}^i$. 
Restricted to $Q$, we have 
$x^{i_b}_I|_Q=\lambda_{I}^i$, for $b=1,\ldots, l$. Rewrite the collection of equivariant parameters $\cup_{b=1}^l \{\lambda_{I}^i\}_{I=1}^{N_{i_b}}$ by $\zeta_1,\ldots, \zeta_{N_f(k)}$. Then $x^{j_a}_{I}\rvert_{Q}=\zeta_I$ for any $a=1\ldots h$.
Then the the quasimap small $I$-function restricted to $Q$ is similar with Equation \eqref{eqn:ItoP} except that the range of product for $i$ in the formula \eqref{eqn:ItoP0} is $1,\ldots,h$ instead of $3,4$, and the product for $F$ in the formula \eqref{eqn:ItoP} is from $1$ to $N_f(k)$. 
Then by similar combinatorics, we can get a similar result with \eqref{eqn:Is1}\eqref{eqn:Is2}\eqref{eqn:Is3}. 
Similarly, we can deal with the $I$-function of $\tilde{\mc Z}_g$ as what we have done to $\tilde{\mc Z}_s$. Hence, the $I$-functions of $\mc X_g$ and $\tilde{\mc Z}_g$ satisfy the same transformation law. 
\end{proof}

The $D_3$-type quiver in Figure \ref{fig:star1} can be viewed as a special star-shaped quiver with only one outgoing arrow and 2 incoming arrows. 
\begin{cor}\label{cor:D3}
    \begin{equation}
        I^{\mc X_0,R}(\vec q)=(1+(-1)^{N_3'}q_3)^{\sum_{I=1}^{N_1}x^1_I+\sum_{I=1}^{N_2}x^2_I-\sum_{F}^{N_3}\lambda_F+N_3'}I^{\mc Z_1,R}(\vec q')\,,
    \end{equation}
under the transformation of K\"ahler variables
 \begin{equation}\label{eqn:varI0toI1}
    q_1'=(1+(-1)^{N_3'}q_3)q_1,\,q_2'=(1+(-1)^{N_3'}q_3)q_2,\,q_3'=q_3^{-1}\,.
 \end{equation}
\end{cor}   

\subsection{Proof for Theorem \ref{thm:2nd}}\label{sec:prooftheorem2}
\iffalse
The relation of $I$-functions $I^{\mc X_0,R}$ and $I^{\mc Z_1,R}$ has been worked out in Corollary \ref{cor:D3}. 
\begin{enumerate}[(1)] 
\item We need to prove the equivalence $I^{\mc Z_1,R}\leftrightarrow I^{\mc Z_2,R}$ in detail, using localization theorem.
  \item The proof of the relation between $I^{\mc Z_2,R}\leftrightarrow I^{\mc Z_3^+,R}$ is easy, since $\mc Z_3^+$ is isomorphic to $\mc Z_1$.  
  \item The proof for the equivalence between $I^{\mc Z_3^+,R}$ and $I^{\mc X_4, R}$ is the same with that of $I^{\mc Z_1,R}$ and $I^{\mc X_0,R}$, since the varieties $\mc X_0$ and $\mc X_4$ are isomorphic and $\mc Z_1$ and $\mc Z_3^+$ are isomorphic.   
\item The relations among $I^{\mc X_4, R}\leftrightarrow I^{\mc X_5,R}\leftrightarrow I^{\mc X_6,R}$  and the relations among $I^{\mc X_7, R}\leftrightarrow I^{\mc X_8, R}\leftrightarrow I^{\mc X_9,R}$ are the same type: performing quiver mutations at the node 1 or 2 with the remaining nodes unchanged. 
 We may view those locally as a mutation to an $A$-type quiver, which was discussed in our previous work \cite{zhang2021gromov}.
\item The equivalence between $I^{\mc X_6,R}\leftrightarrow I^{\mc X_7, R}$ is a little subtle, we will prove it in detail. 
\end{enumerate}
\fi
\subsubsection{Proofs for the equivalence among ${I^{\mc Z_1,R}}$ ${I^{\mc Z_2,R}}$ and $I^{\mc Z_3,R}$}
The Theorem \ref{thm:2nd} item (2) can be concluded by the following Proposition. 
\begin{prop}\label{prop 5.9}
Let $P_1\in \mathfrak F_1$ and  $P_2=\iota_1(P_1)\in \mathfrak F_2$ be an arbitrary pair of torus fixed points. Then
\begin{equation}
    p^*I^{\mc Z_1,R}(\vec q)\rvert_{P_1}=p^*I^{\mc Z_2,R}(\vec q')\rvert_{P_2}\,
\end{equation}
under the variable change
\begin{equation}
    q_1'=q_1^{-1}, \,q_2'=q_2,\,q_3'=q_3.
\end{equation}
\end{prop}
\begin{proof} 
Without loss of generality, we consider 
\begin{equation}\label{eqn:P1proof}
    P_1=( [N_1],{[N_2]},\vec C_{[N_3']})\in\mathfrak F_1  ,\,\, \vec C_{[N_3']}=[N_4]\backslash [N_3]\,.
\end{equation}
Its image $\iota_1(P_1)\in \mathfrak F_2$ can be represented by 
\begin{equation}
    P_2:=\iota_1(P_1)=([N_4]\backslash [N_1],[N_2], [N_4]\backslash[N_3])\,.
\end{equation}
By the description of $I$-effective classes in \eqref{eqn:ruleofeffclass}, we have
\begin{equation*}
  \op{Eff}_T^1=\{(\vec n^1, \vec n^2, \vec n^3)\in \Z^{N_1}_{\geq 0}\times \Z^{N_2}_{\geq 0}\times \Z^{N_4-N_3}_{\leq0}\}.
\end{equation*}
Being restricted to $P_1$,  $x_I^i\rvert_{P_1}=\lambda_I$ for $i=1,2$, and $x^3_I\rvert_{P_1}=\lambda_{N_3+I}$.
The restriction of $I^{\mc Z_1,R}$ to $P_1$  can be transformed to the following formula by the similar strategy with that in the proof of Proposition \ref{prop:Istartransform}
\iffalse
\begin{align}
    p^*I^{\mc Z_1,R]}(\vec q)\rvert_{P_1}
    &=\sum_{(\vec n^i) \in \op{Eff}_T^1}
    \prod_{i=1,2}\prod_{I\neq J}^{N_i}
    \frac{\prod_{l\leq n^i_I-n^i_J}(\lambda_I-\lambda_J+l)}{\prod_{l\leq 0}(\lambda_I-\lambda_J+l)}
    \prod_{\substack{I, J=1\\ I\neq J}}^{N_3'}\frac{\prod_{l\leq n^3_I-n^3_J}(\lambda_{N_3+I}-\lambda_{N_3+J}+l)}{\prod_{l\leq 0}(\lambda_{N_3+I}-\lambda_{N_3+J}+l)}\nonumber\\   &\prod_{F=1}^{N_4}\prod_{i=1}^2\prod_{I=1}^{N_i}\frac{\prod_{l\leq 0}(\lambda_I-\lambda_F+l)}{\prod_{l\leq n^i_I}(\lambda_I-\lambda_F+l)}\prod_{I=1}^{N_3'}
    \frac{\prod_{l\leq 0}(-\lambda_{N_3'+I}+\lambda_F+l)}{\prod_{l\leq -n^3_I}(-\lambda_{N_3'+I}+\lambda_F+l)}\nonumber\\
 &\prod_{I=1}^{N_3'}\prod_{i=1}^{2}\prod_{J=1}^{N_i}
    \frac{\prod_{l\leq n^i_J-n^3_I}(\lambda_{J}-\lambda_{N_3+I}+l)}
    {\prod_{l\leq 0}(\lambda_{J}-\lambda_{N_3+I}+l)}\prod_{i=1}^3q_i^{\abs{\vec n^i}}\,.
\end{align}
  We do some combinatorics to the $I^{\mc Z_1,R}(\vec q)\rvert_{P_1}$ by freezing the nodes $3$ and $4$, then 
  \fi
\begin{subequations}
\begin{align}
    p^*I^{\mc Z_1,R}(\vec q)\rvert_{P_1}&=
    \sum_{(\vec n^i) \in \op{Eff}_T^1}
    \prod_{\substack{I, J=1\\ I\neq J}}^{N_2}\frac{\prod_{l\leq n^2_I-n^2_J}(\lambda_I-\lambda_J+l)}{\prod_{l\leq 0}(\lambda_I-\lambda_J+l)}
   \prod_{\substack{I, J=1\\ I\neq J}}^{N_3'}\frac{\prod_{l\leq n^3_I-n^3_J}(\lambda_{N_3+I}-\lambda_{N_3+J}+l)}{\prod_{l\leq 0}(\lambda_{N_3+I}-\lambda_{N_3+J}+l)}
   \label{I1:2a}
   \\
   &\prod_{F=1}^{N_4}
   \prod_{I=1}^{N_2}\frac{\prod_{l\leq 0}(\lambda_I-\lambda_F+l)}{\prod_{l\leq n^2_I}(\lambda_I-\lambda_F+l)}
   \prod_{J=1}^{N_3'}
   \prod_{I=1}^{N_1}\frac{\prod_{l\leq 0}(-\lambda_{N_3+J}+\lambda_{N_1+I}+l)}{\prod_{l\leq -n^3_J}(-\lambda_{N_3+J}+\lambda_{N_1+I}+l)}\label{I1:2b}
   \\
   &
     \prod_{J=1}^{N_3'} \prod_{I=1}^{N_2}\frac{\prod_{l\leq n^2_I-n^3_J}(\lambda_I-\lambda_{N_3+J}+l)}{\prod_{l\leq 0}(\lambda_I-\lambda_{N_3+J}+l)}\prod_{i=1}^3 q_i^{\abs{\vec n^i}}
     \label{I1:2c}
     \\
     &
    \prod_{\substack{I, J=1\\ I\neq J}}^{N_1}\frac{\prod_{l\leq n^1_I-n^1_J}(\lambda_I-\lambda_J+l)}{\prod_{l\leq 0}(\lambda_I-\lambda_J+l)}
   \prod_{I=1}^{N_1}
   \frac{\prod_{J=1}^{N_3'}\prod_{l=1}^{ n^1_I}(\lambda_I-\lambda_{N_3+J}-n^3_J+l)}{\prod_{F=1}^{N_4}\prod_{l=1}^{n^1_I}(\lambda_I-\lambda_F+l)} 
.\label{I1:2d}
\end{align}
\end{subequations}
In the above formula, for fixed $\vec n^2,\,\vec n^3$, formulas \eqref{I1:2a} \eqref{I1:2b} and \eqref{I1:2c} are fixed, 
and the formula \eqref{I1:2d} is the restriction of quasimap small $I$-function of $Gr^\vee$ in \eqref{eqn:Ifungramres} to the torus fixed point $([N_1]\subset [N_4])$ if we let $-\lambda_{N_3+J}-n^3_J$, $J=1,\ldots,N_3'$ and $-\lambda_F$, $F=1,\ldots,N_4$ be the equivariant parameters of torus $(\C^*)^{N_3'}\times (\C^*)^{N_4}$ action on $Gr^\vee$.

Now we consider $\mc Z_2$. 
The set of $I$-effective classes is 
\begin{align}\label{eqn:Effective2}
  \op{Eff}_T^2=&\{(\vec n^1, \vec n^2, \vec n^3)\in \Z^{N_2}_{\leq 0}\times \Z^{N_2}_{\geq 0}\times \Z^{N_4-N_3}_{\leq 0}|\exists \text{ distinct integers } l_1, l_2, \ldots, l_{N_4-N_3}, \nonumber\\
&\text{such that } n^1_{l_I}-n^3_I\geq 0\}
\end{align}
Consider the restriction of $I^{\mc Z_2,R}(\vec q)$ to $P_2$, $x_I^1\rvert_{P_2}=\lambda_{N_1+I}$, $x^2_I\rvert_{P_2}=\lambda_I$, $x^3_{I}\rvert_{P_2}=\lambda_{N_3+I}$. 
Then by the similar combinatorics,
$ p^*I^{\mc Z_2,R}(\vec q')\rvert_{P_2}$ can be transformed to 
\iffalse
\begin{align}
    p^*I^{\mc Z_2,R}(\vec q')\rvert_{P_2}&=\sum_{(\vec n^i)
    \in \op{Eff}_T^2}
    \prod_{\substack{I, J=1\\ I\neq J}}^{N_2}\frac{\prod_{l\leq n^1_I-n^1_J}(\lambda_I-\lambda_J+l)}{\prod_{l\leq 0}(\lambda_I-\lambda_J+l)}
   \prod_{\substack{I, J=1\\ I\neq J}}^{N_3'}\frac{\prod_{l\leq n^3_I-n^3_J}(\lambda_{N_3+I}-\lambda_{N_3+J}+l)}{\prod_{l\leq 0}(\lambda_{N_3+I}-\lambda_{N_3+J}+l)}\nonumber
   \\
   &
    \prod_{\substack{I, J=1\\ I\neq J}}^{N_2}\frac{\prod_{l\leq n^1_I-n^1_J}(\lambda_{N_1+I}-\lambda_{N_1+J}+l)}{\prod_{l\leq 0}(\lambda_{N_1+I}-\lambda_{N_1+J}+l)}\prod_{I=1}^{N_2}\prod_{J=1}^{N_3'}\frac{\prod_{l\leq 0}(\lambda_{N_1+I}-\lambda_{N_3+J}+l)}{\prod_{l\leq n^1_I-n^3_J}(\lambda_{N_1+I}-\lambda_{N_3+J}+l)}\nonumber
    \\
    &
   \prod_{F=1}^{N_4}
   \left(\prod_{I=1}^{N_2}\frac{\prod_{l\leq 0}(-\lambda_{N_1+I}+\lambda_F+l)}{\prod_{l\leq -n^1_I}(-\lambda_{N_1+I}+\lambda_F+l)}
   \prod_{I=1}^{N_2}\frac{\prod_{l\leq 0}(\lambda_I-\lambda_F+l)}{\prod_{l\leq n^2_I}(\lambda_J-\lambda_F+l)}\right) 
   \nonumber\\
   & \prod_{I=1}^{N_2}\prod_{J=1}^{N_3'}\frac{\prod_{l\leq n^2_I-n^3_J}(\lambda_I-\lambda_{N_3+J}+l)}{\prod_{l\leq 0}(\lambda_I-\lambda_{N_3+J}+l)}
   \prod_{i=1}^3(q_i')^{\abs{\vec n^i}}.
\end{align}
Freezing the nodes $3$ and $4$, we can transform $p^*I^{\mc Z_2,R}(\vec q')\rvert_{P_2}$ to the following formula. 
\fi
\begin{subequations}
\begin{align}
    p^*I^{\mc Z_2,R}(\vec q')\rvert_{P_2}&=\sum_{(\vec n^i)
    \in \op{Eff}_T^2}
    \prod_{\substack{I, J=1\\ I\neq J}}^{N_2}\frac{\prod_{l\leq n^2_I-n^2_J}(\lambda_I-\lambda_J+l)}{\prod_{l\leq 0}(\lambda_I-\lambda_J+l)}
   \prod_{\substack{I, J=1\\ I\neq J}}^{N_3'}\frac{\prod_{l\leq n^3_I-n^3_J}(\lambda_{N_3+I}-\lambda_{N_3+J}+l)}{\prod_{l\leq 0}(\lambda_{N_3+I}-\lambda_{N_3+J}+l)}\label{I2:1a}
    \\
    &
   \prod_{F=1}^{N_4}
   \prod_{I=1}^{N_2}\frac{\prod_{l\leq 0}(\lambda_I-\lambda_F+l)}{\prod_{l\leq n^2_I}(\lambda_J-\lambda_F+l)}
   \prod_{I=1}^{N_2}\prod_{J=1}^{N_3'}\frac{\prod_{l\leq n^2_I-n^3_J}(\lambda_I-\lambda_{N_3+J}+l)}{\prod_{l\leq 0}(\lambda_I-\lambda_{N_3+J}+l)}
   \label{I2:1b}
   \\
  &
   \prod_{I=1}^{N_2}\prod_{J=1}^{N_3'}\frac{\prod_{l\leq 0}(\lambda_{N_1+I}-\lambda_{N_3+J}+l)}{\prod_{l\leq -n^3_J}(\lambda_{N_1+I}-\lambda_{N_3+J}+l)}
   \prod_{i=1}^3(q_i')^{\abs{\vec n^i}}\label{I2:1c}
   \\
   &
   \prod_{\substack{I, J=1\\ I\neq J}}^{N_2}\frac{\prod_{l\leq n^1_I-n^1_J}(\lambda_{N_1+I}-\lambda_{N_1+J}+l)}{\prod_{l\leq 0}(\lambda_{N_1+I}-\lambda_{N_1+J}+l)}\prod_{I=1}^{N_2}\frac{\prod_{J=1}^{N_3'}\prod_{l=0}^{-n^1_I-1}(\lambda_{N_1+I}-\lambda_{N_3+J}-n^3_J-l)}
    {\prod_{F=1}^{N_4}\prod_{l=1}^{-n^1_I}(-\lambda_{N_1+I}+\lambda_F+l)}\label{I2:1d}\,.
\end{align}
\end{subequations}
For fixed $\vec n^2,\vec n^3$, the formulas \eqref{I2:1a}\eqref{I2:1b}\eqref{I2:1c} are fixed. 
The formula \eqref{I2:1d} is the degree $\vec n^1$ term of the equivariant quasimap small $I$-function of $Gr$ in \eqref{eqn:Ifungrbmres} being restricted to the $(\C^*)^{N_3'}\times (\C^*)^{N_4}$-fixed point $([N_4]\backslash[N_1]\subseteq [N_4])$, if we let $-\lambda_{N_1+J}-n^3_J$ $J=1,\ldots,N_3'$ and $-\lambda_F$ $F=1,\ldots,N_4$ denote equivariant parameters for $(\C^*)^{N_3'}\times (\C^*)^{N_4}$-action. 

Compare $p^*I^{\mc Z_1,R}(\vec q)\rvert_{P_1}$ with $p^*I^{\mc Z_2,R}(\vec q')\rvert_{P_2}$, and we find that for fixed $\vec n^2, \vec n^3$, formulas \eqref{I2:1a}\eqref{I2:1b}\eqref{I2:1c} are exactly equal to formulas \eqref{I1:2a}\eqref{I1:2b}\eqref{I1:2c}. Formulas \eqref{I1:2d} is equal to \eqref{I2:1d} for a degree $\abs{\vec n^1}$ by Theorem \ref{thm:Haidong} item 1. 

Another issue is about the difference between $\op{Eff}_T^1$ and $\op{Eff}_T^2$. 
Observe that there is a factor $\prod_{I=1}^{N_2}\prod_{J=1}^{N_3'}\prod_{l=0}^{-n_I^1-1}(\lambda_{N_1+I}-\lambda_{N_3+J}-n^3_J-l)$ in $p^*I^{\mc Z_2,R}$ which will vanish if $n^1_{N_3-N_1+J}-n^3_J+1\leq 0$.
Hence, we can enlarge the $\op{Eff}_T^2$ to $\widetilde{\op{Eff}}^2_T=\{(\vec n^1,\vec n^2,\vec n^3)\in \mathbb Z^{N_2}_{\leq 0}\times \mathbb Z^{N_2}_{\geq 0}\times\mathbb Z^{N_3'}_{\leq 0}\}$ without changing $p^*I^{\mc Z_2,R}$ because those terms that are not in $\op{Eff}^2_T$ vanish. 
Then we match $\op{Eff}^1_T$ and $\widetilde{\op{Eff}}^2_T$ by sending $(\vec n^1,\vec n^2,\vec n^3)$ to $(-\vec n^1,\vec n^2,\vec n^3)$.

Hence, we have proved the Proposition.
By generalizing the above procedure to any pair of torus fixed points $(P,\iota_1(P))\in 
\mathfrak{F}_1\times\mathfrak{F}_2$, we can 
conclude the quiver mutation $\mu_1$ preserves the equivariant quasimap small $I$-functions for $\mc Z_1$ and $\mc Z_2$.
\end{proof}
Similar combinatorics can be applied to the proof of the equivalence of equivariant quasimap small $I$-functions of $\mc Z_2,\mc Z_3$.
\begin{prop}\label{prop:I2I3}
    The quasimap small $I$-functions of $\mc Z_2$ and $\mc Z_3$ satisfy the following relation 
    \begin{align}
        p^*I^{\mc Z_2,R}(q_1,q_2,q_3)=p^*I^{\mc Z_3,R}(q_1,q_2^{-1},q_3).
    \end{align}
\end{prop}

\subsubsection{Proof for the equivalence between $I^{\mc Z_3}$ and $I^{\mc X_4}$}
We consider the equivariant quasimap small $I$-functions of $\mc Z_3$ and $\mc X_4$. 
Let $P_3\in \mathfrak F_3$ and $P_4=\iota_3(P_3)\in \mathfrak F_4$, see Section \ref{sec:appQ4} for the description $\mathfrak F_4$ and $\iota_3$. 
\begin{prop}\label{prop:I3I4}
  \begin{equation}
    I^{\mc X_4,R}(q_1,q_2,q_3)\big|_{P_4}=(1+(-1)^{N_3^{\prime}}q_3)^{\sum_{F=1}^{N_4}\lambda_F-\sum_{I=1}^{N_2}x^1_I-\sum_{I=1}^{N_1}x^2_I+N_3^{\prime}} I^{\mc Z_3,R}(q_1',q_2',q_3')\big|_{P_3}\,,
\end{equation}
under change of K\"ahler variables
\begin{equation}
   q_1'=\frac{q_3 q_1}{1+(-1)^{N_3^{\prime}}q_3}, \,
   q_2'=\frac{q_3 q_2}{1+(-1)^{N_3^{\prime}}q_3},\,
   q_3'=q_3^{-1}\,.
 \end{equation}
\end{prop}
\begin{proof}
The proof is similar with the previous situation, but this example is a little complicated. 
The $I$-effective classes for $\mc Z_3$ are 
\begin{align}
    \mathrm{Eff}_T^3=\{(\vec n^1,\vec n^2,\vec n^3)\in& \mathbb Z^{N_2}_{\leq 0}\times \mathbb Z^{N_1}_{\leq 0}\times \mathbb Z^{N_3'}\big | 
    \forall \,I\in  [N_3'], \,\exists \text{ distinct integers }\{k_I\}_{I=1}^{N_3'}\subset [N_2],\, \nonumber\\
    &\text{distinct integers }\{j_I\}_{I=1}^{N_3'}\subset [N_1]\,\mathrm{s.t.} \,
    n^1_{k_I}-n^3_{I}\geq 0, \, 
    n^2_{j_I}-n^3_I\geq 0\,
    \}.
\end{align}
Without loss of generality, we choose a torus fixed point 
$P_3=(\vec A_{[N_2]},\vec B_{[N_1]},\vec C_{[N_4-N_3]})$ such that $\vec C_{[N_4-N_3]}=\{2N_3-N_4+1,\ldots,N_3\}$, 
$\vec A_{[N_2]}=\{1,\ldots,N_3-N_1\}\cup \vec C_{[N_4-N_3]}$, 
and $\vec B_{[N_1]}=\{N_3-N_1+1,\ldots,2N_3-N_4\}\cup \vec C_{[N_4-N_3]}$.
Then define  $\zeta^1_A:=x^1_A\big|_{P_3} =\lambda_A$ for $A=1,\ldots,N_3-N_1$,  $\zeta^1_A:=x^1_A\big|_{P_3} =\lambda_{A+N_3-N_2}$ for $A=N_3-N_1+1,\ldots,N_2$, $\zeta_B^2:=x^2_B|_{P_3}=\lambda_{B+N_3-N_1}$ for all $B\in [N_1]$, and $x^3_I|_{P_3}=\lambda_{2N_3-N_4+I}$ for $I\in[N_3']$.
Then the restriction of the equivariant quasimap small $I$-function of $\mc Z_3$ to $P_3$ is 
\begin{align}
    I^{\mc Z_3,R}\big|_{P_3}&=
    \sum_{(\vec n^i)\in \mathrm{Eff}_T^3}(\mathrm{Irrel})\cdot
    \prod_{I\neq J=1}^{N_3'}
    \frac{\prod_{l\leq n^3_I-n^3_J}(\lambda_{2N_3-N_4+I}-\lambda_{2N_3-N_4+J}+l)}
    {\prod_{l\leq 0}(\lambda_{2N_3-N_4+I}-\lambda_{2N_3-N_4+J}+l)}\nonumber\\
    &\prod_{I=1}^{N_3'}
    \prod_{A=1}^{N_2}
    \frac{\prod_{l\leq 0}(\zeta_A^1-\lambda_{2N_3-N_4+I}+l)}
    {\prod_{l\leq n^1_A-n^3_I}(\zeta_A^1-\lambda_{2N_3-N_4+I}+l)}
    \prod_{B=1}^{N_1}\prod_{I=1}^{N_3'}
    \frac{\prod_{l\leq 0}(\zeta_B^2-\lambda_{2N_3-N_4+I}+l)}
    {\prod_{l\leq n^2_B-n^3_I}(\zeta_B^2-\lambda_{2N_3-N_4+I}+l)}
    \nonumber\\
    &
    \prod_{F=1}^{N_4}\prod_{I=1}^{N_3'}
    \frac{\prod_{l\leq -n^3_I}(\lambda_{F}-\lambda_{2N_3-N_4+I}+l)}
    {\prod_{l\leq 0}(\lambda_{F}-\lambda_{2N_3-N_4+I}+l)}\prod_{F=1}^{N_4}
    \prod_{A=1}^{N_2}
    \frac{\prod_{l\leq 0}(-\zeta_A^1+\lambda_F+l)}
    {\prod_{l\leq -n^1_A}(-\zeta_A^1+\lambda_F+l)}
    \nonumber\\
    &
    \prod_{F=1}^{N_4}\prod_{B=1}^{N_1}
    \frac{\prod_{l\leq 0}(\lambda_F-\zeta_B^2+l)}
    {\prod_{l\leq -n^2_B}(\lambda_F-\zeta_B^2+l)}
    \prod_{i=1}^{3}(q_i^{\prime})^{\abs{\vec n^i}}.
\end{align}
Notice that for each $I\in [N_3']$, both $n^1_{N_3-N_1+I}-n^3_I\geq 0$ and $n^2_{N_3-N_2+I}-n^3_I\geq 0$ hold. 
Then if $n^1_{N_3-N_1+I}>n^2_{N_3-N_2+I}$, we substitute $n^3_I=n^2_{N_3-N_2+I}-d_I$ and if $n^1_{N_3-N_1+I}\leq n^2_{N_3-N_2+I}$, we let $n^3_I=n^1_{N_3-N_1+I}-d_I$. This forces us to split the set $[N_3']$ into two parts depending on the relation between $n^1_{N_3-N_1+I}, n^2_{N_3-N_2+I}$. This is to avoid poles in our expressions. 
In order to simplify the writing, we assume that for each $I\in [N_3']$, we have  $n^1_{N_3-N_1+I}\leq n^2_{N_3-N_2+I}$.
Let 
\begin{equation}\label{eqn:didefinitionz3}
    d_I=n^1_{N_3-N_1+I}-n^3_I\,,
\end{equation}
and then $d_I\geq 0$\,.
We substitute $n^3_I=n^1_{N_3-N_1+I}-d_I$ with $d_I\geq 0$ (Actually, without the assumption $n^1_{N_3-N_1+I}\leq n^2_{N_3-N_2+I}$, the following procedure is similar). 

Define 
\begin{equation}
\zeta_F=
    \begin{cases}
        &\zeta_F^1+n^1_F,\,\text{ for }F=1,\ldots,N_2,\,\\
        &\zeta^2_{F-N_2}+n^2_{F-N_2},\,\text{ for }F=N_2+1,\ldots,N_4
    \end{cases}
\end{equation}
By the combinatorics we have used in previous sections, for fixed $\vec n^1$ and $\vec n^2$,  we transform the degree $(\vec n^i)$ terms of $I^{\mc Z_3,R}\big|_{P_3}$ to the following formula
    \begin{align}\label{eqn:Q3restr}
       &\sum_{\vec d\in \mathbb Z^{N_3'}_{\geq 0}}\prod_{I\neq J}
        \frac{\prod_{l\leq -d_I+d_J}(\zeta_{N_3-N_1+I}-\xi_{N_3-N_1+J}+l)}
        {\prod_{l\leq 0}(\zeta_{N_3-N_1+I}-\zeta_{N_3-N_1+J}+l)}
        \prod_{I=1}^{N_3'}
        \frac{\prod_{F=1}^{N_4}
        \prod_{l=1}^{d_I}(\zeta_F-\zeta_{N_3-N_1+I}+l)}
        {\prod_{l=1}^{d_I}
        \prod_{F=1}^{N_4}
        (\zeta_F-\zeta_{N_3-N_1+I}+l)}\nonumber\\
       & \cdot f\cdot \prod_{i=1}^{2}(q_i^{\prime})^{\abs{\vec n^i}}(q_3^{\prime})^{-\abs{\vec d}+\sum_{I=1}^{N_3'}n^1_{N_3-N_1+I}}
    \end{align}
where 
\begin{align}\label{eqn:fbetweenQ3Q4}
     f=&\prod_{I=1}^{N_3'}
        \left(\prod_{A=1}^{N_3-N_1}
        \frac{\prod_{l\leq 0}(\lambda_A-\lambda_{2N_2-N_4+I}+l)}{\prod_{l\leq n^1_A-n^1_{I+N_3-N_1}}(\lambda_A-\lambda_{2N_2-N_4+I}+l)}
        \prod_{B=1}^{N_1}
        \frac{\prod_{l\leq 0}(\zeta_B^2-\lambda_{2N_3-N_4+I}+l)}
        {\prod_{l\leq n^2_B-n^1_{N_3-N_1+I}}(\zeta_B^2-\lambda_{2N_3-N_4+I}+l)}\right)
        \nonumber\\
        &\prod_{F=1}^{N_4}
        \left(
        \prod_{B=1}^{N_1}
        \frac{\prod_{l\leq 0}(\lambda_F-\zeta^2_B+l)}{\prod_{l\leq -n^2_B}(\lambda_F-\zeta^2_B+l)}
        \prod_{A=1}^{N_3-N_1}
        \frac{\prod_{l\leq 0}(\lambda_F-\lambda_A+l)}
        {\prod_{l\leq -n^1_A}(\lambda_F-\lambda_A+l)}
        \right)
\end{align}
Notice that the first row in \eqref{eqn:Q3restr} is the quasimap small $I$-function of $Gr^\vee$ in \eqref{eqn:Ifungramres} restricted to torus fixed points $\{N_3-N_1+1,\ldots,N_2\}\subset\{1,\ldots,N_4\}$ and the equivariant parameters are $\{\zeta_F\}_{F=1}^{N_4}$ and $\{\lambda_A\}_{A=1}^{N_4}$.

On the other hand, let $P_4\in \mathfrak F_4$ be the image of $\iota_3(P_3)$.
Then restricted to $P_4$, ingredients are $x^1_A|_{P_4}=\zeta^1_A$ for $A=1,\cdots, N_2$, $x_B^2|_{P_4}=\zeta_B^2$ for $B=1,\cdots, N_1$ and $x^3_I|_{P_4}=\lambda_I$ for $I=1,\cdots, N_3$.
The $I$-effective classes for $\mc X_4$ are 
\begin{align}
    \mathrm{Eff}_T^4=\big\{&(\vec n^1,\vec n^2,\vec n^3)\in  \mathbb Z^{N_2}_{\leq 0}
    \times \mathbb Z^{N_1}_{\leq 0}
    \times \mathbb Z^{N_3}\big|\,  \exists \text{ distinct } \{k_I\}_{I\in [N_2]}\subset [N_3], \text{ s.t.} 
    n^3_{k_I}-n^1_I\geq 0\,,\nonumber\\
    &\exists \text{ distinct } \{l_J\}_{J\in [N_1]}\in [N_3], \text{ s.t.} n^3_{l_J}-n^2_J\geq 0\,, 
    \{k_I\}_{I=1}^{N_2}\cup \{l_J\}_{J=1}^{N_1}=[N_3]
    \big\}.
\end{align}
The equivariant quasimap small $I$-function $I^{\mc X_4,R}$ restricted to $P_4$ becomes
\begin{align}\label{eq5.52}
    &I^{\mc X_4,R}\big|_{P_4}=
    \sum_{(\vec n^i)\in\mathrm{Eff}_T^4}
    (\mathrm{Irrel})
    \prod_{I\neq J}
    \frac{\prod_{l\leq n^3_I-n^3_J}(\lambda_I-\lambda_J+l)}
    {\prod_{l\leq 0}(\lambda_I-\lambda_J+l)}
    \prod_{I=1}^{N_3}\prod_{A=1}^{N_2}
    \frac{\prod_{l\leq 0}(\lambda_I-\zeta^1_A+l)}{\prod_{l\leq n^3_I-n^1_A}(\lambda_I-\zeta^1_A+l)}\nonumber
    \\
    &\prod_{I=1}^{N_3}\prod_{B=1}^{N_1}
    \frac{\prod_{l\leq 0}(\lambda_I-\zeta_B^2+l)}{\prod_{l\leq n^3_I-n^2_B}(\lambda_I-\zeta_B^2+l)}
    \prod_{F=1}^{N_4}
    \frac{\prod_{l\leq 0}(\lambda_F-\lambda_I+l)}
    {\prod_{l\leq -n^3_I}(\lambda_F-\lambda_I+l)}\prod_{i=1}^3q_i^{\abs{\vec n^i}}.
\end{align}
Denote by $G:=\{f_1<f_2<\ldots<f_{N_3}\}:=[N_4]\backslash \{N_3-N_1+1,\ldots, N_2\}$.
Then $\{\zeta_F\}_{F=1}^{N_4}\backslash
\{\zeta_I\}_{I=N_3-N_1+1}^{N_2}$ can be denoted by $\{\zeta_{{f_I}}\}_{I=1}^{N_3}$.
Similar as what we have done in $I^{\mc Z_3,R}\big|_{P_3}$, we  assume that $n^2_{I-N_3+N_1}\geq n^1_{I-N_3+N_2}$ and set
\begin{equation}\label{eqn:didefinitionx4}
d_I=
    \begin{cases}
        &n^3_I-n^1_I, \text{ for }I=1,\ldots,N_3-N_1\\
        &n^3_I-n^2_{I-N_3+N_1}, \text{ for }I=N_3-N_1+1,\ldots,N_3.
    \end{cases}
\end{equation}
Then $d_I\geq 0$.
Substitute $n^3_I=d_I+n^1_I$ for $I=1,\ldots,N_3-N_1$, and $n^3_I=d_I+n^2_{I-N_3+N_1}$ for $I=N_3-N_1+1,\ldots,N_3$ in to \eqref{eq5.52}. After the usual combinatorics we have used repeatedly, for fixed $\vec n^1$ and $\vec n^2$, we transform the $I^{\mc X_4,R}\big|_{P_4}$ to the following form by disregarding the irrelevant part,
\begin{align}\label{eqn:IQ4rest}
    &\sum_{\vec d\in \mathbb Z^{N_3}_{\geq 0}}
    \prod_{I\neq J}
    \frac{\prod_{l\leq d_I-d_J}(\zeta_{f_I}-\zeta_{f_J}+l)}
    {\prod_{l\leq 0}(\zeta_{f_I}-\zeta_{f_J}+l)}
    \prod_{I=1}^{N_3}
    \frac{\prod_{l=-d_I+1}^0\prod_{A=1}^{N_4}(\lambda_A-\zeta_{f_I}+l)}{\prod_{l=1}^{d_I}\prod_{F=1}^{N_4}(\zeta_{f_I}-\zeta_F+l)}\nonumber\\
    &\cdot f\cdot\prod_{i=1}^2q_i^{\abs{\vec n^i}}
    q_3^{\abs{\vec d}+\sum_{I=1}^{N_3-N_1}n^1_I+\sum_{I=1}^{N_1}n^2_{I}},
\end{align}
where $f$ is exactly equal to \eqref{eqn:fbetweenQ3Q4}. 
The first row of \eqref{eqn:IQ4rest} is the quasimap small $I$-function of $Gr$ restricted to torus fixed point $\{N_4\}\backslash \{N_3-N_1+1,\ldots,N_2\}\subset [N_4]$ in \eqref{eqn:Ifungramres}, and the corresponding equivariant parameters are $\{\zeta_F\}_{F=1}^{N_4}$ and $\{\lambda_A\}_{A=1}^{N_4}$. 
Therefore, for any $n_1\in \Z$, $n_2\in \Z$, the formulas \eqref{eqn:IQ4rest} and \eqref{eqn:Q3restr} match by Theorem \ref{thm:Haidong}.

Comparing $\op{Eff}_T^3$ and $\op{Eff}_T^4$, one can find that $\vec n^1$ and $\vec n^2$ range in $\mathbb Z^{N_2}_{\leq 0}$ and $\mathbb Z^{N_1}_{\leq 0}$ for both $\op{Eff}_T^3$ and $\op{Eff}_T^4$. For the $\vec n^3$, 
they are related via the defined the $\vec d\in \mathbb Z^{N_3'}_{\geq 0}$ in \eqref{eqn:didefinitionz3} and the $\vec d\in \mathbb Z^{N_3}_{\geq 0}$ in \eqref{eqn:didefinitionx4} and the transformation of the fundamental building block.
Hence via the fundamental building block, we can get the relation between the restricted $I$-functions $I^{\mc Z_3,R}(q_1',q_2',q_3')\big|_{P_3}$ and $I^{\mc X_4,R}(q_1,q_2,q_3)\big|_{P_4}$.
 One can check that for any pair $(P,\iota(P))\in \mathfrak F_3\times\mathfrak F_4$ of torus fixed points, the similar relation between $I^{\mc Z_3,R}(q_1',q_2',q_3')\big|_{\iota(P)}$ and $I^{\mc X_4,R}(q_1,q_2,q_3)\big|_{P}$ holds.

\end{proof}
By localization, we can prove the Theorem \ref{thm:2nd} item (3).

\subsubsection{The equivalences among $I^{\mc X_4}$ $I^{\mc X_5}$ and $I^{\mc X_6}$, and among $I^{\mc X_7}$ $I^{\mc X_8}$ and $I^{\mc X_9}$}\label{5.4.3}
The equivalence between ${I^{\mc X_4,R}(q_1,q_2,q_3)}$ and ${{I^{\mc X_5,R}(q_1,q_2,q_3)}}$ is the same type with the equivalence between $I^{\mc X_7}$ and $I^{\mc X_8}$, where we are performing a quiver mutation to the node 1 who is only connected to the gauge node $3$. Locally, the quiver mutation is reduced to the mutation between $Gr(N_2,N_3)$ and $Gr(N_3-N_2,N_3)$ by using the combinatorics we have used for the $A_n$ \cite{zhang2021gromov}. Hence, we have
\begin{equation}
    I^{\mc X_4,R}(q_1,q_2,q_3)=I^{\mc X_5,R}(q_1^{-1},q_2,q_3q_1)\,,
\end{equation}
and 
\begin{equation}
    I^{\mc X_7,R}(q_1,q_2,q_3)=I^{\mc X_8,R}(q_1^{-1},q_2,q_3q_1)\,.
\end{equation}

The equivalence between ${I^{\mc X_5,R}(q_1,q_2,q_3)}$ and ${I^{\mc X_6,R}(q_1,q_2,q_3)}$ is the same type with the equivalence between ${I^{\mc X_8,R}(q_1,q_2,q_3)}$ and ${I^{\mc X_9,R}(q_1,q_2,q_3)}$. They can be reduced to the relation of $I$-functions between $Gr(N_1,N_3)$ and $Gr(N_3-N_1, N_3)$. Hence, we have
\begin{equation}
    I^{\mc X_6,R}(q_1,q_2,q_3)=I^{\mc X_5,R}(q_1,q_2^{-1},q_3q_2)\,.
\end{equation}
and 
\begin{equation}
    I^{\mc X_8,R}(q_1,q_2,q_3)=I^{\mc X_9,R}(q_1,q_2^{-1},q_3q_2)\,.
\end{equation}

\subsubsection{Proof for the equivalence between $I^{\mc X_6,R}$ and $I^{\mc X_7,R}$}
In this section, we carefully prove the equivalence between $I^{\mc X_6,R}$ and $I^{\mc X_7,R}$. 
Actually, this proof is similar with that between $I^{\mc Z_3,R}$ and $I^{\mc X_4,R}$. 

Let $\iota_6:\mathfrak F_{6}\rightarrow \mathfrak F_7$ be the bijection which is described in Corollary \ref{cor:appx6x7torusfixedpoints} in the Appendix. 

In this section we set $M_1=N_3-N_2, M_2=N_3-N_1, M_3=N_3, M_4=N_4$ for simplification. 

The effective classes can be written as follows via the rule in Section \ref{quasimap}.
\begin{align}
     \mathrm{Eff}^6_{T}=&
     \{(\vec n^1, \vec n^2, \vec n^3)\in \Z^{M_1}\times \Z^{M_2}\times \Z^{M_3}|\text{for }i=1,2,\, \exists\, \text{distinct integers } k_1,\ldots,k_{M_i},\nonumber \\
    & s.t.\, -n^3_{k_j}+n^i_j\geq 0,\,
    \cap_{i=1}^2
    \{k_1,\ldots,k_{M_i}\}
    =\emptyset;\, 
    \text{ for } k\in [N_3]\backslash \cup_{i=1}^2\{k_1,\ldots,k_{M_i}\},\, n^3_k\geq 0 \}\nonumber\\
     \mathrm{Eff}_{T}^7= 
     &\{(\vec n^1, \vec n^2, \vec n^3)\in \Z^{M_1}\times \Z^{M_2}\times \Z_{\geq 0}^{M_3}|\text{for } i=1, 2, \exists \text{ distinct integers } k_1, \ldots, k_{M_i}\nonumber
     \\ &
     \text{such that } n^3_{k_j}-n^i_j\geq 0 \}.
\end{align}
Let $P_6\in \mathfrak F_6$ and $P_7:=\iota_6(P_6)\in \mathfrak F_7$ be a pair of torus fixed points. 
\begin{prop}\label{prop:I6I7}
 We have
  \begin{equation}
    I^{\mathcal{X}_7, R}(q_1,q_2,q_3)\big|_{P_7}=
    I^{\mathcal{X}_6, R}(q_1',q_2',q_3')\big|_{P_6},
  \end{equation}
  under the change of K\"ahler variables
\begin{equation}\label{Kahler change}
 q_1=q_1'q_3', q_2=q_2'q_3', q_3=(q_3')^{-1}\,.
\end{equation}
\end{prop}
\begin{proof}
Without loss of generality, we consider the torus fixed point $P_7\in \mathfrak F_7$ which is $P_7=([M_1],[M_2],[M_3])$. 
Then the restriction of Chern roots are $x^i_A\big|_{P_7}=\lambda_{A}, i=1,,2,3$.
Then the restriction of $I^{\mc X^7,R}$ to $P_7$ is
  \begin{align}\label{eqn:IQ7restrict}
    I^{\mathcal{X}_7, R}(\vec q)|_{P_7}=
    &\sum_{(\vec n^i)\in \mathrm{Eff}^7_T}(\mathrm{Irrel})
    \prod_{ I\neq J}
    \frac{\prod_{l\leq n_I^3-n_J^3}(\lambda_{I}-\lambda_{J}+l)}{\prod_{l\leq 0}(\lambda_{I}-\lambda_{J}+l)}
    \prod_{F=1}^{M_4}
    \prod_{I=1}^{M_3}
    \frac{\prod_{l\leq 0}(\lambda_I-\lambda_F+l)}
    {\prod_{l\leq n^3_I}(\lambda_I-\lambda_F+l)}\nonumber\\
    &
    \prod_{I=1}^{M_3}\prod_{i=1}^2 \prod_{A=1}^{M_i}\frac{\prod_{l\leq 0}(\lambda_I-\lambda_A+l)} {\prod_{l\leq n^3_I-n^i_A}(\lambda_I-\lambda_A+l)}
    \prod_{i=1}^3(q_i)^{|\vec n^i|}\,.
  \end{align}
Modify the formula by multiplying $\prod_{i=1}^2\prod_{A=1}^{M_i}\prod_{I=1}^{M_3}\frac{\prod_{l\leq -n^i_A}(\lambda_I-\lambda_A+l)}{\prod_{l\leq -n^i_A}(\lambda_I-\lambda_A+l)}$. Then,
 \begin{subequations}
    \begin{align}
    I^{\mathcal{X}_7, R}(\vec q)|_{P_7}=&\sum_{\substack{\vec n^i\in \mathbb Z^{M_i}\\ i=1,2}}(\mathrm{Irrel})
    \prod_{i=1}^2
    \prod_{A=1}^{M_i}
    \prod_{I=1}^{M_3}
    \frac{\prod_{l\leq 0}(\lambda_I-\lambda_A+l)} {\prod_{l\leq -n^i_A}(\lambda_I-\lambda_A+l)}(q_i)^{|\vec n^i|}
    \label{subeqn:IX61}\\
    &\sum_{\vec n^3\in \mathbb Z^{M_3}_{\geq 0}} q_3^{\abs{\vec n^3}}
    \prod_{ I\neq J}\frac{\prod_{l\leq n_I^3-n_J^3}(\lambda_{I}-\lambda_{J}+l)}{\prod_{l\leq 0}(\lambda_{I}-\lambda_{J}+l)}\prod_{F=1}^{M_4}\prod_{I=1}^{M_3}\frac{\prod_{l\leq 0}(\lambda_I-\lambda_F+l)}
    {\prod_{l\leq n^3_I}(\lambda_I-\lambda_F+l)}
    \label{subeqn:IX62}\\
    &\prod_{I=1}^{M_3}\prod_{i=1}^2 \prod_{A=1}^{M_i}
    \frac{\prod_{l\leq 0}(\lambda_I-\lambda_A-n^i_A+l)} {\prod_{l\leq n^3_I}(\lambda_I-\lambda_A-n^i_A+l)}
    \label{subeqn:IX63}\,.
  \end{align}
 \end{subequations}
 Notice that in \eqref{subeqn:IX61}
 it looks that the factor $\frac{\prod_{l\leq 0}(\lambda_I-\lambda_A+l)}{\prod_{l\leq -n^i_A}(\lambda_I-\lambda_A+l)}$ vanishes if $n^i_A>0$ for some $i$ and $A$. 
 However, there is a pole in factor $\frac{\prod_{l\leq 0}(\lambda_I-\lambda_A-n^i_A+l)}{\prod_{l\leq n^3_I}(\lambda_I-\lambda_A-n^i_A+l)}$ in  \eqref{subeqn:IX63}. Hence the whole formula is well defined. 
 Moreover, 
zfor fixed $\vec n^1,\vec n^2$ the index set of $\vec n^3$ can be enlarged to be $\mathbb Z^{M_3}_{\geq 0}$, since $I^{\mc X_7,R}\big|_{P_7}$ vanishes for $n^3_I<n^i_I$ because of the factor $\prod_{I=1}^{M_3}\prod_{A=1}^{M_i}\frac{\prod_{l\leq 0}(\lambda_I-\lambda_A+l)}{\prod_{l\leq n^3_I-n^i_A}(\lambda_I-\lambda_A+l)}$ in \eqref{eqn:IQ7restrict}.

Comparing  with \eqref{eqn:Ifungrbmres}, one can find that the formulas \eqref{subeqn:IX62} and \eqref{subeqn:IX63} together is the degree $\vec n^3$ term of the quasimap small $I$-function of the  Grassmannian $Gr(M_3,M_1+M_2+M_4)$ restricted to a torus fixed point parameterized by set $[M_3]\subset [M_1]\sqcup [M_2]\sqcup [M_4]$, 
and the equivariant parameters of the torus action of $(\mathbb C^*)^{M_1+M_2+M_4}$ on $Gr(M_3,M_1+M_2+M_4)$ are $n_A^i+\lambda_A$ for $i=1,2;A=1,\ldots,M_i$ and $\lambda_F$ for $F=1,\ldots,M_4$. 

On the other hand, we consider the restriction of $I^{\mc X_6,R}$ to $P_6\in \mathfrak F_6$.
Let $P_6:=\iota_6^{-1}(P_7)=([M_1],[M_2],[M_1]\sqcup [M_2]\sqcup ([M_4]\backslash [M_3]))$. 
The restrictions of Chern roots to $P_6$ are $x^i_A\big|_{P_7}=\lambda_A$, for $i=1,2$, $A=1,\ldots,M_i$. 
Define $\eta_I=x^3_I\big|_{P_6}$. 
Then
$\eta_I:=\lambda_I$ for $I=1,\ldots,M_1$, $\eta_I=\lambda_{I-M_1}$ for $I=M_1+1,\ldots,M_1+M_2$, 
and $\eta_I=\lambda_{I-(M_1+M_2)+M_3}$ for $I=M_1+M_2+1,\ldots,M_3$.
 The restriction of $I^{\mc X_6, R}$ to $P_6$ can be written as follows,
  \begin{align}\label{eqn:IQ6restriction1}
     I^{\mathcal{X}_6, R}(\vec q')|_{P_6}&
    =\sum_{(\vec m^i)\in \mathrm{Eff}_T^6}(\mathrm{Irrel})
    \prod_{I\neq J}\frac{\prod_{l\leq m^3_I-m^3_J}(\eta_{I}-\eta_{J}+l)}{\prod_{l\leq 0}(\eta_{I}-\eta_{J}+l)}
    \prod_{I=1}^{M_3}
    \prod_{F=1}^{M_4}
    \frac{\prod_{l\leq 0}(\lambda_F-\eta_I+l)}
    {\prod_{l\leq -m^3_I}(\lambda_F-\eta_I+l)}\nonumber
    \\
    &\prod_{I=1}^{M_3}
    \prod_{i=1}^2
    \prod_{A=1}^{M_i}
    \frac{\prod_{l\leq 0}(\lambda_A-\eta_I+l)}{\prod_{l\leq m^i_A-m^3_I}(\lambda_A-\eta_I+l)}\prod_{i=1}^3(q_i')^{\abs{\vec m^i}}.
  \end{align}
  Notice that the Irrelevant part in \eqref{eqn:IQ7restrict} and that in \eqref{eqn:IQ6restriction1} are exactly equal for fixed $\vec n^1=\vec m^1$ and $\vec n^2=\vec m^2$.
Let
\begin{align}\label{index change}
  d_I=
  \begin{cases}
      m^1_I-m^3_I, & 1\leq I\leq M_1\\
      m^2_{I-M_1}-m^3_{I}, & M_1+1\leq I\leq M_1+M_2 \\
       -m^3_{I-(M_1+M_2)+M_3}, & M_1+M_2+1\leq I\leq M_3
  \end{cases}
\end{align}
Then we must have $d_I\geq 0$. Otherwise the corresponding terms will vanish.
Substitute $m^{3}_I$ by $m^1_I-d_I$ for $I=1,\ldots, M_1$, $m^2_I-d_I$ for $I=M_1+1,\ldots,M_1+M_2$ and $-d_I$ for $I=M_1+M_2+1,\ldots,M_3$ by \eqref{index change}. 
Then we can rewrite $I^{\mathcal{X}_6, R}(\vec q')|_{P_6}$ as follows after some combinatorics,
\begin{subequations}
    \begin{align}
       I^{\mathcal{X}_6, R}(\vec q')|_{P_6} =&\sum_{\substack{\vec n^i\in \mathbb Z^{M_i}\\ i=1,2}} (\mathrm{Irrel}) \prod_{i=1}^2
    \prod_{A=1}^{M_i}
    \prod_{I=1}^{M_3}
    \frac{\prod_{l\leq 0}(\lambda_I-\lambda_A+l)}{\prod_{l\leq -m^i_A}(\lambda_I-\lambda_A+l)}
    \label{subeqn:IX71}\\
    &\cdot\sum_{\vec d\in \mathbb Z^{M_3}_{\geq 0}}
    \prod_{I\neq J}
    \frac{\prod_{l\leq d_J-d_I}(\xi_I-\xi_J+l)}{\prod_{l\leq 0}(\xi_I-\xi_J+l)}
    \prod_{I=1}^{M_3}
    \prod_{i=1}^2\prod_{A=1}^{M_i}
    \frac{\prod_{l\leq 0}(\lambda_A+m^i_A-\xi_I+l)}
    {\prod_{l\leq d_I}(\lambda_A+m^i_A-\xi_I+l)}\label{subeqn:IX72}\\    &\prod_{F=1}^{M_4}
    \frac{\prod_{l\leq 0}(\lambda_F-\xi_I+l)}{\prod_{l\leq d_I}(\lambda_F-\xi_I+l)}(q_3')^{-\abs{\vec d}}
    \prod_{i=1}^2(q_i'q_3')^{|\vec m^i|}\,.
    \label{subeqn:IX73}
    \end{align}
\end{subequations}
where 
\begin{align}\label{variable change}
  \xi_I=\left\{
     \begin{array}{ll}
       \lambda_I+m^1_I, & \hbox{$1\leq I\leq M_1$;}\\
      \lambda_{I-M_1}+m^2_{I-M_1}, & \hbox{$M_1+1\leq I\leq M_1+M_2$;} \\
       \lambda_{I-(M_1+M_2)+M_3}, & \hbox{$M_1+M_2+1\leq I\leq M_3$.}
     \end{array}
   \right.
\end{align}
The subequations \eqref{subeqn:IX72} and \eqref{subeqn:IX73} can be viewed as the restriction of the degree $\vec d$ term of equivariant quasimap small $I$-function \eqref{eqn:Ifungramres} of the dual of the Grassmannian $Gr(M_3,M_1+M_2+M_4)$ restricted to a torus fixed point $[M_1]\sqcup [M_2]\sqcup ([M_4]\backslash [M_3])$, and the equivariant parameters of torus $(\mathbb C^*)^{M_1+M_2+M_4}$-action on $Gr(M_3,M_1+M_2+M_4)$ are $\{\xi_I\}_{I\in [M_1]\sqcup M_2}\cup \{\lambda_F\}_{F\in [M_4]}$.

Compare $\op{Eff}_{T}^6$ and $\op{Eff}_T^7$, and one can find both $(\vec n^1,\vec n^2)$  and $(\vec m^1,\vec m^2)$ range in $\mathbb Z^{M_1}\times \mathbb Z^{M_2}$, 
and for any fixed $\vec n^1=\vec m^1, \vec n^2=\vec m^2$ the $\vec n^3\in \mathbb Z^{M_3}_{\geq 0}$ is related with $\vec m^3\in \mathbb Z^{M_3}$ by matching the $\abs{\vec n^3}$ and  $|\vec d|$.
Comparing $I^{\mc X_6,R}\big|_{P_6}$ and $I^{\mc X_7,R}\big|_{P_7}$, we find \eqref{subeqn:IX61} and \eqref{subeqn:IX71} are equal, and \eqref{subeqn:IX62} \eqref{subeqn:IX63} and \eqref{subeqn:IX72} \eqref{subeqn:IX73} are equal by the variable change $q_3=(q_3')^{-1}$ and $q_1=q_1'q_3',\,q_2=q_2'q_3'$ according to the fundamental building block in Theorem \ref{thm:Haidong}.
Similarly, we can prove the equation for any pair of torus fixed points $(P_6,\iota_6(P_6))\in \mathfrak F_6\times \mathfrak F_7$.
\end{proof}
Then the Theorem \ref{thm:2nd} item (5) is proved.
\subsection{Proof for equivalences among $I^{\mc Z_2,R},I^{\mc Z_{10},R},I^{\mc X_{8},R}$, and $I^{\mc X_{11},R}$}\label{sec:proofI10related}
Let $P_2\in \mathfrak F_2$ and $P_{10}:=\iota_{10}(P_2)\in \mathfrak F_{10}$ be a pair of torus fixed points. 
\begin{prop}\label{prop:I2I10}
    \begin{align}\label{eqn:I10I2}
        I^{\mc X_{10},R}(q_1,q_2,q_3)\big|_{P_{10}}=(1+(-1)^{N_3-N_1}q_3)^{\sum_{B=1}^{N_2}x^2_B-\sum_{A=1}^{N_2}x^1_A+N_3-N_1}I^{\mc Z_2,R}(q_1',q_2',q_3')\big|_{P_2}
    \end{align}
    under change of K\"ahler variables
    \begin{align}
        q_1'=\frac{q_1q_3}{1+(-1)^{N_3-N_1}q_3},q_2'=q_2(1+(-1)^{N_3-N_1}q_3),q_3'=q_3^{-1}.
    \end{align}
\end{prop}
\begin{proof}
We first proof the equation \eqref{eqn:I10I2}. The $I$-effective classes for $\mc Z_{10}$ are 
\begin{align}\label{eqn:effectiveclassesZ10}
    \mathrm{Eff}_T^{10}=&\{(\vec n^1,\vec n^2,\vec n^3)\in\mathbb Z^{N_2}_{\leq 0}\times \mathbb Z^{N_2}_{\geq 0}\times \mathbb Z^{N_3-N_1} \big| \nonumber\\
    &\forall I\in [N_3-N_1], \exists \text{ distinct integers } a_I\in [N_2],\,\mathrm{s.t.} n^3_I-n^1_{a_I}\geq 0\}\,,
\end{align} 
and those of $\mc Z_2$ are given in Equation \eqref{eqn:Effective2}.
Let $P_2\in \mathfrak F_2$ be a torus fixed point parameterized by $P_2=([N_2],\{N_1+1,\ldots,N_1+N_2\}, [N_3'])$.
Then the restriction of $I^{\mc Z_2,R}$ to $P_2$ can be written as 
\begin{align}
    I^{\mc Z_2,S}\big|_{P_2}=&\sum_{(\vec n^i)\in \mathrm{Eff}_{T}^2}(\mathrm{Irrel})\cdot
    \prod_{I\neq J}^{N_3'}
    \frac{\prod_{l\leq n^3_I-n^3_J}(\lambda_I-\lambda_J+l)}
    {\prod_{l\leq 0}(\lambda_I-\lambda_J+l)}
    \prod_{I=1}^{N_3'}\prod_{A=1}^{N_2}
    \frac{\prod_{l\leq 0}(\lambda_A-\lambda_I+l)}
    {\prod_{l\leq n^1_A-n^3_I}(\lambda_A-\lambda_I+l)}\nonumber
    \\
    &
    \prod_{I=1}^{N_3'}\prod_{B=1}^{N_2}
    \frac{\prod_{l\leq n^2_B-n^3_I}(\lambda_{N_1+A}-\lambda_I+l)}
    {\prod_{l\leq 0}(\lambda_{N_1+A}-\lambda_I+l)}(q_1^{\prime})^{\abs{\vec n^1}}(q_3^{\prime})^{\abs{\vec n^3}}\,.
\end{align}
Note that $d_I:=n^1_I-n^3_I\geq 0$ for each $I=1,\ldots, N_3'$. 
Replacing $n^3_I$ by $n^1_I-d_I$, we transform the $I^{\mc Z_2,S}\big|_{P_2}$ to the following formula by similar combinatorics with that in the proof of star-shaped quivers, 
\begin{subequations}
    \begin{align}
    &\sum_{\substack{\vec n^1\in \mathbb Z^{N_2}_{\leq 0}\\ \vec n^2\in \mathbb Z^{N_2}_{\geq 0}}}(\mathrm{Irrel})\cdot
    \prod_{I=1}^{N_3'}
    \left(
    \prod_{A=N_3'+1}^{N_2}
    \frac{\prod_{l\leq 0}(\lambda_A-\lambda_I+l)}
    {\prod_{l\leq n^1_A-n^1_I}(\lambda_A-\lambda_I+l)}
    \prod_{B=1}^{N_2}
    \frac{\prod_{l\leq n^2_A-n^1_I}(\lambda_{A+N_1}-\lambda_I+l)}{\prod_{l\leq 0}(\lambda_{A+N_1}-\lambda_I+l)}
    \right)\label{subeqn:I2I101}
    \\
    &
   \sum_{\vec d\in \mathbb Z^{N_3'}_{\geq 0}} \prod_{I\neq J}^{N_3'}
    \frac{\prod_{l\leq d_I-d_J}(\eta_I-\eta_J+l)}{\prod_{l\leq 0}(\eta_I-\eta_J+l)}
    \prod_{I=1}^{N_3'}
    \frac{\prod_{B=1}^{N_2}\prod_{l=1}^{d_I}(\lambda_{B+N_1}+n^2_B-\eta_I+l)}
    {\prod_{A=1}^{N_2}\prod_{l=1}^{d_I}(\eta_A-\eta_I+l)}(q_1^{\prime})^{\abs {\vec n^1}}(q_3^{\prime})^{-\abs{\vec d}+\sum_{I=1}^{N_3'}n^1_I}\label{subeqn:I2I102}
\end{align}
\end{subequations}
where we have used a new notation 
$$\eta_I=\lambda_I+n^1_I, \,I=1,\ldots,N_2.$$
Note that subequation \eqref{subeqn:I2I102} is the restriction of the equivariant quasimap small $I$-function of the dual side $Gr^\vee$ in the fundamental building block to the fixed point $([N_3']\subset[N_2])$ with the equivariant parameters of the torus action $(\mathbb C^*)^{N_2}\times (\mathbb C^*)^{N_2}$  being $\eta_I$ for $I=1,\ldots,N_2$ and $\lambda_{B+N_1}+n^2_B$ for $B=1,\ldots,N_2$.

On the other hand, according to Lemma \ref{fixed point for Q10}, let $P_{10}=\iota_{10}(P_2)$ be the torus fixed point in $\mathfrak F_{10}$ defined by 
\begin{equation}\label{eqn:P10}
    P_{10}=([N_2],\{N_1+B\}_{B=1}^{N_2},  
    \{N_3'+I\}_{I=1}^{N_3-N_1}).
\end{equation}
The restriction of $I^{\mc Z_{10},R}$ to $P_{10}$ is 
\begin{align}\label{eqn:I10restrictstop}
    I^{\mc Z_{10},R}\big|_{P_{10}}=&
    \sum_{\substack{\vec n^1\in \mathbb Z^{N_2}_{\leq 0}\\ \vec n^2\in \mathbb Z^{N_2}_{\geq 0} }}
    (\mathrm{Irrel})\sum_{\vec n^3\in \mathbb Z^{N_3}_{\leq 0}}
    \prod_{I\neq J}^{N_3-N_1}
    \frac{\prod_{l\leq n^3_I-n^3_J}(\lambda_{N_3'+I}-\lambda_{N_3'+J}+l)}{\prod_{l\leq 0}(\lambda_{N_3'+I}-\lambda_{N_3'+J}+l)}\nonumber
    \\
    &
    \prod_{I=1}^{N_3-N_1}
    \left(\prod_{A=1}^{N_2}
    \frac{\prod_{l\leq 0}(\lambda_{N_3'+I}-\lambda_A+l)}
    {\prod_{l\leq n^3_I-n^1_A}(\lambda_{N_3'+I}-\lambda_A+l)}
    \prod_{B=1}^{N_2}
    \frac{\prod_{l\leq 0}(\lambda_{B+N_1}-\lambda_{N_3'+I}+l)}
    {\prod_{l\leq n^2_B-n^3_I}(\lambda_{B+N_1}-\lambda_{N_3'+I}+l)}
    \right)\nonumber
    \\
    &
    \prod_{A=1}^{N_2}\prod_{B=1}^{N_2}
    \frac{\prod_{l\leq n^2_B-n^1_A}(\lambda_{B+N_1}-\lambda_A+l)}{\prod_{l\leq 0}(\lambda_{B+N_1}-\lambda_A+l)} q_1^{\abs{\vec n^1}}q_3^{\abs{\vec n^3}}.
\end{align}
 We use the similar strategy as what we have done to $I^{\mc Z_2,R}\big|_{P_2}$. 
 For each $I=1,\ldots,N_3-N_1$, we have $d_I:=n^3_I-n^1_{N_3'+I}\geq 0$. Replace $n^3_I$ by $d_I+n^1_{N_3'+I}$ everywhere and we get the following formula after some combinatorics
 \begin{subequations}
     \begin{align}
    & \sum_{\substack{\vec n^1\in \mathbb Z^{N_2}_{\leq 0}\\ \vec n^2\in \mathbb Z^{N_2}_{\geq 0} }}
    (\mathrm{Irrel})
    \prod_{I=1}^{N_3'}
    \left(
    \prod_{J=N_3'+1}^{N_2}
    \frac{\prod_{l\leq 0}(\lambda_{J}-\lambda_I+l)}
    {\prod_{l\leq n^1_{J}-n^1_I}(\lambda_{J}-\lambda_I+l)}
    \prod_{B=1}^{N_2}
    \frac{\prod_{l\leq n^2_B-n^1_I}(\lambda_{B+N_1}-\lambda_I+l)}
    {\prod_{l\leq 0}(\lambda_{B+N_1}-\lambda_I+l)}
    \right)\label{subeqn:I101}
    \\
    &
    \sum_{\vec d\in \mathbb Z^{N_3-N_1}_{\geq 0}}  
    \prod_{I=1}^{N_3-N_1}
    \frac{\prod_{B=1}^{N_2}\prod_{l=-d_I+1}^0(\lambda_{N_1+B}+n^2_B-\eta_{N_3'+I}+l)}
    {\prod_{A=1}^{N_2}\prod_{l=1}^{d_I}(\eta_{N_3'+I}-\eta_A+l)}\label{subeqn:I102}
    \\
    &\prod_{I\neq J}^{N_3-N_1}
    \frac{\prod_{l\leq d_I-d_J}(\eta_{N_3'+I}-\eta_{N_3'+J}+l)}
    {\prod_{l\leq 0}(\eta_{N_3'+I}-\eta_{N_3'+J}+l)} q_1^{\abs{\vec n^1}}q_3^{\abs{\vec d}+\sum_{I=1}^{N_3-N_1}n^1_{N_3'+I}}.\label{subeqn:I103}
     \end{align}
 \end{subequations}
 
Similar as what we have done in previous Propositions, we compare $\op{Eff}_T^2$ in \eqref{eqn:Effective2} and $\op{Eff}_T^{10}$ in \eqref{eqn:effectiveclassesZ10}, and find that $(\vec n^1, \vec n^2)$ for both sets range in the same $\mathbb Z^{N_2}_{\leq  0}\times \mathbb Z^{N_2}_{\geq 0}$ and $\vec n^3$ for both are related via the degree $|\vec d|$.

The sub-equation \eqref{subeqn:I102} is the degree $\abs{\vec d}$-term of the $I$-function of the total space of $S^{\oplus N_2}\rightarrow Gr(N_3-N_1,N_2)$ restricted to a torus fixed point $([N_2]\backslash [N_3']\subset [N_2])$ with equivariant parameters  
$\{\eta_I\}_{I=1}^{N_2}$ and  
$\{\lambda_{B+N_1}+n^2_B\}_{B=1}^{N_2}$. 

Compare $I^{\mc Z_2,R}\big|_{P_2}$ and $I^{\mc Z_{10},R}\big|_{\iota_{10}(P_2)}$. 
One can find subequations \eqref{subeqn:I2I101} and  \eqref{subeqn:I101} are exactly equal for the same $\vec n^{1}$ and $\vec n^2$. Subequations \eqref{subeqn:I2I102} and \eqref{subeqn:I102} are two sides of the fundamental building block in Theorem \ref{thm:Haidong} case 3. 
\end{proof}
We can conclude the Theorem \ref{thm:2nd} item (7) by this Proposition. \\

Next, we will discuss the relation of $I^{\mc Z_{10},R}$ and $I^{\mc X_8,R}$. Performing quiver mutation $\mu_1$ to $\mathbf Q_{10}$, we get $\mathbf Q_8$ by switching nodes $1$ and $2$, $2$ and $3$. Since we don't care the order of nodes, we will relabel nodes of $\mathbf Q_8$ by numerals $2,1,3$. In the following, when we talk about $\mathbf Q_8$, we mean this relabeled one.  
The torus fixed points of $\mc X^8$ can be described as follows. 
\begin{align}
    \mathfrak{F}_8=\{(\vec A_{[N_3]},\vec B_{[N_2]},\vec C_{[N_3-N_1]})\big|\, \vec B_{[N_2]}\subset \vec A_{[N_3]}, \vec C_{[N_3-N_1]}\subset \vec A_{[N_3]}\}
\end{align}
There is a bijection 
\begin{align}
    \iota_{10}': \mathfrak F_{10}\rightarrow \mathfrak F_8
\end{align}
defined by sending a point $(\vec A_{[N_2]}, \vec B_{[N_2]},\vec C_{[N_3-N_1]})$ to $(\vec C_{[N_3-N_1]}\sqcup ([N_4]\backslash \vec A_{[N_2]}), \vec B_{[N_2]},\vec C_{[N_3-N_1]})$.
Let $P_{10}\times \iota_{10}'(P_{10})\in \mathfrak F_{10}\times \mathfrak F_8$ be an arbitrary pair of torus fixed points.
 
 \begin{prop}
     We have
    \begin{align}
        I^{\mc X_8,R}(q_1,q_2,q_3)\big|_{P_{8}}=I^{\mc Z_{10},R}(q_1',q_2',q_3')\big|_{P_{10}},
    \end{align}
    with change of K\"ahler variables
    \begin{align}
        q_1'=q_1^{-1}, \,q_2'=q_2,\,q_3'=q_3q_1.\,
    \end{align}    
 \end{prop}
\begin{proof}

The effective classes of $\mc Z_{10}$ are given in
\eqref{eqn:effectiveclassesZ10}, and those of $\mc X_8$ are as follows
\begin{align}\label{eqn:effectivex8}
    \mathrm{Eff}_T^{8}=\{(\vec n^1,\vec n^2,\vec n^3)\in \mathbb Z^{N_3}_{\geq 0}\times\mathbb Z^{N_2}_{\geq 0}\times \mathbb Z^{N_3-N_1}   \big|\, 
    \forall I\in [N_3-N_1], \exists \text{ distinct }i_I, \mathrm{s.t.} n^1_{i_I}-n^3_I\geq 0.
    \}
\end{align}
We fix a $\vec n^3\in \mathbb Z^{N_3-N_1}$ with components negative or non-negative. 
Without loss of generality, we assume that for some $p$,
\begin{align}\label{eqn:n3}
    &n^3_i\geq 0, \text{ for }i=1,\ldots,p \nonumber\\
    &n^3_{j}< 0, \text{ for }j=p+1,\ldots,N_3-N_1. 
\end{align}

We choose $P_{10}$ as in \eqref{eqn:P10}. 
Then the image of the point $P_{10}$ in $\mathfrak F_8$ is 
\begin{equation}\label{eqn:torusfixedpointsP8p10}
    \iota_{10}'(P_{10})=(\{N_3'+1,\ldots,N_4\}, \{N_1+1,\ldots,N_4\}, \{N_3'+1,\ldots,N_2\}).
\end{equation}

We rewrite the restriction $I^{\mc Z_{10},R}\big|_{P_{10}}$ as follows, 
\begin{align}\label{eqn:I10restrP10}
    I^{\mc Z_{10},R}\big|_{P_{10}}(q')&=
    \sum_{(\vec n^i)\in \mathrm{Eff}_T^{10}}(\mathrm{Irrel})\cdot
     \prod_{I\neq J}^{N_2}
    \frac{\prod_{l\leq n^1_I-n^1_J}(\lambda_I-\lambda_J+l)}
    {\prod_{l\leq 0}(\lambda_I-\lambda_J+l)}
    \prod_{I=1}^{N_2}\prod_{B=1}^{N_2}
    \frac{\prod_{l\leq -n^1_I+n^2_B}(-\lambda_I+\lambda_{B+N_1}+l)}
    {\prod_{l\leq 0}(-\lambda_I+\lambda_{B+N_1}+l)}
    \nonumber\\
    &
    \prod_{I=1}^{N_2}\left(\prod_{F=1}^{N_4}
    \frac{\prod_{l\leq 0}(-\lambda_I+\lambda_F+l)}{\prod_{l\leq -n^1_I}(-\lambda_I+\lambda_F+l)}
    \prod_{A=1}^{N_3-N_1}
    \frac{\prod_{l\leq 0}(-\lambda_I+\lambda_{N_3'+A}+l)}
    {\prod_{l\leq -n^1_I+n^3_A}(-\lambda_I+\lambda_{N_3'+A}+l)}
    \right)(q_3')^{\abs{\vec n^3}}\nonumber\\
    &
    \prod_{B=1}^{N_2}\left(
    \prod_{F=1}^{N_4}
    \frac{\prod_{l\leq 0}(\lambda_{B+N_1}-\lambda_F+l)}
    {\prod_{l\leq n^2_B}(\lambda_{B+N_1}-\lambda_F+l)}
    \prod_{A=1}^{N_3-N_1}
    \frac{\prod_{l\leq 0}(\lambda_{B+N_1}-\lambda_{N_3'+A}+l)}
    {\prod_{l\leq n^2_B-n^3_A}(\lambda_{B+N_1}-\lambda_{N_3'+A}+l)}
    \right)(q_1')^{\abs{\vec n^1}}
\end{align}
Notice that the $\mathrm{Irrel}$ represents the remaining part in the restriction of $I$-function and it is different with that in \eqref{eqn:I10restrictstop}.
By observation, one can find that the second term in the second row makes $d_I:=-n^1_I+n^3_{I-N_3'}\geq 0$ for $N_3'+p+1\leq I\leq N_2$. 
Make the replacement 
\begin{equation}\label{eqn:dIforz10}
\begin{cases}
    n^1_I=-d_I+n^3_{I-N_3'} &\text{ for } N_3'+p+1\leq I\leq N_2 \\
     n^1_I=-d_I & \text{ for } I\leq N_3'+p 
\end{cases} \,, 
\end{equation}
 and we transform the summation over all $\vec n^1\in \mathbb Z^{N_2}_{\leq 0}$ of  $I^{\mc Z_{10},R}\big|_{P_{10}}(q')$ except for the \textit{Irrelevant} part to a beautiful formula by doing some combinatorics
\begin{align}\label{eqn:I10dualgrassmann}
    &\sum_{\vec d\in \mathbb Z^{N_2}_{\geq 0}}
    \prod_{I=1}^{N_2}
    \frac{\prod_{B=1}^{N_2}\prod_{l=1}^{d_I}(\lambda_{B+N_1}+n^2_B-\eta_I+l) (q_3')^{\abs{\vec n^3}}(q_1')^{-\abs{\vec d}+\sum_{A=p+1}^{N_3-N_1}n^3_A}
    }
    {\prod_{F=1}^{N_4}\prod_{l=1}^{d_I}(-\eta_I+\lambda_F+l)\prod_{A=1}^{N_3-N_1}\prod_{l=1}^{d_I}(-\eta_I+\lambda_{N_3'+A}+n^3_A+l)}\nonumber
    \\
    &
    \prod_{I\neq J}
    \frac{\prod_{l\leq -d_I+d_J}(\eta_I-\eta_J+l)}
    {\prod_{l\leq 0}(\eta_I-\eta_J+l)}\cdot f
\end{align}
where
\begin{align}\label{subeqn:I10I8f}
       f= &
    \prod_{I=1}^{N_2}
    \prod_{A=1}^{p}
    \frac{\prod_{l\leq 0}(\lambda_{A+N_3'}-\eta_{I}+l)}
    {\prod_{l\leq n^3_A}(\lambda_{A+N_3'}-\eta_{I}+l)}
    \prod_{I=N_3'+p+1}^{N_2}
    \prod_{A=N_3'+1}^{N_4}
    \frac{\prod_{l\leq 0}(\lambda_{A}-\lambda_I+l)}
    {\prod_{l\leq -n^3_{I-{N_3'}}}(\lambda_{A}-\lambda_I+l)}\nonumber
    \\
    &
    \prod_{B=1}^{N_2}
    \left(
    \prod_{F=N_3'+p+1}^{N_4}
    \frac{\prod_{l\leq 0}(\lambda_{N_1+B}-\lambda_F+l)}
    {\prod_{l\leq n^2_B}(\lambda_{N_1+B}-\lambda_F+l)}
    \prod_{A=1}^k
    \frac{\prod_{l\leq 0}(\lambda_{B+N_1}-\lambda_{N_3'+A+l})}
    {\prod_{l\leq n^2_B-n^3_A}(\lambda_{B+N_1}-\lambda_{N_3'+A+l})}
    \right)
\end{align}
and
\begin{equation}
\eta_I=
    \begin{cases}
        \lambda_I,\,&\text{ for } 1\leq I\leq N_3'+p\\
        \lambda_I+n^3_{I-N_3'},\,&\text{ for } N_3'+p+1\leq I\leq N_2.
    \end{cases}
\end{equation}
Notice that the formula \eqref{eqn:I10dualgrassmann} without $f$ is the quasimap small $I$-function of $Gr^\vee$ in the fundamental building block restricted to torus fixed points 
$\{1,\ldots,N_3'+p\}\sqcup \{p+1,\ldots,N_3-N_1\}\subset [N_4]\sqcup [N_3-N_1]$ where equivariant parameters of torus $(\mathbb C^*)^{N_4+N_3-N_1}$-action are $\{\lambda_{F}\}_{F=1}^{N_4}\cup \{\lambda_{N_3'+A}+n^3_A\}_{A=1}^{N_3-N_1}$ and those of $(\mathbb C^*)^{N_2}$-action are $\{\lambda_{N_1+B}+n^2_B\}_{B=1}^{N_2}$.

Similarly, we restrict the quasimap small $I$-function of $\mc X_8$ to the torus fixed point in \eqref{eqn:torusfixedpointsP8p10}. 
\begin{align}\label{eqn:I8restrP8}
   I^{\mc X_8,R}\big|_{P_8}= &\sum_{(\vec n^i)\in \mathrm{Eff}_T^8}(\mathrm{Irrel})\cdot 
    \prod_{I\neq J}^{N_3}
    \frac{\prod_{l\leq n^1_I-n^1_J}(\lambda_{N_3'+I}-\lambda_{N_3'+J}+l)}{\prod_{l\leq 0}(\lambda_{N_3'+I}-\lambda_{N_3'+J}+l)}
    \prod_{I=1}^{N_3}\prod_{F=1}^{N_4}
    \frac{\prod_{l\leq 0}(\lambda_{N_3'+I}-\lambda_F+l)}
    {\prod_{l\leq n^1_I}(\lambda_{N_3'+I}-\lambda_F+l)}\nonumber\\
    &
    \prod_{I=1}^{N_3}\left(
    \prod_{A=1}^{N_3-N_1}
    \frac{\prod_{l\leq 0}(\lambda_{N_3'+I}-\lambda_{N_3'+A}+l)}
    {\prod_{l\leq n^1_I-n^3_A}(\lambda_{N_3'+I}-\lambda_{N_3'+A}+l)}
    \prod_{B=1}^{N_2}
    \frac{\prod_{l\leq 0}(\lambda_{N_1+B}-\lambda_{N_3'+I}+l)}
    {\prod_{l\leq n^2_B-n^1_I}(\lambda_{N_1+B}-\lambda_{N_3'+I}+l)}
    \right)\prod_{i=1}^3q_i^{\abs{\vec n^i}}.
\end{align}
The \textit{Irrelevant} parts in \eqref{eqn:I8restrP8} and \eqref{eqn:I10restrP10} are the same. 
For the same $\vec n^3$ described in \eqref{eqn:n3}, we have 
\begin{align}\label{eqn:dInIX8}
    &d_I:=n^1_I-n^3_I\geq 0, \text{ for } I=1,\ldots,p,\nonumber\\
    &d_I:=n^1_I,\,\text{ for } I=p+1,\ldots,N_3.
\end{align}
Otherwise the first product in the second row is zero and hence the corresponding term vanishes. 
Replace $n^1_I$ by $d_I+n^3_I$ or $d^I$ by the relation \eqref{eqn:dInIX8}.  
For the fixed $\vec n^3$ as in \eqref{eqn:n3},
we sum over all $\vec n^1$ terms in $I^{\mc X_8,R}\big|_{P_8}$ by disregarding the \textit{Irrelevant} part.
By doing some combinatorics, we transform this summation to the following formula
\begin{align}\label{eqn:I8grassmann}
        \sum_{\vec d\in \mathbb Z^{N_3}_{\geq 0}}
        \prod_{I\neq J}
        \frac{\prod_{l\leq d_I-d_J}(\zeta_I-\zeta_J+l)}
        {\prod_{l\leq 0}(\zeta_I-\zeta_J+l)}
        \prod_{I=1}^{N_3}
        \frac{\prod_{B=1}^{N_2}\prod_{l=-d_I+1}^0(\lambda_{N_1+B}+n^2_B-\zeta_I+l)q_1^{\abs{\vec d}+\sum_{I=1}^{n^3_I}}q_3^{\abs{\vec n^3}}}
        {\prod_{l=1}^{d_I-1}\prod_{F=1}^{N_4}(\zeta_I-\lambda_F+l)\prod_{A=1}^{N_3-N_1}(\zeta_I-\lambda_{N_3'+A}-n^3_A+l)}\cdot f
\end{align}
with the same $f$ in \eqref{subeqn:I10I8f} for fixed $\vec n^2,\vec n^3$.
The $\zeta_I$ in the above formula are 
\begin{equation}
\zeta_I=
    \begin{cases}
        \lambda_{N_3'+I}+n^3_I,\,&\text{ for } 1\leq I\leq p,\\
        \lambda_{N_3'+I},\, &\text{ for } p+1\leq I\leq N_3.
    \end{cases}
\end{equation}

Compare $\op{Eff}_{T}^{10}$ in \eqref{eqn:effectiveclassesZ10} and $\op{Eff}_{T}^8$ in \eqref{eqn:effectivex8} and one can find that $(\vec n^2,\vec n^3)$ range in $\mathbb Z^{N_2}_{\geq 0}\times \mathbb Z^{N_3-N_1}$ and $\vec n^1\in \mathbb Z^{N_2}$ are related via the their shifting in \eqref{eqn:dIforz10} and \eqref{eqn:dInIX8}.

Notice that the formula above without $f$ is exactly the quasimap small $I$-function of the total space $S^{\oplus N_2}\rightarrow Gr(N_3,N_4+N_3-N_1)$ restricted to torus fixed point $\{N_3'+p+1,N_3'+p+2,\ldots,N_4\}\sqcup \{1,2,\ldots, p\}\subset [N_4]\sqcup [N_3-N_1]$, 
where the equivariant parameters of torus $(\mathbb C^*)^{N_4+N_3-N_1}$ are 
$\{\lambda_F\}_{F=1}^{N_4}
\cup \{\lambda_{N_3'+A}+n^3_A\}$, 
and the equivariant parameters of the torus $(\mathbb C^*)^{N_2}$ on the fiber bundles are 
$\{\lambda_{N_1+B}+n^2_B\}_{B=1}^{N_2}$.
Since $N_4+N_3-N_1>N_2+1$, we can get the relation of formulas in \eqref{eqn:I10dualgrassmann} and \eqref{eqn:I8grassmann} by Theorem \ref{thm:Haidong}. 
Hence we have obtained the relation between $I^{\mc Z_{10},R}\big|_{P_{10}}$ and $I^{\mc X_8,R}\big|_{P_8}$, and have proved the Proposition. 
\end{proof}

The equivariant quasimap small $I$-functions of $\mc Z_{10}$ and $\mc X_{11}$ are related in a similar way. Hence we have concluded the Theorem \ref{thm:2nd} item (8) by localization. 
\iffalse
\subsection{K\"ahler variables}\label{sec:finitekalervar}
\begin{cor}
The genus zero Gromov-Witten theory of the $D_3$-quiver in Figure \ref{fig:intrD4} is finite. 
\end{cor}
\begin{proof}
The quiver $\mathbf Q_9$ is equal to $\mathbf Q_0$, and the corresponding K\"ahler variables are also the same with those of $\mathbf Q_0$. 
Perform quiver mutations to any quiver in $\Omega$, the resulting K\"ahler variables lie in $\mc C$.
\end{proof}
We emphasize that the finiteness if neither general nor trivial. 
We give a quiver in Figure \ref{fig:example} as a counterexample. The mutation equivalent quiver is itself. However, there are infinitely many K\"ahler variables $\{(q_1,q_2),(q_1^{-1}, q_1^2q_2),(q_2^2q_1^{3}, q_2^{-1}q_1^{-2}),\ldots,\}$.
\begin{figure}[H]
    \centering
    \includegraphics[width=1.2in]{example.png}
    \caption{}
    \label{fig:example}
\end{figure}

\fi

\section*{Acknowledgment}
The authors are grateful to Prof. Yongbin Ruan for proposing such an exciting topic. The first author would like to thank Yaoxiong Wen and Hai Dong for many helpful discussions on quiver varieties. The second author would like to thank Prof. Aaron Pixton for helpful conversations, and Peng Zhao for giving his physicist's suggestions and insights. Most of the work was done while the second author was at the University of Michigan, and she is thankful for the wonderful environment on the campus and great assistance from the department. He is supported by NSFC grant 12422104, and Zhang by NSFC 12301080.

\appendix
\addcontentsline{toc}{section}{Appendices}
\section{Computation for semistable loci of quivers that are mutation equivalent to the $D_3$-quiver}\label{sec:Appsemistablelocus}
In this section, we will investigate the semistable loci of the quivers $\mathbf Q_4, \ldots,\mathbf Q_8$ and $\mathbf Q_{10}$  listed in Figure \ref{fig:D4-6mutations} $(a)$ to $(e)$ and in Figure \ref{fig:extrafig2}. 
The main tool we use here is the Hilbert-Munford criterion \cite{GIT:Mumford}.  

For each quiver $\mathbf Q_i$, we let the input data be $(V_i,G_i,\theta_i)$, where $\theta_i(g)=\prod_{i=1}^3\det(g_i)^{\sigma_i}$ is the character of the gauge group $G_i$. 
For an one-dimensional subgroup $g(\lambda)$, $\lambda\in \mathbb C$, we define $<\theta_i, g(\lambda)>$ to be the exponent of $\lambda$ in $\theta_i(g(\lambda))$.

We adopt notations in the Section \ref{quivermutation}, and use the letters $A_i$ to represent arrows as in Figure \ref{fig:D4-6mutations}.

We will find the semistable loci in the proposed phases in Table \ref{table:phases}.

\subsection{Quiver $\mathbf Q_4$}\label{sec:appQ4}
\subsubsection{Semistable locus}
\begin{lem}\label{ss locus for X4}
 For the quiver $\mathbf Q_4$, in the phase $\sigma_3>0, \,\sigma_1+\sigma_3<0, \,\sigma_2+\sigma_3<0$, we have
 \begin{equation}\label{eqn:Q4semistab}
     V^{ss}_{4,\theta_4}(G_4)=\{(A_1, A_2, A_3)\big|A_1,A_2,\,\begin{bmatrix}A_1&A_2\end{bmatrix} ,A_3A_1 ,A_3A_2 \quad \text{all non-degenerate}\}.
 \end{equation}
\end{lem}
\begin{proof}
   We first prove that $V^{ss}_{4,\theta_4}$ is contained in the set of the right hand side in \eqref{eqn:Q4semistab}.
   It is easy to find that a point $(A_1, A_2, A_3)$ is unstable if  $A_1$, $A_2$ or $\begin{bmatrix}A_1&A_2\end{bmatrix}$ is degenerate, since $N_i<N_3$ $N_3<N_1+N_2$ and $\sigma_i<0$ for $i=1, 2$, $\sigma_3>0$.
   In the following argument, we will assume $A_i$, $i=1, 2$ and $\begin{bmatrix}A_1&A_2\end{bmatrix}$ are non-degenerate.

   We claim that $A_3A_1$ and $A_3A_2$ are both non-degenerate. Otherwise, if $A_3A_1$ is degenerate, then in the $G_4$-orbit we can find a representative such that 
   \begin{itemize}
      \item  the first column of $A_3A_1$ is zero,
      \item and  $A_1=\begin{bmatrix} 1&\mathbf 0\\ \mathbf 0&*\end{bmatrix}$, $A_3=\begin{bmatrix} \mathbf 0&*\end{bmatrix}$.
   \end{itemize}
   We then choose an one-parameter subgroup $g(\lambda)$ of $G_4$  by letting $g_1=\mathrm{diag}\{\lambda, 1, \ldots, 1\}$, $g_2=\mathrm{Id}_{N_1}$, $g_3=\mathrm{diag}\{\lambda, 1, \ldots, 1\}$. 
   Then one can check that $\lim_{\lambda\rightarrow 0}g(\lambda)\cdot (A_1,A_2,A_3)$ exists and  $\theta(g)=\lambda^{\sigma_1+\sigma_3}=\lambda^{<0}$, which contradicts the stability of $(A_1,A_2,A_3)$. We similarly can prove that $A_3A_2$ is nondegenerate.

   On the other hand, suppose that a point $(A_1,A_2,A_3)$ belongs to the set of the right hand side of \eqref{eqn:Q4semistab}, we are going to prove that it is semistable. 
  Let $g(\lambda)\subset G_4$ be an arbitrary one-parameter subgroup with $g_1=\mathrm{diag}(\lambda^{a_1}, \lambda^{a_2} , \ldots, \lambda^{a_{N_2}})$, $g_1=\mathrm{diag}(\lambda^{b_1}, \lambda^{b_2} , \ldots, \lambda^{b_{N_1}})$, such that $g_3=\mathrm{diag}(\lambda^{c_1}, \cdots,\lambda^{c_{N_3}})$ such that $\lim_{\lambda\rightarrow 0}g(\lambda)\cdot (A_1,A_2,A_3)$ exists. 
  Since $A3A_1$ and $A_3A_2$ are nondegenerate, we have $a_i,b_j<0$ for $i=1,\ldots,N_2, j=1,\ldots,N_1$.
  Since $\begin{bmatrix}A_1&A_2\end{bmatrix}$ is non-degenerate, 
  for each $k\in \{1,\ldots,N_3\}$, there is a $i_k\in \{1,\ldots,N_2\}$ such that $c_k-a_{i_k}\geq 0$ or there is a $j_k\in \{1,\ldots,N_1\}$ such that $c_k-b_{j_k}\geq 0$.
  Without loss of generality, we assume that for $k=1,\ldots,l$, $c_k\geq a_{i_k}$, and for $k=l+1,\ldots,N_3$, $c_k\geq b_{j_k}$.
  Then 
  \begin{align}
      <\theta_4, g(\lambda)>=
      &\sigma_1(\sum_{i=1}^{N_2}a_i)+ \sigma_2(\sum_{j=1}^{N_1}b_j)+
      \sigma_3(\sum_{i=1}^{N_3}c_i) \nonumber\\
      &\geq \sigma_1(\sum_{i=1}^{N_2}a_i)+ \sigma_2(\sum_{j=1}^{N_1}b_j)+
      \sigma_3(\sum_{k=1}^la_{i_k}+ \sum_{k=l+1}^{N_3}b_{j_k})\nonumber\\
      &\geq (\sigma_1+\sigma_3)(\sum_{k=1}^la_{i_k})+
      (\sigma_2+\sigma_3)(\sum_{k=l+1}^{N_3}b_{j_k})\geq 0.
  \end{align}
\end{proof}
\subsubsection{Torus fixed points}
The follow lemma gives the $R$-fixed locus in $\mc X_4$.
\begin{lem}\label{fixed pt for X4}
  The $R$-fixed locus of $\mathcal{X}_4$ is parameterized by the following finite set
    \begin{align}
   \mathfrak F_4 =\{(\vec A_{[N_2]},\vec B_{[N_1]},\vec C_{[N_4-N_3]})\big| \,\vec C_{[N_4-N_3]}\subset \vec A_{[N_2]}\cap\vec B_{[N_1]},\, \vec A_{[N_2]},\vec B_{[N_1]} \subset [N_4] \}
\end{align}
An element $(\vec A_{[N_2},\vec B_{[N_1]},\vec C_{[N_4-N_3]})\in \mathfrak{F}_4$ represents a $G_4$-orbit of the following form $(A_1, A_2, A_3)$.

Let $\vec D_{[M]}:=\vec A_{[N_2]}\cup \vec B_{[N_1]}$, $\vec E_{[m]}:=(\vec A_{[N_2]}\cap \vec B_{[N_1]})\backslash \vec C_{[N_4-N_3]}$. Define a map $$d: \vec D_{[M]}=\{l_1<l_2<\cdots < l_{M}\}\rightarrow \{1,\ldots ,M\}$$  sending $l_k$ to $k$.

We let $A_3$ and $A_1$ be column reduced echelon forms with row numbers of pivots being $\vec D_{[M]}$ and $d(\vec A_{[N_2]})$ respectively. 
Denote by $\vec e_i$ a row vector with $i$-th component 1 and all others zero. 

To obtain $A_2$, we first let $A$ be a reduced column echelon form such that $d(\vec B_{[N_1]})$ are the sets of row numbers of pivots. Notice $N_3-|\vec A_{[N_2]}\cup \vec B_{[N_1]}|=|\vec A_{[N_2]}\cap \vec B_{[N_1]}|-(N_4-N_3)=m$, so the last $m$ rows of $A$ are zero. We then replace the last $m$ zero rows with $\vec e_k$'s with $k\in d(\vec E_{[m]})$.

\end{lem}
The proof of the above lemma is elementary and tedious, we omit it here. We will illustrate the idea of proof via the following example.
\begin{ex}
  Suppose $N_1=N_2=2$, $N_3=3$, $N_4=4$. We consider $(\vec A, \vec B, \vec C)=(\{12\}, \{13\}, \{1\})$, then $\vec D=\{123\}$, $\vec E=\emptyset$, the fixed points $(A_1, A_2, A_3)$ is
  \begin{align*}
  A_1=
  \begin{bmatrix}
  1&0\\ 
  0&1\\
  0&0
  \end{bmatrix}, \,
  A_2=
  \begin{bmatrix}
  1&0\\ 
  0&0\\
  0&1
  \end{bmatrix},\,
  A_3=
  \begin{bmatrix}
  1&0&0\\ 
  0&1&0\\
  0&0&1\\
  0&0&0
  \end{bmatrix}
\end{align*}
If $(\vec A, \vec B, \vec C)=(\{12\}, \{12\}, \{1\})$, then $\vec D=\{12\}$, $\vec E=\{1\}$, the fixed points $(A_1, A_2, A_3)$ is
  \begin{align*}
  A_1=\begin{bmatrix}
  1&0\\ 
  0&1\\
  0&0
  \end{bmatrix}, \,
  A_2=
  \begin{bmatrix}
  1&0\\ 
  0&1\\
  0&1
  \end{bmatrix},\,
  A_3=
  \begin{bmatrix}
  1&0&0\\ 
  0&1&0\\
  0&0&0\\
  0&0&0
  \end{bmatrix}
\end{align*}
Next we compute all the $R$-fixed locus in this case. 
Notice that a general $A_3$ should be $G_4$-equivalent to an $A_3$ which is a reduced column echelon form. If it is fixed by $R$, then there is at most one nonzero component in each column of $A_3$. 
Furthermore, from the proof of Lemma \ref{ss locus for X4}, we know there is at most $N_3-N_2=1$ zero column in $A_3$, 
so $A_3$ should be of following two types
\begin{align*}
 (1)\, 
 \begin{bmatrix}
 1&0&0\\ 
 0&1&0\\ 
 0&0&1\\ 
 0&0&0
 \end{bmatrix}
 \quad (2)\,
 \begin{bmatrix} 
 1&0&0\\ 
 0&1&0\\ 
 0&0&0\\ 
 0&0&0
 \end{bmatrix}
\end{align*}

For case (1), since $(A_1, A_2, A_3)$ is $R$-fixed, there is at most one nonzero component in each column of $A_1$ and $A_2$, and they will be $G_4$-equivalent to reduced column echelon forms. 
Furthermore, notice that $\begin{bmatrix}
A_1&A_2
\end{bmatrix}$ 
are non-degenerate. 
There are $C_{4}^3=4$ choices for $A_3$. 
For each $A_3$, there are $C_3^2\times 2=6$ choices for $(A_1, A_2)$. 
So there are 24 fixed points of type (1).

For case (2), since $A_3A_1$ and $A_3A_2$ are both non-degenerate, $A_1$ and $A_2$ are of the following form
\begin{align*}
 A_1=\begin{bmatrix}
 1&0\\ 
 0&1\\ 
 a&b
 \end{bmatrix}\quad A_2=
 \begin{bmatrix} 
 1&0\\ 
 0&1\\ 
 c&d\end{bmatrix}
\end{align*}
Again by $R$-fixed condition, there is at most one nonzero element among $a, b, c, d$. On the other hand, by non-degeneracy of $\begin{bmatrix}A_1&A_2\end{bmatrix}$, $a, b, c, d$ cannot all vanish. Finally, notice that the case $a=1$ and $c=1$ is $G_4$-equivalent, the same as $b$ and $d$. Hence, after fixing $A_3$, there are two choices of $(A_1, A_2)$. 
Then there are in total $C_4^2\times 2=12$ $R$-fixed points in case (2).

In conclusion, there are $24+12=36$ $R$-fixed points.
\end{ex}
\begin{cor}\label{cor:appZ3x4torusfixedpoints}
  There is a natural one-to-one correspondence between the fixed loci $\mathfrak F_3$ and $\mathfrak F_4$.
\end{cor}
\begin{proof}
  Since the fixed points set of $\mathcal{X}_3$ and $\mathcal{X}_4$ are both parameterized by the same set.
\end{proof}

\subsection{Quiver $\mathbf Q_5$}
\subsubsection{Semistable locus}
\begin{lem}\label{lem:Q5phase2nd}
    In the proposed phase
    \begin{align}\label{eqn:Q5phase2nd}
        \sigma_3<0,\,
        \sigma_1+\sigma_3>0,\,
        \sigma_1+\sigma_2+\sigma_3<0, 
    \end{align}
 the semistable locus is 
 \begin{align}\label{eqn:Q5sslocu2ndphase}
        V^{ss}_{5,\theta_5}=\{(A_1,A_2,A_3)\big| A_1,A_2 \begin{bmatrix}A_1\\A_3 \end{bmatrix}, A_1A_2,A_3A_2 \text{ all non-degenerate}\}
    \end{align}
\end{lem}
\begin{proof}
    By $\sigma_1>0,\sigma_2<0,\sigma_3<0$, it is easy to see that $(A_1,A_2,A_3)$ is semistable only if $A_1,A_2$ and $\begin{bmatrix} A_1\\A_3\end{bmatrix}$ are non-degenerate
    
    Furthermore, we must have $A_3A_2$ non-degenerate. 
    Otherwise, 
    $A_3A_2$ can be transformed to $\begin{bmatrix}\mathbf 0&*\end{bmatrix}$ by the action of $GL(N_1)$. Since $A_2$ is column full rank, without loss of generality, we assume that the first column of $A_2$ is a column vector $\vec e_1$ whose first component is 1 and all other components are zero. Then the first column of $A_3$ must be a zero column vector. Since $\begin{bmatrix} A_1\\A_3\end{bmatrix}$ is nondegenerate, the first column of $A_1$ must be nonzero which can be transformed to $\vec e_1$ via the action of $GL(N_3-N_1)$. 
    Then we can choose an one-parameter subgroup $g(\lambda)$ of $G$ such that 
    \begin{align}
        g_1(\lambda)=\mathrm{diag}(\lambda, 1,\ldots,1)\,\,
        g_2(\lambda)=\mathrm{diag}(\lambda,1,\ldots,1)\,\,
        g_3(\lambda)=\mathrm{diag}(\lambda,1,\ldots,1).
    \end{align}
    One can check that $\lim_{\lambda\rightarrow 0}g(\lambda)\cdot(A_1,A_2,A_3)$ exists and $\theta_5(g)=\lambda^{\sigma_1+\sigma_2+\sigma_3<0}$. Hence degeneracy of $A_3A_2$ makes the element unstable. Therefore, we must have $A_3A_2$ non-degenerate. 

    Similarly, one can check that the multiplication $A_1A_2$ must be non-degenerate mimicking the above paragraph. 

    Until now, we have proved that in Equation \eqref{eqn:Q5sslocu2ndphase} the left hand side is contained in the right hand side. In the remaining part, we will prove the inclution in the other direction. 
    Assume $(A_1,A_2,A_3)$ is semistable. Let $(g(\lambda))$ be an one-parameter subgroup such that $g(\lambda)\cdot (A_1,A_2,A_3)$ exists. Then via the gauge group action, we can assume that 
    \begin{align}
        g_1(\lambda)=\mathrm{diag}(\lambda^{a_1},\ldots,\lambda^{a_{N_3-N_2}}),\,
        g_2(\lambda)=\mathrm{diag}(\lambda^{b_1},\ldots,\lambda^{b_{N_1}}),\,
        g_3(\lambda)=\mathrm{diag}(\lambda^{c_1},\ldots,\lambda^{c_{N_3}})
    \end{align}
    We conclude the following relation among those $a_i,b_i,c_i$.
    We have $b_i<0$ for all $i$ by the non-degeneracy of $A_3A_2$,  
    $ \forall i=1,\ldots,N_3-N_2, \exists j_i, s.t. a_i>b_{j_i}$ by the non-degeneracy of $A_1A_2$, 
    $\forall i=1,\ldots,N_3-N_2$, $\exists \,k_i\in \{1,\ldots,N_3\}$, s.t. $a_i>c_{k_i}$ and for the remaining $j'\in \{1,\ldots,N_3\}\backslash \{k_i\}$, $c_{j'}<0$.
    Then 
    \begin{align}\label{eqn:Q5thetag}
        <\theta, g(\lambda)>&=\sigma_1(\sum_{i=1}^{N_3-N_2}a_i)+\sigma_2(\sum_{i=1}^{N_1} b_i)+\sigma_3(\sum_{i=1}^{N_3}c_i)\nonumber\\
        &\geq \sigma_1(\sum_{i=1}^{N_3-N_2}a_i)+\sigma_2(\sum_{i=1}^{N_1}b_i)+\sigma_3(\sum_{i=1}^{N_3-N_2}a_i)+\sigma_3(\sum_{j'}c_j')\nonumber\\
        &\geq(\sigma_1+\sigma_3)(\sum_{i=1}^{N_3-N_2}a_i)+\sigma_2(\sum_{i=1}^{N_1}b_i)
        \nonumber\\
       & \geq (\sigma_1+\sigma_3)(\sum_{i=1}^{N_3-N_2}b_{j_i})+\sigma_2(\sum_{i=1}^{N_1}b_i)
       \nonumber\\
       &\geq (\sigma_1+\sigma_2+\sigma_3)(\sum_{i=1}^{N_3-N_2}b_{j_i})+ \sigma_2(\sum_{j\neq j_i}^{N_1}b_j)\geq 0
    \end{align}
Therefore, each element $(A_1,A_2,A_3)$ in the set of the right hand side in Equation \eqref{eqn:Q5sslocu2ndphase} is semistable. 
\end{proof}
\subsubsection{Torus fixed locus}
\begin{lem}\label{fixed pt for X5}
  The $S$-fixed locus of $\mathcal{X}_5$ is a disjoint union of isolated fixed points. The isolated fixed points can be parameterized by the following set
    \begin{align}
   \mathfrak F_5 =\{(\vec A_{[N_1]},\vec B_{[N_2]},\vec C_{[N_4-N_3]})\big| \,\vec C_{[N_4-N_3]}\subset \vec A_{[N_1]}\cap\vec B_{[N_2]},\, \vec A_{[N_1]},\vec B_{[N_2]} \subset [N_4] \}
\end{align}
An element $(\vec A_{[N_1]},\vec B_{[N_2]},\vec C_{[N_4-N_3]})\in \mathfrak{F}_5$ represents a $_5$-orbit of the following form $(A_1, A_2, A_3)$. Here $A_2, A_3$ are constructed in the same way as Lemma \ref{fixed pt for X4}, and  $A_1$ is row reduced echelon forms with row numbers of pivots being $\{1, 2, \ldots, N_3-N_2\} \backslash d(\vec B_{[N_2]})$ , where $d$ is defined in Lemma \ref{fixed pt for X4}.
\end{lem}

\begin{ex}
  Suppose $N_1=N_2=2$, $N_3=3$, $N_4=4$. We consider $(\vec A, \vec B, \vec C)=(\{12\}, \{13\}, \{1\})$, then $\vec D=\{123\}$, $d(\vec B)=\{13\}$, the fixed point $(A_1, A_2, A_3)$ is
  \begin{align*}
  A_1=
  \begin{bmatrix}
  0&1&0
  \end{bmatrix}, 
  A_2=
  \begin{bmatrix}
  1&0\\ 
  0&1\\
  0&0
  \end{bmatrix}, 
  A_3=\begin{bmatrix}
  1&0&0\\ 
  0&1&0\\
  0&0&1\\
  0&0&0
  \end{bmatrix}
\end{align*}
If $(\vec A, \vec B, \vec C)=(\{12\}, \{12\}, \{1\})$, then $\vec D=\{12\}$, $d(\vec B)=\{12\}$, the fixed point $(A_1, A_2, A_3)$ is
  \begin{align*}
  A_1=
  \begin{bmatrix}
  0&0&1
  \end{bmatrix}, 
  A_2=\begin{bmatrix}
  1&0\\ 
  0&1\\
  0&1
  \end{bmatrix}, 
  A_3=
  \begin{bmatrix}
  1&0&0\\ 
  0&1&0\\
  0&0&0\\
  0&0&0
  \end{bmatrix}
\end{align*}

Next we compute all $R$-fixed points. In $G_5$-orbit $A_3$ can be a reduced column echelon form. 
If it is fixed by $R$, then there is at most one nonzero component in each column of $A_3$. 
Furthermore, from the proof of Lemma \ref{ss locus for X4}, we know there is at most $N_3-N_2=1$ zero column in $A_3$, so $A_3$ should be one of the following two types
\begin{align*}
 (1)\, 
 \begin{bmatrix}1&0&0\\ 0&1&0\\ 0&0&1\\ 0&0&0\end{bmatrix}\quad (2)\,\begin{bmatrix} 1&0&0\\ 0&1& 0\\ 0& 0& 0\\ 0&0&0\end{bmatrix}
\end{align*}

For case (1), there is at most one nonzero component in each column (row) of $A_2$ ($A_1$) in an element of $G_5$-orbit, and they will be $G_5$-equivalent to reduced column (row) reduced echelon forms. 
By the non-degeneracy of $A_3A_2$, one can find there are $C_3^2$ choices of $A_2$. 
Once we have fixed $A_2$, there are two choices of $A_1$ since $A_1A_2$ is non-degenerate. 
Since $A_3$ is non-degenerate,  there are $C_{4}^3=4$ choices for $A_3$ by varying the positions of pivots.
For each $A_3$, there are $C_3^2\times 2=6$ choices for $(A_1, A_2)$. Therefore, there are 24 $R$-fixed points in case (1).

For case (2), when we perform a $R$-action on the above canonical form $A_3$, the $g_3$-action will force that there is at most one non-vanishing component in each column (row) of $A_2^\prime$ ($A_1^\prime$), where $A_2^\prime$ ($A_1^\prime$) is the submatrix obtained by the first 2 rows (columns) of $A_2$ ($A_1$). Since $A_3A_2$ is non-degenerate, $A_2$ is of the form $\begin{bmatrix} 1, 0\\ 0, 1\\ c, d\end{bmatrix}$. Since $\begin{bmatrix}A_1\\A_3 \end{bmatrix}$ is non-degenerate, $A_2$ is of the form $\begin{bmatrix} a, b ,1\end{bmatrix}$, with $ab=0$. WLOG, suppose $a=0$, if $b\neq 0$, then $c=0$ by $R$-action but this will make $A_1A_2$ degenerate. So $a=b=0$. Again by $R$-action and the non-degeneracy of $A_1A_2$, there is exactly 1 non-vanishing element in $c, d$. So after fixing $A_3$, $A_1$ is determined, and there are 2 choices of $A_2$. Then there are totally $C_4^2\times 2=12$ fixed points in case (2).

In conclusion, there are $24+12=36$ fixed points.
\end{ex}

\subsection{Quiver $\mathbf Q_6$} 
Let $M_1=N_4-N_2,M_2=N_4-N_1,M_3=N_3,M_4=N_4$.
\begin{lem}
    In the proposed phase,
    \begin{align}\label{eqn:Q6phase2nd}
        \sigma_1+\sigma_3<0,\,
        \sigma_2+\sigma_3<0,\, \sigma_1+\sigma_2+\sigma_3>0,
    \end{align}
   the semistable locus is 
    \begin{align}\label{eqn:Q6semistable}
        V^{ss}_{6,\theta_6}=\{(A_1,A_2,A_3)\big| A_1,A_2 \begin{bmatrix}A_1\\A_2 \end{bmatrix}, \begin{bmatrix}A_2\\A_2 \end{bmatrix}, \begin{bmatrix}A_1\\A_3 \end{bmatrix} \text{ non-degenerate}\}.
    \end{align}
\end{lem}

\begin{proof}
  Firstly, it is easy to find $(A_1, A_2, A_3)$ is unstable if $A_1$, $A_2$ or $\begin{bmatrix} A_1\\A_2\\A_3\end{bmatrix}$ is degenerate, since $\sigma_1>0,\sigma_2>0,\sigma_3<0$. 
  In the following argument, we will assume that the above three matrices are all nondegenerate. 
  \iffalse
  Under this assumption, we can find an element $(A_1, A_2, A_3)$ in the $G_6$-orbit which satisfies the following conditions:
   \begin{itemize}
     \item  there is exactly one nonzero entry $1$ in every row of $\begin{bmatrix}A_1\\A_2\end{bmatrix}$,
     \item  there are at most two nonzero entries, which both equals to $1$, in every column of $\begin{bmatrix}A_1\\A_2\end{bmatrix}$.
   \end{itemize}
\fi

   We claim that if a point $(A_1,A_2,A_3)$ is semistable, then $\begin{bmatrix}A_1\\A_2 \end{bmatrix}$ must be  nondegenerate. 
   Otherwise, 
   at least one row vector of $A_1$ is the same with that of $A_2$, and we assume that the first row vector of $A_1$ is the same with the first row vector of $A_2$. Assume further that the first component of this vector is nonzero. Then, under $G_6$ action,  $$\begin{bmatrix}A_1\\A_2\end{bmatrix}=\begin{bmatrix}1&\mathbf 0\\ \mathbf 0&* \\ 1&\mathbf 0\\ \mathbf 0 &* \end{bmatrix}.$$
   Let $g(t)=(g_1(t),g_2(t),g_3(t))$, $t\in \mathbb C$ be an one-parameter subgroup of $G_6$ such that $g_1(t), g_2(t), g_3(t)$ are of the form $\mathrm{diag}(t^a, 1, \ldots, 1)$, $a<0$. One can find
   $\lim_{t\rightarrow 0}g(t)\cdot(A_1, A_2, A_3)$ exists and $\theta(g(t))=t^{a(\sigma_1+\sigma_2+\sigma_3)}=t^{<0}$. So $(A_1, A_2, A_3)$ is unstable.

  We claim that the augmented matrix $\begin{bmatrix}A_2\\A_3\end{bmatrix}$ is also non-degenerate. Otherwise,  via $G_6$-action, $\begin{bmatrix}A_1\\A_2\\A_3\end{bmatrix}$ can be transformed to the matrix whose first column is $(1, 0, 0, \ldots, 0)^T$. Let $g(t)$ be an one-parameter subgroup of $G_6$ such that $g_2=\mathrm{Id}$, 
  $g_1(t), g_3(t)$ are of the form $\mathrm{diag}(t^a, 1, \ldots, 1)$, $a>0$. 
  We have $\lim_{t\rightarrow 0}g(t)\cdot (A_1,A_2,A_3)$ exists and $\theta(g)=t^{a(\sigma_1+\sigma_3)}=t^{<0}$, which contradicts the condition that $(A_1, A_2, A_3)$ is semistable.
  
  One can prove that when a point $(A_1,A_2,A_3)$ is semistable, then the augmented matrix $\begin{bmatrix} A_1\\A_3\end{bmatrix}$ is non-degenerate by the same argument as above by using the condition that $\sigma_2+\sigma_3<0$.

  On the other hand, we are going to prove that points in the set of the right hand side of \eqref{eqn:Q6semistable} are semistable.  
  Let $(A_1,A_2,A_3)$ be such a point.
  Let $g(t)=(g_1(t),g_2(t),g_3(t))$ be an one-parameter subgroup of $G_6$ with $g_1=\mathrm{diag}(t^{a_1}, t^{a_2} , \ldots, t^{a_{M_1}})$, $g_1=\mathrm{diag}(t^{b_1}, t^{b_2} , \ldots, t^{b_{M_2}})$, $g_3=\mathrm{diag}(t^{c_1}, t^{c_2} , \ldots, t^{c_{M_3}})$, such that the limit
  \begin{equation}\label{eqn:Q6limit}
      \lim_{t\rightarrow 0}g(t)\cdot (A_1,A_2,A_3)
  \end{equation} exists.  
  Suppose $c_i>0$, for $1\leq i \leq k$, and $c_i\leq 0$, for $k+1\leq i \leq M_3$. 
  Then a quick result of this assumption is that the first $k$ columns of $A_3$ are zero. 
  Since $\begin{bmatrix}A_i\\A_3 \end{bmatrix}, i=1,2$ are non-degenerate, there exists distinct $l_i$, $1\leq i$, and distinct $m_j$, $j\leq k$ such that 
  \begin{equation*}
      (A_1)_{l_i, i}\neq 0,(A_2)_{m_j, j}\neq 0, 1\leq i, j\leq k.
  \end{equation*}
 To simplify notations, we assume $l_i=m_i=i$, $1\leq i\leq k$. 
 By non-degeneracy of $\begin{bmatrix}A_1\\A_2\end{bmatrix}$, we can find distinct $n_1, \ldots, n_{M_1-k}, n_{M_1-k+1},\ldots,n_{M_1+M_2-2k}$, such that $n_i>k$,  and
 \begin{equation}
    ( A_1)_{ k+i,n_i}\neq 0, (A_2)_{ k+j,n_{M_1-k+j}}\neq 0.\nonumber
 \end{equation}
 Again, we assume $n_i=k+i$, then
       \begin{align*}
           a_i\geq  c_i,\, \forall i;
           \quad     b_j\geq 
           \begin{cases}
               c_j, & {1\leq j\leq k;} \\
           c_{M_1+j-k}, & {j\geq k+1.}
           \end{cases}
       \end{align*}
Then 
      \begin{align*}
      <\theta_6, g(t)>&=(\sum_{i=1}^{M_1} a_i)\sigma_1+(\sum_{j=1}^{M_2} b_j)\sigma_2+(\sum_{l=1}^{M_3} c_l)\sigma_3\nonumber\\
      &
      \geq (\sum_{i=1}^{M_1} c_i)\sigma_1+(\sum_{j=1}^{k} c_{j}+\sum_{j=M_1+1}^{M_1+M_2}c_j)\sigma_2+(\sum_{l=1}^{M_3} c_l)\sigma_3\\
      &\geq (\sum_{i=1}^{k} c_i)(\sigma_1+\sigma_2+\sigma_3)+(\sum_{j=k+1}^{M_1} c_{j})(\sigma_1+\sigma_3)+(\sum_{j=k+1}^{M_1+M_2} c_{j})(\sigma_2+\sigma_3)\geq 0
      \end{align*}
    Hence, such a point must be semistable. 
\end{proof}
\subsection{Quiver $\mathbf Q_7$}
We adopt the notations as the previous subsection by letting $M_1=N_3-N_2,M_2=N_3-N_1,M_3=N_3,M_4=N_4$.
\begin{lem}
    In the proposed phase
    \begin{align}
        \sigma_1<0,\,\sigma_2<0,\,
        \sigma_1+\sigma_2+\sigma_3>0
    \end{align}
    the semistable locus is 
    \begin{align}
        V^{ss}_{7,\theta_7}=\{(A_1,A_2,A_3) \big| A_1,A_2,A_3 \text{ non-degenerate}\}.
    \end{align}
\end{lem}
\begin{proof}
    We first prove that if a point $(A_1,A_2,A_3)$ is semistable, then in the proposed phase, $A_1,A_2,A_3$ are all nondegenerate. 
    A quick result of the phase conditions $\sigma_1<0,\sigma_2<0,\sigma_3>0$ is that matrices $A_1,A_2$ and $\begin{bmatrix}A_1&A_2&A_3 \end{bmatrix}$ are non-degenerate. 
    Furthermore, we claim that the matrix $A_3$ is also nondegenerate under the condition $\sigma_1+\sigma_2+\sigma_3>0$. 
    Otherwise, the matrix $A_3$ is equivalent to $A_3=\begin{bmatrix}0\\ *\end{bmatrix}$ under $G_7$ action. The non-degeneracy of augmented matrix $\begin{bmatrix}A_1&A_2&A_3 \end{bmatrix}$ tells us that the first row of one of $A_1$ and $A_2$ is nonzero, which we assume to be $A_1$. Then matrix $A_1$ can be transformed to
    \begin{equation}\label{eqn:Q7ssA1}
        A_1=\begin{bmatrix} 1&\mathbf 0\\ *&* \end{bmatrix}
    \end{equation} by column operations without changing the formula of $A_3$. If the first row of $A_2$ is zero, we do nothing to $A_2$. However, if the first row of $A_2$ is nonzero, we can also transform $A_2$ to the formula in \eqref{eqn:Q7ssA1} by column operations, without changing the formulas $A_1$ and $A_3$. 
    Then we can choose an one-parameter subgroup $g(t)\subset G_7$ such that $g_1(t),g_2(t), g_3(t)$ are of the form $\mathrm{diag}=(t^{-1},1,\ldots,1)$. One can check that $\lim_{t\rightarrow 0}g(t)\cdot (A_1,A_2,A_3)$ exists and $\theta_7(g(t))=t^{-\sigma_1-\sigma_2-\sigma_3<0}$, which contradicts the condition that $(A_1,A_2,A_3)$ is semistable. 

    On the other hand, suppose that all $A_1,A_2,A_3$ are nondegenerate, we assert that such a point $(A_1,A_2,A_3)$ is semistable. Let $g(t)=(g_1(t),g_2(t),g_3(t))$ be an arbitrary one-parameter subgroup of $G_7$ with $g_1(t)=\mathrm{diag}(t^{a_1},\ldots, t^{a_{M_1}}), g_2=\mathrm{diag}(t^{b_1},\ldots, t^{b_{M_2}}), g_3(t)=\mathrm{diag}(t^{c_1},\ldots, t^{c_{M_3}})$, such that $\lim_{t\rightarrow }g(t)\cdot (A_1,A_2,A_3)$ exits. 
    
    The nondegeneracy of $A_3$ implies that $c_i\geq 0$. The nondegeneracy of $A_1$ and $A_2$ tells us that there are distinct integers $\{k_i\}_{i=1}^{M_1}\subset [M_3]$ such that $a_i\leq c_{k_i}$, and there are distinct integers $\{j_i\}_{i=1}^{M_2}\subset [M_3]$ such that $b_i\leq c_{j_i}$. 
    Then 
    \begin{align}
        <\theta_7,g(t)>&=
        \sigma_1(\sum_{i=1}^{M_1}a_i)+
        \sigma_2(\sum_{i=1}^{M_2}b_i)+
        \sigma_3(\sum_{i=1}^{M_3}c_i)
        \nonumber\\
        &\geq \sigma_1(\sum_{i=1}^{M_1}
        c_{k_i})
        +
        \sigma_2(\sum_{i=1}^{M_2}
        c_{j_i})+
        \sigma_3(\sum_{i=1}^{M_3}c_i)
        \nonumber\\
        &\geq 
        \sigma_1(\sum_{i=1}^{M_3}
        c_i)
        +
        \sigma_2(\sum_{i=1}^{M_3}
        c_i)+
        \sigma_3(\sum_{i=1}^{M_3}c_i)
        \nonumber\\
        &\geq (\sigma_1+\sigma_2+\sigma_3)(\sum_{i=1}^{M_3}c_i)\geq 0.
    \end{align} 
   Therefore,
   such point is semistable. 
 \end{proof}
 \subsubsection{Torus fixed points of $\mc X_6$ and $\mc X_7$}
 Let $R=(\mathbb C^*)^{M_4}$. The torus $R$ acts on both $\mc X_6$ and $\mc X_7$. We will find the torus fixed loci $\mc X_6^R$ and $\mc X_7^R$, and prove that there is a bijection between these two loci. 
\begin{lem}\label{fixed pt for X6}
  The $R$-fixed locus of $\mathcal{X}_6$ is a finite set of isolated fixed points. It can be parameterized by the following set
  \begin{equation}
    \mathfrak{F}_6=\big\{(\vec C_{[M_1]},\vec C_{[M_2]},\vec C_{[M_3]})\,\rvert \,\vec C_{[M_1]}\subset\vec C_{[M_3]}\subset [M_4],\,\vec C_{[M_2]}\subset\vec C_{[M_3]} \big\}\,.
\end{equation}
An element $(\vec C_{[M_1]},\vec C_{[M_2]},\vec C_{[M_3]})\in \mathfrak{F}_6$ represents a $G_6$-orbit of the following form $(A_1, A_2, A_3)$.
Define a map $$d_i: \vec C_{[M_i]}=\{l_1<l_2<\cdots < l_{M_i}\}\rightarrow \{1,\ldots ,M_i\}$$ that sends $l_k$ to $k$.
Denote by $\alpha_{i_1, i_2, \ldots, i_k}$ a column vector whose $i_1,i_2,\ldots,i_k$-th components are 1 and others are 0.
Let $\begin{bmatrix}A_1\\A_2\\A_3\end{bmatrix}$ be a matrix whose column vectors are listed as follows without ordering.
\begin{enumerate}[(1)]
  \item $\alpha_{d_1(i), M_1+M_2+i}, \quad i\in \vec C_{[M_1]}\backslash (\vec C_{[M_1]}\cap\vec C_{[M_2]})$,
  \item $\alpha_{M_1+d_2(j), M_1+M_2+j}, \quad j\in \vec C_{[M_2]}$,
  \item $\alpha_{d_1(k), M_1+d_2(k)}, \quad k\in \vec C_{[M_1]}\cap\vec C_{[M_2]}$,
  \item $\alpha_{M_1+M_2+l}$, $\quad l\in [M_4]\backslash \vec C_{[M_3]}$.
\end{enumerate}

\end{lem}
The proof of the above lemma is elementary, and we omit it here. We will illustrate the statement and the idea via the following example.
\begin{ex}
  We consider the case $M_1=M_2=1, M_3=3, M_4=4$. Then $(\{1\}, \{1\}, \{1, 2, 3\})\in \mathfrak{F}_6$ represents a point whose $G_6$-orbit admits an element of the following form,
\begin{align*}
   \begin{bmatrix}A_1\\A_2\\A_3 \end{bmatrix}=\begin{bmatrix}0& 1&0\\ 1&1& 0\\ 1&0& 0\\ 0& 0& 0\\ 0& 0& 0\\ 0& 0& 1\end{bmatrix}
\end{align*}
$(\{1\}, \{2\}, \{1, 2, 3\})$ represents a point of the following form,
\begin{align*}
  \begin{bmatrix}
  A_1\\A_2\\A_3
  \end{bmatrix}
  =
  \begin{bmatrix}
  1& 0& 0\\ 
  0& 1& 0\\ 
  1& 0& 0\\ 
  0& 1& 0\\ 
  0& 0& 0\\ 
  0& 0& 1
  \end{bmatrix}
\end{align*}
Notice that a general $A_3$ should be $G_6$-equivalent to a new $A_3$ which is a reduced column echelon form. If it is fixed by $R$, it has at most one nonzero component in each column.
Since augmented matrices $\begin{bmatrix}A_i\\A_3\end{bmatrix}$, $i= 1, 2$ are non-degenerate, $ \begin{bmatrix}A_1\\A_2\\A_3\end{bmatrix}$ should be one of the following two types
\begin{align*}
   (1)\, \begin{bmatrix}
         a_1& a_2& a_3\\  
         b_1& b_2& b_3\\ 
         1& 0& 0\\ 
         0&1& 0\\ 
         0&0&1\\ 
         0&0&0
         \end{bmatrix}
         \quad (2)\,
         \begin{bmatrix}0& 0& 1\\ 
         b_1& b_2& 1\\ 
         1& 0& 0\\ 
         0& 1& 0\\ 
         0& 0& 0\\ 
         0& 0& 0
         \end{bmatrix}
\end{align*}
Case $(1)$ has $C_4^3$ possibilities and case $(2)$ has $C_4^2$ possibilities by varying the positions of pivots of $A_3$.

For case $(1)$, 
since $\begin{bmatrix}A_1\\A_2\end{bmatrix}$ is non-degenerate, there exist $i$ and $j$, $i\neq j$, such that $a_i\neq 0$, $b_j\neq 0$. 
Since the point $(A_1, A_2, A_3)$ is fixed by $R$, the remaining $a_k=0$ for $k\neq i$, $b_k=0$ for $k\neq j$. 
Therefore, the case $(1)$ has in total  $C_4^3\times C_3^2=24$ possibilities.

Now we consider the case $(2)$, since $\begin{bmatrix}A_1\\A_2\end{bmatrix}$ is non-degenerate, and $\begin{bmatrix}A_2\\A_3\end{bmatrix}$ is fixed by $R$, there is exactly one $b_i$ non-vanishing. If $b_1$ is non-vanishing, then matrix is
  \begin{align*}
  \begin{bmatrix}
  A_1\\A_2\\A_3
  \end{bmatrix}
  =
  \begin{bmatrix}
  0& 0&1\\ 
  1&0& 1\\ 
  1&0&0\\ 
  0&1&0\\ 
  0&0&0\\ 
  0&0&0
  \end{bmatrix}
\end{align*}
 The case (2) has in total $C_4^2\times 2=12$ choices.

In conclusion, there are $24+12=36$ $R$-fixed points. This matches the quantity of fixed points in $\mc X_7$, which is $C_4^3\times C_3^1\times C_3^1=36$.
\end{ex}

\begin{lem}
    The $R$-fixed points in $\mc X_7$ can be parameterized by the following set
    \begin{align}
        \mathfrak F_7=
        \big\{(\vec C_{[M_1]},\vec C_{[M_2]},\vec C_{[M_3]})\,\rvert \,\vec C_{[M_1]}\subset\vec C_{[M_3]}\subset [M_4],\,\vec C_{[M_2]}\subset\vec C_{[M_3]} \big\}\,.
    \end{align}
    An element $(\vec C_{[M_1]},\vec C_{[M_2]},\vec C_{[M_3]})$ in $\mathfrak F_7$ represents a point $(A_1,A_2,A_3)$ in $\mc X_7^R$ of the following form. 
    The matrix $A_3$ is in row reduced echelon form with the column numbers of pivots being $\vec C_{[M_3]}$. 
    Matrices $A_1$ and $A_2$ are both reduced column echelon forms.     Relabel the rows of $A_1,A_2$ by numbers in $\vec C_{[M_3]}$. Row numbers of pivots of $A_1$ and $A_2$ are $\vec A_{[M_1]}$ and $\vec A_{[M_2]}$.
\end{lem}
\begin{proof}
    Since for any element  $(A_1,A_2,A_3)\in V^{ss}_{7,\theta_7}$, matrices $A_1,A_2,A_3$ are nondegenerate, in $G_7$-orbit, we can always find a representative such that all three matrices $A_i$, $i=1,2,3$ are in reduced row/column echelon forms. They are $R$-fixed, so their non-pivots entries all vanish. 
    The set $(\vec A_{[M_1]},\vec B_{[M_2]},\vec C_{[M_3]})$ in $\mathfrak F_7$ is taking the positions of pivots of matrices $A_1,A_2,A_3$ down.
    Then the lemma can be obtained.  
\end{proof}
\begin{cor}\label{cor:appx6x7torusfixedpoints}
  There is a canonical one-to-one correspondence between the fixed points set of $\mathcal{X}_6$ and $\mathcal{X}_7$.
\end{cor}
\begin{proof}
  The bijection is due to the fact that the two $R$-fixed loci $\mathcal{X}_6^R$ and $\mathcal{X}_7^R$ are both parameterized by the same sets $\{(\vec A_{[M_1]},\vec B_{[M_2]},\vec C_{[M_3]})\}$. 
\end{proof}

\subsection{Quiver $\mathbf Q_8$}
\begin{lem}
    In the phase 
    \begin{align}
        \sigma_1>0,\,\sigma_2<0,\,
        \sigma_2+\sigma_3>0,\,
    \end{align}
    the semistable locus is 
    \begin{align}
        V^{ss}_{8,\theta_8}=\{(A_1,A_2,A_3)\big| A_1,A_2,A_3\text{ non-degenerate}\}.
    \end{align}
\end{lem}
\begin{proof}
    The proof is easy and similar with the proof for $V^{ss}_{7,\theta_7}$. We omit it. 
\end{proof}
\begin{lem}
    The torus fixed locus $\mc X_8^R$ can be parameterized by the following set
    \begin{equation}
        \mathfrak F_8=\{(\vec A_{[N_2]},\vec B_{[N_3-N_1]},\vec C_{[N_3]})\big|\vec A_{[N_2]}\subset \vec C_{[N_3]} \subset [N_4],\, \vec B_{[N_3-N_1]}\subset \vec C_{[N_3]} \}.
    \end{equation}
\end{lem}
\begin{proof}
    The proof is easy. Since in semistable locus, $A_1,A_2,A_3$ are all nondegenerate, in $G_8$-orbit, we can find a representative such that the three matrices are reduced row/column echelon forms. Since the point is fixed by $R$-action, all entries except for the pivots vanish. Integers in the set $\vec C_{[N_3]}$  the column numbers of pivots of $A_3$, and those in the set $\vec A_{[N_2]}(\vec B_{[N_3-N_1]})$ are the column(row) numbers of pivots of $A_1(A_2)$ after we relabel columns (rows) of matrix $A_1(A_2)$.
\end{proof}
\subsection{Quiver $\mathbf Q_{10}$}
\subsubsection{Semistable locus}
Adopt the notations for the quiver $\mathbf Q_{10}$ in Section \ref{quivermutation}.
\begin{lem}\label{lem:Q10semapp}
  Choose phase of character $\theta_{10}$ as 
   \begin{align}
   \sigma_2>0,\,
    \sigma_3>0,\, 
   \sigma_1+\sigma_3<0.
    \end{align}
The semistable locus is 
    \begin{align}\label{eqn:appsem10}
        Z_{10}^{ss}(G_{10})=\{C=0,A_2A_1+B_2B_1=0\big|\, B_1,A_1,A_2 \text{ non-degenerate }\}.
    \end{align}
\end{lem}
\begin{proof}
    We first can easily find that when a point $(A_i,B_i,C)$ is semistable, $B_1,\begin{bmatrix}A_2&B_2 \end{bmatrix}, \begin{bmatrix}A_1\\B_1 \end{bmatrix}$ are all nondegenerate by the condition that $\sigma_1<0,\sigma_2>0,\sigma_3>0$. 
    The nondegeneracy of $\begin{bmatrix}A_2&B_2 \end{bmatrix}$ combining equations $CA_2=0, CB_2=0$ in $Z(dW)$ makes $C=0$. 

    We further claim that $A_1$ is nondegenerate. Otherwise, under the action of gauge group, the matrix $A_1$ can be transformed to a formula with one zero column which without loss of generality we assume to be the last column $A_1=\begin{bmatrix} *&\mathbf 0\end{bmatrix}$. Since $\begin{bmatrix}A_1\\B_1 \end{bmatrix}$ is nondegenerate, the last column of $B_1$ is nonzero, and via gauge group action, the matrix $B_1$ can be transformed to $B_1=\begin{bmatrix} \mathbf 0&1\\ *&\mathbf 0\end{bmatrix}$ without changing the format of $A_1$. 
    Since $A_2A_1+B_2B_1=0$ and the representatives of $A_1,B_1$ we choose in the $G$-orbit as above, the first column of $B_2$ must be a zero vector. 
    We choose a one parameter subgroup of $g(\lambda)\subset G_{10}$ as follows
    \begin{align}
        g_1(\lambda)=\mathrm{diag}(1,\ldots,1,\lambda),\,
        g_3(\lambda)=\mathrm{diag}(\lambda,1,\ldots,1),\,g_2=Id_{N_2}\,.
    \end{align}
    One can check that $\lim_{\lambda\rightarrow 0}g(\lambda)\cdot (A_1,B_1,A_2,B_2)$ exists, and $\theta(g(\lambda))=\lambda^{\sigma_1+\sigma_3<0}$, which contradicts the assumption that the point $(A_i,B_i,C)$ is semistable. 

    Furthermore, we claim that $A_2$ is non-degenerate. Since $A_2A_1+B_2B_1=0$, the non-degeneracy of $B_1$ confirms that columns of $B_2$ are linear combinations of those of $A_2$. Hence rank of $\begin{bmatrix}A_2&B_2\end{bmatrix}$ is equal to the rank of $A_2$, which is equal to $N_2$. Therefore, matrix $A_2$ is non-degenerate. 
    Until now, we have proved that $Z^{ss}_{10,\theta_{10}}$ is contained in the right hand side set in Equation \eqref{eqn:appsem10}. 

    On the other hand, let $(A_i,B_i,C)$ be a point in the set of right hand side of \eqref{eqn:appsem10}. Let $g(\lambda)=(g_1(\lambda),g_2(\lambda),g_3(\lambda))\subset G_{10}$ be an arbitrary subgroup such that $\lim_{\lambda\rightarrow 0}g(\lambda)\cdot (A_i,B_i,C)$ exists. 
    Suppose that 
    \begin{align}
        g_1(\lambda)=\mathrm{diag}(\lambda^{a_1},\ldots, \lambda^{a_{N_2}}),\,g_2(\lambda)=\mathrm{diag}(\lambda^{b_1},\ldots, \lambda^{b_{N_2}}),\,
        g_3(\lambda)=\mathrm{diag}(\lambda^{c_1},\ldots, \lambda^{c_{N_3-N_1}})\,.
    \end{align}
    Then we must have 
    \begin{align}
        &a_i\leq 0,\,b_i\geq 0, \forall i,\,\nonumber\\
        & \forall i\in \{1,\ldots,N_3-N_1\}, \exists j_i, \, \mathrm{s.t.} \,c_i\geq a_{j_i}.
    \end{align}
    Then 
    \begin{align}
        &<\theta_{10}, g(\lambda)>=\sigma_1(\sum_{i=1}^{N_2}a_i)+\sigma_2(\sum_{i=1}^{N_2}b_i)+\sigma_3(\sum_{i=1}^{N_3-N_1}c_i)\nonumber\\
        &\geq \sigma_1(\sum_{i=1}^{N_2}a_i)+\sigma_3(\sum_{i=1}^{N_3-N_1}a_{j_i})\geq 
      (\sigma_1+\sigma_3)(\sum_{i=1}^{N_3-N_1}a_{j_i})+ \sigma_1(\sum_{j\neq j_i}a_{j})\geq 0
    \end{align} 
    where in each step we have abandoned the terms that are obviously non-negative. 
\end{proof}
\begin{lem}\label{fixed point for Q10}
    The $R$-fixed locus $\mathfrak F_{10}$ can be described as follows 
    \begin{align}
        \{\vec A_{[N_2]},\vec B_{[N_2]}, \vec C_{[N_3-N_1]}\big |\, \vec C_{[N_3-N_1]}\subset \vec A_{[N_2]}\subset [N_4],\, \vec B_{[N_2]}\subset ([N_4]\backslash \vec A_{[N_2]})\cup \vec C_{[N_3-N_1]} \}
    \end{align}
    There is a bijection 
    \begin{equation}
        \iota_{10}: \mathfrak F_2\rightarrow \mathfrak F_{10}
    \end{equation}
    which sends $(\vec A_{[N_2]},\vec B_{[N_2]},\vec C_{[N_3']})$ to $(\vec A_{[N_2]}, \vec B_{[N_2]}, \vec A_{[N_2]}\backslash \vec C_{[N_3']})$. 
\end{lem}
\begin{proof}
     The inclusion $\vec C_{[N_3-N_1]}\subset \vec A_{[N_2]}\subset [N_4]$ is due to the non-degeneracy of matrices $A_1$ and $B_1$ which can be written as reduced column and reduced row echelon forms with non pivots vanishing, and then we use $\vec A_{[N_2]}$ and $\vec C_{[N_3-N_1]}$ to label numbers of rows and columns respectively. 
     
     The matrix $A_2$ itself is non-degenerate, so we can write $A_2$ as a reduced row echelon form and use $\vec B_{[N_2]}$ to represent such a matrix. The relation $A_2A_1+B_2B_1$ says that columns of $A_2$ in $\vec A_{[N_2]}\backslash \vec C_{[N_3-N_1]}$ must vanish. Hence we get the condition $\vec B_{[N_2]}\subset ([N_4]\backslash \vec A_{[N_2]})\cup \vec C_{[N_3-N_1]}$.

    The bijection of the map $\iota_{10}$ is easy and we omit it.
\end{proof}

 \bibliographystyle{amsalpha}
\bibliography{Seibergreferences2}

\end{document}